\newtheorem{theorem}{Theorem}[section]
\newtheorem{proposition}[theorem]{Proposition}
\newtheorem{lemma}[theorem]{Lemma}
\theoremstyle{definition}
\newtheorem{definition}[theorem]{Definition}
\newtheorem{assumption}[theorem]{Assumption}
\newtheorem{remark}[theorem]{Remark}
\newtheorem{example}[theorem]{Example}
\renewcommand{\Game}{\mathfrak{G}}
\newcommand{\wt }{\widetilde }
\newcommand{\wh }{\widehat }
\newcommand{\R}{\mathbb{R}}
\newcommand{\E}{\mathbb{E}}
\renewcommand{\P}{\mathbb{P}}
\newcommand{\FF}{\mathbb{F}}
\newcommand{\EP}{ {\mathbb E}_{\mathbb{P}}}
\newcommand{\Filt}{\mathcal{F}}
\newcommand{\Strat}{\mathcal{S}}
\newcommand{\Tstrat}{\mathcal{T}}
\newcommand{\STOP}{\mathcal{T}}
\newcommand{\esssup}{\operatornamewithlimits{ess\,sup}}
\newcommand{\essinf}{\operatornamewithlimits{ess\,inf}}
\let\inf\relax \DeclareMathOperator*\inf{\vphantom{p}inf}
\let\essinf\relax \DeclareMathOperator*\essinf{\vphantom{p}ess\,inf}
\newcommand\I{\mathds{1}}
\newcommand{\si}{\sigma}
\newcommand{\ep}{\epsilon}
\newcommand{\htau}{\wh \tau}
\newcommand{\hsi}{\wh \si}
\newcommand{\tsi}{\wt \si}
\newcommand{\DG}{\operatorname{DG}}   
\newcommand{\SDG}{\operatorname{SDG}} 
\newcommand{\GDG}{\operatorname{GDG}} 
\title{On Dynkin Games with Unordered Payoff Processes}
\author[1,2]{Ivan Guo}
\affil[1]{School of Mathematics, Clayton Campus, Monash University, VIC, 3800, Australia}
\affil[2]{Centre for Quantitative Finance and Investment Strategies\thanks{\textbf{Acknowledgements\quad} The Centre for Quantitative Finance and Investment Strategies has been supported by BNP Paribas.
I. Guo has been partially supported by the Australian Research Council (Grant DP170101227).
}, Monash University, Australia}
\begin{document}
\maketitle

\begin{abstract}
A {Dynkin game} is a zero-sum, stochastic stopping game between two players where either player can stop the game at any time for an observable payoff.
Typically the payoff process of the max-player is assumed to be smaller than the payoff process of the min-player, while the payoff process for simultaneous stopping is in between the two.
In this paper, we study general Dynkin games whose payoff processes are in arbitrary positions. In both discrete and continuous time settings, we provide necessary and sufficient conditions for the existence of pure strategy Nash equilibria and $\ep$-optimal stopping times in all possible subgames.

\bigskip
\noindent\textbf{Mathematics Subject Classification (2010):} 60G40,  	91A05,  	91A15

\noindent\textbf{Keywords:} Dynkin games, optimal stopping, Nash equilibrium
\end{abstract}

\section{Introduction}\label{sec2a0}

A \emph{Dynkin game}, first introduced by Dynkin \cite{dynkin1969game}, is a zero-sum, stochastic stopping game between two players where either player can stop the game at any time for an observable payoff. Much research has been done in this field as well as various related problems, for example, \cite{cvitanic1996backward, ekstrom2008optimal, hamadene2010continuous, laraki2005value, peskir2009optimal, rosenberg2001stopping, solan2001quitting, solan2003deterministic, touzi2002continuous}. One interesting application of Dynkin games is in two-person game contingent claims. The two-person game contingent claim is defined by Kifer \cite{kifer2000game}, who also proved the existence and uniqueness of its arbitrage price.
Further works, such as Hamad\`{e}ne and Zhang \cite{hamadene2010continuous}
and Kallsen and K\"uhn \cite{kallsen2004pricing}, studied various techniques in its pricing.

Typically the Dynkin game is associated with the payoff processes $X, Y$ and $Z$. In particular, the payoff is given by $X$ if the max-player stops first, $Y$ if the min-player stops first, and $Z$ if both players stop at the same time. \emph{Standard Dynkin games}, commonly studied in literature, refer to cases where the inequality $X\leq Z\leq Y$ is satisfied. This chapter will present some new results for \emph{general Dynkin games}, whose payoff processes are in arbitrary positions.

Sections \ref{sec3.11} and \ref{sec3.21} examines the standard Dynkin game in a discrete-time set-up. Well-known results addressing the existence and uniqueness of value as well as optimal stopping times are presented in Propositions \ref{pro:22} and \ref{propep05}. In Sections \ref{sec3.12} and \ref{sec3.22}, we establish some original results for the general Dynkin game in both discrete and continuous-time settings. In particular, the main results are Theorems \ref{coreo25} and \ref{thmez10}, which provide sufficient conditions for the existence and uniqueness of value and optimal stopping times. The same conditions are then shown to be necessary for the existence of value in all possible subgames.

The theory of two-person non-zero-sum Dynkin games is not included here. We instead refer the reader to
Hamad{\`e}ne and Zhang \cite{hamadene2010continuous}, Hamad{\`e}ne and Hassani \cite{hamadene2012multi},
   Ohtsubo \cite{ohtsubo1987nonzero,ohtsubo1991discrete},
  and Shmaya and Solan \cite{shmaya2004two} for some partial results in this area.

%

\section{Discrete-Time Dynkin Games}\label{sec2a}

We first present in Section \ref{sec3.11} the classic results on discrete-time zero-sum Dynkin games. Subsequently, in Section  \ref{sec3.12}, we attempt to provide a complete solution to the problem of existence of a Nash equilibrium for the general zero-sum Dynkin game. It should be stressed that we only deal
with stopping games with a finite time horizon; a large body of the existing literature is devoted to stopping games with infinite time
 horizon and thus also with possibly infinite optimal stopping times.

We will first examine zero-sum stopping games with the random payoff given by
\begin{gather} \label{zt44tt}
R(\tau , \si ) = \I_{\{ \tau < \si \}}\, X_{\tau }  +  \I_{\{ \si  < \tau \}}\, Y_{\si } +  \I_{\{ \si  = \tau \}}\, Z_{\si },
\end{gather}
where $X,Y$ and $Z$ are $\FF$-adapted and integrable processes.
 The random times $\tau$ and $\si$ are chosen from the class $\Tstrat_{[0,T]}$
 of $\FF$-stopping times and they are interpreted as the respective stopping strategies of the two players.

\begin{remark}\label{remec00}
By assumption, $\tau, \si \leq T$ and thus the values of $X_T$ and $Y_T$ are irrelevant in what follows.
Therefore, without loss of generality, we adopt the common convention that $X_T=Z_T=Y_T$.
\end{remark}

The following definition deals with the discrete-time  case, but its extension to the continuous-time
framework is immediate.

\begin{definition} \label{def:dynkin}
For any fixed date $t=0,1, \dots ,T$,  by the \emph{Dynkin game} $\DG_t(X,Y,Z)$ started at time $t$ and associated with the payoff $R (\tau , \si)$, we mean a zero-sum two-person stochastic game in which the goal of the {\it
max-player}, who controls a stopping time $\tau_t \in \STOP_{[t,T]}$, is to maximise the conditional expectation
\begin{gather}
\label{tt44t} \E_\P ( R(\tau_t ,\si_t )\,|\, \Filt_{t}),
\end{gather}
while the {\it min-player},  controlling a stopping time $\si_t \in \STOP_{[t,T]}$, wishes to minimise the conditional expectation
(\ref{tt44t}). Also denote by $\DG (X,Y,Z)$ the family of Dynkin games associated with $R(\tau,\si)$.
\end{definition}

 For any fixed $t$ and arbitrary stopping times $\tau_t $ and $\si_t $ from the class $\STOP_{[t,T]}$,
 formula (\ref{zt44tt}) yields
\begin{gather}  \label{tt44tt}
\E_\P ( R(\tau_t ,\si_t )\,|\, \Filt_{t})= \E_\P \Big(\sum_{u=t}^T \big( \I_{\{ u=\tau_t < \si_t \}}
\, X_u +  \I_{\{ u=\si_t< \tau_t \}}\, Y_u +  \I_{\{ u =\si_t = \tau_t \}}\, Z_u \big) \,\Big|\, \Filt_{t}\Big).
\end{gather}
We are interested in finding the {\it value process} $V^*$ of $\DG (X,Y,Z)$, that is, an $\FF$-adapted
process such that, for all $t=0,1,\dots ,T$,
\begin{gather*}
V^*_t = \essinf_{\si_t \in\STOP_{[t,T]}}\esssup_{\tau_t \in\STOP_{[t,T]}} \E_\P
(R(\tau_t,\si_t)\,|\,\Filt_{t})=\esssup_{\tau_t \in\STOP_{[t,T]}}\essinf_{\si_t \in\STOP_{[t,T]}} \E_\P
(R(\tau_t ,\si_t)\,|\,\Filt_{t}).
\end{gather*}
In addition, we search for a corresponding Nash (hence also optimal) equilibrium, that is, any pair $(\tau_t^*, \si_t^*)$ of
{\it optimal stopping times} satisfying
\begin{gather*}
V^*_t = \E_\P(R(\tau_t^*,\si_t^*)\,|\,\Filt_{t}).
\end{gather*}

\subsection{Standard Dynkin Game} \label{sec3.11}

We first present well-known results for the special class of two-person, zero-sum stopping games in the discrete-time
framework (see Neveu \cite{neveu1975discrete}).

\begin{definition}
By the {\it standard Dynkin game} $\SDG (X,Y,Z)$, we mean the stochastic stopping game associated with the payoff
$R$ given by \eqref{zt44tt} with processes $X,Y$ and $Z$ satisfying the following condition: $X \leq Z \leq Y$.
\end{definition}

The following definitions introduces candidates for the value process of the standard zero-sum Dynkin game and the optimal stopping times.

\begin{definition} \label{def:vp1}
The process $V$ is defined by setting $V_T = Z_T$ and, for any $t=0,1,\dots ,T-1$,
\begin{gather}
V_{t} = \min \Big\{ Y_t,\, \max  \big\{ X_{t}, \E_\P  ( V_{t+1}\,|\,\Filt_{t}) \big\} \Big\}= \max \Big\{ X_{t},\, \min \big\{ Y_t, \E_\P  ( V_{t+1}\,|\,\Filt_{t}) \big\} \Big\}.\label{lemec101}
\end{gather}
Furthermore, we set, for any fixed $t=0,1, \dots ,T$,
\begin{gather} \label{sig}
\tau^{*}_{t} := \min \big\{ u \in \{ t, t+1, \dots , T \} \,|\, V_u= X_u \big\},\\
\label{tau}
\si^{*}_{t} :=  \min \big\{ u \in \{ t, t+1, \dots , T \} \,|\, V_u = Y_u \big\}.
\end{gather}
\end{definition}

The assumption that $X\leq Z\leq Y$ immediately implies that the second equality in \eqref{lemec101} holds and, for $t=0,1,\ldots,T$,
\begin{gather}
X_t \leq V_t \leq Y_t ,\label{lemec102}
\end{gather}
 so that the process $V$ is bounded below $X$ and above by $Y$. The stopping times $\tau^*_t$ and $\si^*_t$ capture the first moment $V$ hits the lower and upper boundaries, respectively, starting from time $t$. Obviously, if $V$ is the value process then we also must have, for $t=0,1,\ldots,T$,
\begin{gather*} 
V_t=\E_\P \big(R(\tau^{*}_{t},\si^{*}_{t} )\,|\,  \Filt_{t}\big).
\end{gather*}

%
%

The following classic result shows that the process $V$ given by \eqref{lemec101} is indeed equal to the
value process $V^*$ of $\SDG (X,Y,Z)$. Recall that we work here under the standing
assumption that $X \leq Z \leq Y$; this condition will be relaxed in the foregoing subsection.

\begin{proposition} \label{pro:22}
(i) Let the process $V$ and the stopping times $\tau^{*}_{t},\si^{*}_{t}$ be given by Definition \ref{def:vp1}.
Then we have, for arbitrary stopping times $\tau_t , \si_t \in\STOP_{[t,T]}$,
\begin{gather} \label{eqe11}
\E_\P \big( R(\tau^{*}_{t},\si_t )\,|\, \Filt_{t}\big) \geq V_{t}
\geq \E_\P \big( R(\tau_t ,\si^{*}_{t})\,|\, \Filt_{t}\big),
\end{gather}
and thus also
\begin{gather*} 
\E_\P \big( R(\tau^{*}_{t},\si_t )\,|\,  \Filt_{t}\big) \geq \E_\P \big(
R(\tau^{*}_{t},\si^{*}_t )\,|\,  \Filt_{t}\big) \geq \E_\P \big(
R(\tau_t ,\si^{*}_{t})\,|\,  \Filt_{t}\big).
\end{gather*}
Hence $(\tau^{*}_{t},\si^{*}_t )$ is a Nash equilibrium of the standard Dynkin game $\SDG_t(X,Y,Z)$.

\noindent (ii) The process $V$ is the value process
of the game $\SDG (X,Y,Z)$, that is, for every $t=0,1,\dots ,T$,
\begin{align*} 
V_t &= \essinf_{\si_t \in\STOP_{[t,T]}} \,
\esssup_{\tau_t \in\STOP_{[t,T]}} \E_\P \big(R(\tau_t ,\si_t )\,|\,
\Filt_{t}\big)=\EP \big(R(\tau^{*}_t ,\si^{*}_t)\,|\, \Filt_{t}\big)
 \\ &=  \esssup_{\tau_t \in\STOP_{[t,T]}} \, \essinf_{\si_t \in\STOP_{[t,T]}}
\E_\P \big(R(\tau_t ,\si_t )\,|\, \Filt_{t}\big) = V_t^* , \nonumber
\end{align*}
and thus $\tau^{*}_t$ and $\si^{*}_t$ are optimal stopping times as of time $t$. In particular, $V^*_T = Z_T$
and for any $t=0,1,\dots ,T-1$,
\begin{gather}  \label{eq:crcr}
V_{t}^* = \min \Big \{ Y_t,\, \max \big\{ X_{t}, \E_\P  ( V_{t+1}^*\,|\,\Filt_{t}) \big\} \Big\}.
\end{gather}
\end{proposition}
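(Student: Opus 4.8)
The plan is to establish the two-sided estimate \eqref{eqe11} first, since part (i), part (ii) and \eqref{eq:crcr} all follow from it by routine minimax reasoning. The crux is a martingale classification of $V$ read off from the recursion \eqref{lemec101}, for which I would use both of its equivalent forms (the second equality being available precisely because $X\leq Z\leq Y$). From the max-min form, $V_u < Y_u$ forces $V_u = \max\{X_u,\E_\P(V_{u+1}|\Filt_u)\}\geq \E_\P(V_{u+1}|\Filt_u)$, a supermartingale step; from the min-max form, $V_u > X_u$ forces $V_u = \min\{Y_u,\E_\P(V_{u+1}|\Filt_u)\}\leq \E_\P(V_{u+1}|\Filt_u)$, a submartingale step. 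The payoff of this bookkeeping is that strictly before $\tau^*_t$ we have $V_u > X_u$, so $V_{\cdot\wedge\tau^*_t}$ is a submartingale on $[t,T]$, while strictly before $\sigma^*_t$ we have $V_u < Y_u$, so $V_{\cdot\wedge\sigma^*_t}$ is a supermartingale.

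For the left inequality in \eqref{eqe11}, fix any $\sigma_t\in\STOP_{[t,T]}$ and put $\rho=\tau^*_t\wedge\sigma_t$. Optional stopping for the submartingale $V_{\cdot\wedge\tau^*_t}$ (licit since the horizon is finite and $X,Y,Z$ are integrable) gives $\E_\P(V_\rho|\Filt_t)\geq V_t$. I would then compare $V_\rho$ with the realised payoff pathwise: on $\{\tau^*_t<\sigma_t\}$, $V_\rho = V_{\tau^*_t}=X_{\tau^*_t}=R(\tau^*_t,\sigma_t)$; on $\{\sigma_t<\tau^*_t\}$, $V_\rho=V_{\sigma_t}\leq Y_{\sigma_t}=R(\tau^*_t,\sigma_t)$ by \eqref{lemec102}; and on $\{\sigma_t=\tau^*_t\}$, $V_\rho=X_{\tau^*_t}\leq Z_{\tau^*_t}=R(\tau^*_t,\sigma_t)$ by $X\leq Z$. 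Hence $V_\rho\leq R(\tau^*_t,\sigma_t)$ in every case, and combining this with the submartingale bound yields $\E_\P(R(\tau^*_t,\sigma_t)|\Filt_t)\geq \E_\P(V_\rho|\Filt_t)\geq V_t$. The right inequality follows by the mirror argument: the supermartingale $V_{\cdot\wedge\sigma^*_t}$ gives $\E_\P(V_{\tau_t\wedge\sigma^*_t}|\Filt_t)\leq V_t$, while the same three-case comparison, now invoking $V\geq X$ and $Z\leq Y$, gives $V_{\tau_t\wedge\sigma^*_t}\geq R(\tau_t,\sigma^*_t)$.

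Part (i) is then immediate, since \eqref{eqe11} is exactly the saddle-point inequality; substituting $\tau_t=\tau^*_t$ and $\sigma_t=\sigma^*_t$ produces the displayed chain and identifies $(\tau^*_t,\sigma^*_t)$ as a Nash equilibrium. For part (ii), the left inequality of \eqref{eqe11} gives $\essinf_{\sigma_t}\esssup_{\tau_t}\E_\P(R|\Filt_t)\leq \esssup_{\tau_t}\E_\P(R(\tau_t,\sigma^*_t)|\Filt_t)\leq V_t$, and the right inequality gives $\esssup_{\tau_t}\essinf_{\sigma_t}\E_\P(R|\Filt_t)\geq V_t$. Since the sup-inf never exceeds the inf-sup, all four quantities are squeezed to $V_t=\E_\P(R(\tau^*_t,\sigma^*_t)|\Filt_t)$, and \eqref{eq:crcr} is merely \eqref{lemec101} restated for $V^*=V$.

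The main obstacle is not any single deep step but pinning down the martingale classification precisely and checking that the three-case pathwise comparison points the same way on each player's side. This is exactly where $X\leq Z\leq Y$ is indispensable: it is what makes $V_\rho\leq R$ for the max-player (via $X\leq Z$ and $V\leq Y$) and $V_\rho\geq R$ for the min-player (via $Z\leq Y$ and $V\geq X$). The finite horizon eliminates any convergence or integrability concern in the optional stopping step, so once the classification is in hand the remainder is a routine verification.
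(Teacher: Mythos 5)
Your proof is correct, but it takes a genuinely different route from the paper. The paper proves \eqref{eqe11} by direct backward induction on $t$: it splits into the case where the game is stopped at time $t-1$ (analysing the relative sizes of $X,Y,Z,V$ there) and the case where it continues (reducing to the induction hypothesis via the recursion \eqref{lemec101}). You instead modularise the argument into (a) a one-step sub/supermartingale classification of $V$ read off from the two equivalent forms of \eqref{lemec101} — so that $V_{\cdot\wedge\tau^*_t}$ is a submartingale and $V_{\cdot\wedge\si^*_t}$ a supermartingale on $[t,T]$ — followed by (b) discrete-time optional sampling at $\rho=\tau^*_t\wedge\si_t$ and a three-case pathwise comparison of $V_\rho$ against $R(\tau^*_t,\si_t)$, using $X\le V\le Y$ and $X\le Z\le Y$. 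Both steps are sound (the classification argument checks out: $V_u>X_u$ forces $V_u\le \E_\P(V_{u+1}\,|\,\Filt_u)$ and $V_u<Y_u$ forces the reverse), and the finite horizon makes optional sampling unproblematic. What your approach buys is that it makes explicit the martingale structure that the paper only records afterwards in Remark \ref{remec30}, and it is precisely the Snell-envelope-style verification that transfers to the continuous-time setting of Section \ref{sec2f}; what the paper's induction buys is that it is entirely self-contained and its structure is reused verbatim in the proof of Proposition \ref{propee10}, where the analogue of your classification would itself require the case analysis of Lemma \ref{lemee05}. One cosmetic slip: in your part (ii) the roles of the two inequalities of \eqref{eqe11} are swapped — the left inequality (with $\tau^*_t$) yields the lower bound on the sup-inf, and the right inequality (with $\si^*_t$) yields the upper bound on the inf-sup — but the squeeze argument itself is unaffected.
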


\begin{proof} (i) We apply the backward induction. The inequalities \eqref{eqe11} clearly hold for $t=T$. Assume that \eqref{eqe11} holds for some $t$, that is, for arbitrary  $\tau_{t},\si_{t}\in T_{[t,T]}$,
\begin{gather}\label{eqed09}
\E_\P \big( R(\tau^{*}_{t},\si_{t} )\,|\,  \Filt_{t}\big) \geq V_{t} \geq \E_\P \big(
R(\tau_{t} ,\si^{*}_{t})\,|\,  \Filt_{t}\big).
\end{gather}
We wish to prove that, for arbitrary  $\tau_{t-1},\si_{t-1}\in T_{[t-1,T]}$,
\begin{gather}\label{eqed10}
\E_\P \big( R(\tau^{*}_{t-1},\si_{t-1} )\,|\,  \Filt_{t-1}\big) \geq V_{t-1} \geq \E_\P \big(
R(\tau_{t-1} ,\si^{*}_{t-1})\,|\,  \Filt_{t-1}\big).
\end{gather}
There are essentially two cases, which are dealt with using different arguments.
\begin{itemize}
\item First, if the game is stopped at time $t-1$, then the result can be deduced by analysing the relative sizes of processes $X, Y, Z$ and $V$ at time $t$.
\item Second, if the game is not stopped at time $t-1$, then the analysis is reduced to the time $t$ case, which is covered by the induction hypothesis.
\end{itemize}
Note that since the game is symmetric between the two players, it suffices to establish the upper inequality of \eqref{eqed10}. The lower inequality can be shown using analogous arguments.

For any $\tau_{t-1},\si_{t-1}\in T_{[t-1,T]}$, let us write $\tilde \tau_{t-1}:=\tau_{t-1}\vee t$, $\tilde \si_{t-1}:=\si_{t-1}\vee t$, so that the stopping times $\tilde \tau_{t-1}$ and $\tilde \si_{t-1}$ belong to $\STOP_{[t,T]}$.

We proceed to the proof of the upper inequality in \eqref{eqed10}, beginning with the case where the game is stopped at time $t-1$. On the event $\{\tau^*_{t-1}=t-1\}$,
\begin{gather}\label{eqed11}
 \E_\P \big(R(\tau^{*}_{t-1},\si_{t-1} )\,|\,  \Filt_{t-1}\big)  =
  \I_{\{\si_{t-1} > t \}}X_{t-1} + \I_{\{ \si_{t-1} = t \}} Z_{t-1}
\geq X_{t-1} = V_{t-1}.
\end{gather}
On the event $\{\si_{t-1}=t-1 < \tau^*_{t-1}\}$, using \eqref{lemec102}, we obtain
\begin{align}\label{eqed13}
\E_\P \big(R(\tau^{*}_{t-1},\si_{t-1} )\,|\,  \Filt_{t-1}\big)  = Y_{t-1} \geq V_{t-1}.
\end{align}
Now for the case where the game is not stopped at time $t-1$. On the event $\{\tau^*_{t-1}\wedge \si_{t-1}\geq t\}$, it follows from Definition \ref{def:vp1} that $\tau^*_{t-1}=\tilde \tau^*_{t-1}=\tau^*_{t}$ and $V_{t-1}>X_{t-1}$, and thus
\begin{gather}\label{eqed12}
V_{t-1}=\min\big\{Y_{t-1},\E_\P \big( V_{t} \,|\,  \Filt_{t-1}\big)\big\}.
\end{gather}
Hence
\begin{align}
\E_\P \big(R(\tau^{*}_{t-1},\si_{t-1} )\,|\,  \Filt_{t-1}\big)  &= \E_\P \big( R(\tilde \tau^{*}_{t-1},\tilde \si_{t-1} )\,|\,  \Filt_{t-1}\big) \nonumber\\
&= \E_\P \big( \E_\P(R(\tau^{*}_{t},\tilde \si_{t-1} )\,|\,  \Filt_t\big) \,|\,  \Filt_{t-1}\big)\nonumber\\
&\geq \E_\P \big( V_{t} \,|\,  \Filt_{t-1}\big) \label{eqed15}\\
&\geq \min\big\{Y_{t-1},\E_\P \big( V_{t} \,|\,  \Filt_{t-1}\big)\big\} \nonumber\\
&=V_{t-1}.\label{eqed16}
\end{align}
Note that inequality \eqref{eqed15} follows from the induction hypothesis \eqref{eqed09}, while equality \eqref{eqed16} is an immediate consequence of \eqref{eqed12}. After combining \eqref{eqed11}, \eqref{eqed13} and \eqref{eqed16}, we obtain
the upper inequality of \eqref{eqed10}. As mentioned before, the lower inequality can be \eqref{eqed10} established by symmetry. The induction is then complete and thus statement (i) is proven.

\noindent (ii) We observe that, from \eqref{eqe11}, the pair $(\tau^*_t,\si^*_t)$ is a Nash equilibrium of $\SDG_t(X,Y,Z)$. Therefore, the process $V$ satisfies $V_t=\E_\P \big(R(\tau^{*}_{t},\si^{*}_{t} )\,|\,  \Filt_{t}\big)$ and thus it is the value process of the
standard zero-sum Dynkin game $\SDG (X,Y,Z)$. Equality \ref{eq:crcr} now follows easily.
\end{proof}

\begin{remark} \label{remec30}
It can be easily checked from Definition \ref{def:vp1} that the stopped process $V^{\tau^{*}_t \wedge \si^{*}_t}$ is an $\FF$-martingale on the time interval $[t,T]$.
\end{remark}

\subsection{General Dynkin Game}  \label{sec3.12}

We will now discuss possible generalisations of the standard zero-sum Dynkin game, while still maintaining the zero-sum property of the game. Specifically, we consider the zero-sum Dynkin game  associated with the random payoff $R$ given by
\begin{gather}\label{eqee00}
R(\tau , \si ) = \I_{\{ \tau < \si \}}\, X_{\tau }  +  \I_{\{ \si  < \tau \}}\,
Y_{\si }  +  \I_{\{ \si  = \tau \}}\, Z_{\si },
\end{gather}
where $X, Y$ and $Z$ are $\FF$-adapted, integrable processes. Note that we no longer impose any addition assumptions on their relative sizes (such as $X\leq Z\leq Y$), and thus we deal here with a \emph{general Dynkin game} $\GDG (X,Y,Z)$. As in Remark \ref{remec00}, without loss of generality, we may and do assume that $X_T=Y_T=Z_T$.

 However, since the processes $X, Y$ and $Z$ are now unrestricted, it is easy to construct a Dynkin game without a Nash equilibrium. Our aim in this subsection is to identify necessary and sufficient conditions for the following property:
\begin{gather}
\textit{For all $t=0,1,\ldots,T$,
the Dynkin game $\GDG_t(X,Y,Z)$ admits a Nash equilibrium.}\label{eqem005}
\end{gather}
The idea is to emulate the progression of the previous subsection, while replacing the inequalities $X\leq Z\leq Y$ by a general set of sufficient conditions. When analysing the existence of a Nash equilibrium, we will employ the backward induction argument, as we did in the proof of Proposition \ref{pro:22}. The key argument thus boils down to the thorough analysis of the embedded single period game,
which starts at time $t$ and is either stopped immediately or it is terminated on the next date.

To motivate the construction of the value process candidate in Definition \ref{def:vp2}, let us temporarily assume there exists a value process $V^*$ for the Dynkin game with the payoff process $R$ given by \eqref{eqee00}. Also, let $\tau^*_t, \si^*_t$ be any pair of optimal stopping times for the game starting at time $t$, so that
\begin{gather}\label{eqem011}
V^*_t = \E_\P(R(\tau_t^*,\si_t^*)\,|\,\Filt_{t}).
\end{gather}
Let us denote $P_t:=\E_\P \big( V^*_{t+1} \,|\,  \Filt_{t}\big)$. The next lemma deals with the single period embedded game.

\begin{lemma} \label{lemee05v}
The Nash equilibrium property of a pair $(\tau_t^*,\si_t^*)$ of stopping times is equivalent to the following conditions:
\begin{alignat*}{2}
Y_t\leq V^*_t&=Z_t\leq X_t& \quad\mbox{on}\quad &\{\tau^*_t=t,\si^*_t=t\},\\
P_t\leq V^*_t&=X_t\leq Z_t& \quad\mbox{on}\quad &\{\tau^*_t=t,\si^*_t>t\},\\
Z_t\leq V^*_t&=Y_t\leq P_t& \quad\mbox{on}\quad &\{\tau^*_t>t,\si^*_t=t\},\\
X_t\leq V^*_t&=P_t\leq Y_t& \quad\mbox{on}\quad &\{\tau^*_t>t,\si^*_t>t\}.
\end{alignat*}
\end{lemma}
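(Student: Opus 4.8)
The plan is to recognise the game started at time $t$ as a single embedded one-period game in which each player's only decision is whether to stop immediately at $t$ or to continue past $t$, the continuation being valued by the subgame started at $t+1$. Since $V^*$ is assumed to be the value process, the value of that subgame is $V^*_{t+1}$, so whenever both players continue the conditional payoff is $\E_\P(V^*_{t+1}\mid\Filt_t)=P_t$. Enumerating the four combinations of immediate decisions yields the $2\times 2$ payoff array with entries $Z_t$ (both stop), $X_t$ (max stops, min continues), $Y_t$ (max continues, min stops) and $P_t$ (both continue). The four displayed conditions are then exactly the four pure-strategy saddle-point conditions for this array, one for each of the stopping regions $\{\tau^*_t=t,\si^*_t=t\}$, $\{\tau^*_t=t,\si^*_t>t\}$, $\{\tau^*_t>t,\si^*_t=t\}$ and $\{\tau^*_t>t,\si^*_t>t\}$, which partition $\Omega$ into $\Filt_t$-measurable events.

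First I would fix one of the four events $A\in\Filt_t$ and read off the equilibrium payoff on it. Because the relevant payoff value is $\Filt_t$-measurable on $A$, multiplying $V^*_t=\E_\P(R(\tau^*_t,\si^*_t)\mid\Filt_t)$ by $\I_A$ yields the central equality in each line (for instance $V^*_t=X_t$ on $\{\tau^*_t=t,\si^*_t>t\}$); on the region where both continue this step instead uses that the equilibrium continuation achieves the subgame value, so that $V^*_t=P_t$ there. For the two inequalities on each event I would construct one-shot deviations that agree with the equilibrium strategy off $A$ and alter only the time-$t$ decision on $A$, glued along the $\Filt_t$-measurable set $A$ so as to remain admissible stopping times. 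Multiplying the Nash inequality $\E_\P(R(\tau^*_t,\si_t)\mid\Filt_t)\geq V^*_t\geq\E_\P(R(\tau_t,\si^*_t)\mid\Filt_t)$ by $\I_A$ and using $\Filt_t$-measurability localises each deviation to the pointwise inequality recorded in the lemma. By the symmetry of the game between the two players I would treat only the max-player's deviations explicitly and obtain the min-player's by the analogous argument.

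The genuinely delicate deviations are those in which both the non-deviating and the deviating player continue, since the payoff is then governed by the whole subgame rather than a single $\Filt_t$-measurable variable; the main obstacle is to value this continuation correctly by $P_t$. For the direction ``Nash $\Rightarrow$ conditions'' I would use that against a fixed opponent strategy the best continuation response dominates the subgame value, $\esssup_\tau\E_\P(R(\tau,\si^*_t)\mid\Filt_{t+1})\geq V^*_{t+1}$, and select an $\ep$-optimal continuation to bound the deviation payoff below by $P_t-\ep$, so that the Nash inequality forces $P_t\leq X_t$ (and the analogues on the other regions) as $\ep\downarrow 0$. For the converse I would instead invoke the optimality (subgame consistency) of the pair $(\tau^*_t,\si^*_t)$: because $\si^*_t$ and $\tau^*_t$ are value-achieving in the subgame, one has $\esssup_\tau\E_\P(R(\tau,\si^*_t)\mid\Filt_{t+1})=V^*_{t+1}$, so the continuation of any max-deviation against $\si^*_t$ cannot beat $P_t$; combined with the four inequalities this yields the two Nash inequalities after summing the $\I_A$-localised estimates over the partition. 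The remaining bookkeeping, namely the gluing of stopping times along $\Filt_t$-sets and the passage between conditional-expectation inequalities and their pointwise localisations, is routine but must be carried out carefully.
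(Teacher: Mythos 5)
Your route is the same as the paper's: treat time $t$ as an embedded one\mbox{-}period game with payoff array $Z_t,X_t,Y_t,P_t$, localise the identity $V^*_t=\E_\P(R(\tau^*_t,\si^*_t)\,|\,\Filt_t)$ to the four $\Filt_t$-measurable events to get the central equalities, and derive the flanking inequalities from one-shot deviations glued along those events. You are in fact more careful than the paper, whose entire proof is the list of four equalities followed by ``the stated conditions now follow easily'': you correctly isolate the only delicate step, namely that on $\{\tau^*_t>t,\,\si^*_t>t\}$ the equality $V^*_t=P_t$ does not follow from merely writing out $R$, and your two-sided argument via $\ep$-optimal continuation responses (using $\esssup_{\tau}\E_\P(R(\tau,\si^*_t)\,|\,\Filt_{t+1})\geq V^*_{t+1}$, its min-player analogue, and a directedness selection to produce a single $\ep$-optimal stopping time) is the right way to close it. So your forward implication (Nash equilibrium implies the four conditions) is complete; note this is the only direction the paper itself really establishes, and the only one used later (Lemma \ref{lemee05n}).

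The gap is in your converse. You ``invoke the optimality (subgame consistency) of the pair'', i.e.\ $\esssup_{\tau}\E_\P(R(\tau,\si^*_t)\,|\,\Filt_{t+1})=V^*_{t+1}$, but this is implied neither by the four displayed conditions nor by \eqref{eqem011}. Concretely, take $T=2$ with deterministic payoffs $X_0=X_1=0$, $Y_0=Y_1=2$, $Z_0=Z_1=1$, $X_2=Y_2=Z_2=1$ (a standard game, so $V^*$ exists), and take $\tau^*_0=\si^*_0=1$. Then $V^*_1=V^*_0=P_0=1$, the four conditions hold at $t=0$ (only the fourth is non-vacuous), and the payoff $R(\tau^*_0,\si^*_0)=Z_1=1$ equals the value; yet the pair is not a Nash equilibrium, since the max-player gains by deviating to $\tau=2$, receiving $Y_1=2$, and correspondingly $\esssup_{\tau}\E_\P(R(\tau,\si^*_0)\,|\,\Filt_1)=2\neq 1=V^*_1$. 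So the converse, read as a statement about the full game on $[t,T]$ for an arbitrary pair of stopping times, is false, and no argument can repair it without an extra hypothesis. The equivalence does hold --- and your own first paragraph essentially proves it --- if the ``Nash equilibrium property'' is read at the level of the embedded one-period game in which continuation is by definition valued at $P_t$ (this is plainly the paper's intent, given the preceding discussion of ``the embedded single period game''), or alternatively if value-achievement of $(\tau^*_t,\si^*_t)$ in the subgame from $t+1$ is stated as an explicit additional hypothesis rather than invoked as if it were available.
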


\begin{proof}
We note that, when written out in full according to definition \eqref{eqee00} of $R$, there are four cases
to examine:
\begin{alignat*}{2}
V^*_t&=Z_t& \quad\mbox{on}\quad &\{\tau^*_t=t,\si^*_t=t\},\\
V^*_t&=X_t& \quad\mbox{on}\quad &\{\tau^*_t=t,\si^*_t>t\},\\
V^*_t&=Y_t& \quad\mbox{on}\quad &\{\tau^*_t>t,\si^*_t=t\},\\
V^*_t&=P_t& \quad\mbox{on}\quad &\{\tau^*_t>t,\si^*_t>t\}.
\end{alignat*}
The stated conditions now follow easily from the definition of the Nash equilibrium.
\end{proof}

Let us write $L_t:=Z_t \wedge X_t$ and $U_t:=Y_t\vee Z_t$, so that $L_t \leq Z_t \leq U_t$
for $t=0,1,\ldots,T$.

\begin{lemma} \label{lemee05n}
Assume that $V^*$ is the value process for $\GDG (X,Y,Z)$ and $\tau^*_t, \si^*_t$ are optimal stopping times for $\GDG_t(X,Y,Z)$. Then: \hfill \break (i) $L_t\leq V^*_t \leq U_t$;
\hfill \break (ii)  $V^*_t=L_t$ on $\{\tau^*_t=t\}$ and $V^*_t=U_t$ on $\{\si^*_t=t\}$;
\hfill \break (iii)  $L_t\leq\E_\P \big( V^*_{t+1} \,|\,  \Filt_{t}\big)\leq U_t$  on the event $\{\tau^*_t \wedge \si^*_t>t\}$.
\end{lemma}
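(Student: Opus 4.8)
The plan is to derive all three claims directly from Lemma \ref{lemee05v} by a case analysis over the four disjoint events
\[
\{\tau^*_t=t,\si^*_t=t\},\quad \{\tau^*_t=t,\si^*_t>t\},\quad \{\tau^*_t>t,\si^*_t=t\},\quad \{\tau^*_t>t,\si^*_t>t\},
\]
which together exhaust $\Omega$. On each of these events Lemma \ref{lemee05v} pins down the value of $V^*_t$ (as one of $Z_t$, $X_t$, $Y_t$, $P_t$) together with a chain of inequalities among $X_t,Y_t,Z_t,P_t$. The whole argument then amounts to reading off how these chains interact with the definitions $L_t=Z_t\wedge X_t$ and $U_t=Y_t\vee Z_t$.

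I would establish (ii) first, as it is the most immediate reading. The event $\{\tau^*_t=t\}$ is the union of the first two events above. On $\{\tau^*_t=t,\si^*_t=t\}$ the chain $Y_t\leq V^*_t=Z_t\leq X_t$ forces $Z_t\leq X_t$, hence $L_t=Z_t=V^*_t$; on $\{\tau^*_t=t,\si^*_t>t\}$ the chain $P_t\leq V^*_t=X_t\leq Z_t$ forces $X_t\leq Z_t$, hence $L_t=X_t=V^*_t$. Therefore $V^*_t=L_t$ throughout $\{\tau^*_t=t\}$. The identity $V^*_t=U_t$ on $\{\si^*_t=t\}$ is entirely symmetric, using the first and third events, since in each case the relevant chain forces $Z_t\leq Y_t$ or $Y_t\leq Z_t$ and thereby collapses the maximum defining $U_t$ onto the attained value of $V^*_t$.

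Claim (iii) is then immediate: on $\{\tau^*_t\wedge\si^*_t>t\}=\{\tau^*_t>t,\si^*_t>t\}$ the last line of Lemma \ref{lemee05v} gives $X_t\leq P_t\leq Y_t$, and combining this with the trivial bounds $L_t\leq X_t$ and $Y_t\leq U_t$ yields $L_t\leq P_t\leq U_t$. Finally, (i) follows by verifying $L_t\leq V^*_t\leq U_t$ on each of the four events: on the first event $V^*_t=Z_t$ lies between $L_t$ and $U_t$ by definition; on the remaining three events the bounds are precisely the sandwiching inequalities already extracted in (ii) and (iii) (for instance $V^*_t=X_t=L_t$ and $V^*_t=X_t\leq Z_t\leq U_t$ on the second event). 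Since the four events cover $\Omega$, the bound holds everywhere.

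I do not expect a serious obstacle here: once Lemma \ref{lemee05v} is in hand the argument is pure bookkeeping. The only point requiring a little care is to notice that, in each of the non-simultaneous cases, the inequality supplied by the Nash-equilibrium characterisation is exactly what is needed to decide which argument attains the minimum in $L_t$ or the maximum in $U_t$, so that $L_t$ (respectively $U_t$) collapses onto the attained value of $V^*_t$. Keeping the four cases organised, and ensuring that part (i) aggregates all of them, is essentially the whole of the work.
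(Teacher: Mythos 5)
Your proposal is correct and follows the same route as the paper: the paper's proof also deduces all three parts directly from Lemma \ref{lemee05v}, merely asserting that they follow "easily," whereas you spell out the four-event case analysis explicitly. The bookkeeping you supply (using each chain of inequalities to collapse $L_t=Z_t\wedge X_t$ or $U_t=Y_t\vee Z_t$ onto the attained value) is exactly the intended argument.
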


\begin{proof}
 From Lemma \ref{lemee05v}, we deduce easily that $V^*_t$ is always bounded below by $L_t:=Z_t \wedge X_t$ and from above by $U_t:=Y_t\vee Z_t$, so that part (i) is valid.
 This makes sense intuitively since $X_t$ and $Z_t$  ($-Y_t$ and $-Z_t$, resp.) are the possible payoffs of the max-player
 (the min-player, resp.) if he stops at time $t$. Parts (ii) and (iii) also follow easily from Lemma \ref{lemee05v}.
\end{proof}

We note that these behaviours of $L, U, V^*, \tau^*_t, \si^*_t$ are reminiscent of Definition \ref{def:vp1} if processes $X$ and $Y$ are replaced by $L$ and $U$, respectively. This observation furnishes a strong motivation for the following definition.

\begin{definition} \label{def:vp2}
The process $V$ is defined by setting $V_T = Z_T$ and, for any $t=0,1,\dots ,T-1$,
\begin{gather} \label{eq:cb3}
V_{t} := \min \Big \{ U_t ,\, \max \big\{ L_t, \E_\P  ( V_{t+1}\,|\,\Filt_{t}) \big\} \Big\}= \max \Big \{ L_t ,\, \min \big\{ U_t, \E_\P  ( V_{t+1}\,|\,\Filt_{t}) \big\} \Big\},
\end{gather}
where $L:=X \wedge Z$ and $U:= Y\vee Z$. For any fixed $t=0,1,\ldots,T$, the stopping times $\tau^*$
and $\sigma^* $ from $\in\Tstrat_{[t,T]}$ are given by
\begin{gather}
\label{eqee131} \tau^{*}_{t} := \min \big\{ u \in \{ t, t+1, \dots , T \} \,|\, V_u = L_u \big\},\\
\label{eqee132} \si^{*}_{t} :=  \min \big\{ u \in \{ t, t+1, \dots , T \} \,|\, V_u = U_u \big\}.
\end{gather}
\end{definition}

In the remainder of this section, the process $V$ and stopping times $\tau^*_t, \si^*_t$ are as specified by Definition \ref{def:vp2}.
To justify Definition \ref{def:vp2}, we will show in Lemma \ref{lemeo12} that the process $V$ given by \eqref{eq:cb3} is in fact the unique candidate for the value process of the general zero-sum Dynkin game $\GDG (X,Y,Z)$. Of course,
the existence of the value process for $\GDG (X,Y,Z)$ is not yet ensured and in fact some additional
conditions are needed to achieve this goal (see Assumption \ref{assen40}).

Since $L\leq Z\leq U$, it is clear that the second equality in \eqref{eq:cb3} holds and, for $t=0,1,\ldots,T$,
\begin{gather} \label{eqen21}
L_t\leq V_t\leq U_t.
\end{gather}
Let the modified payoff $\wt R$ be given by the following expression
\begin{gather}\label{eqen22}
\wt R(\tau , \si ) := \I_{\{ \tau < \si \}}\, L_{\tau }  +  \I_{\{ \si  < \tau \}}\,
U_{\si }  +  \I_{\{ \si  = \tau \}}\, Z_{\si }.
\end{gather}
Then the analysis of the previous section shows that
\begin{gather}\label{eqen23}
V_t=\E_\P \big(\wt R(\tau^{*}_{t},\si^{*}_{t} )\,|\,  \Filt_{t}\big),
\end{gather}
and Proposition \ref{pro:22} implies that $(\tau^{*}_{t},\si^{*}_{t} )$ is a Nash equilibrium of the standard zero-sum Dynkin game $\SDG_t(L,U,Z)$ associated with the payoff process $\wt R$.
Obviously, this does not mean that they also provide solution to the general zero-sum Dynkin
game $\GDG (X,Y,Z)$ with the payoff process $R$. Nevertheless, the following lemma shows that $V$ is the appropriate candidate of the value process for $\GDG (X,Y,Z)$.

\begin{lemma}\label{lemeo12} For $t=0,1,\ldots,T$, the following properties are valid:
\hfill \break (i)  For any fixed $\tau_t,\si_t\in\Tstrat_{[t,T]}$, there exist $\htau_t, \hsi_t \in\Tstrat_{[t,T]}$ such that
\begin{gather}\label{eqeo121}
R(\htau_t , \si_t ) \geq \wt R(\tau_t , \si_t ) \geq R(\tau_t , \hsi_t ).
\end{gather}
 (ii)  The variable $V_t$ lies between the minimax and the maximin values of $\GDG_t(X,Y,Z)$ so that
\begin{gather}\label{eqeo122}
\essinf_{\si_t \in\STOP_{[t,T]}} \, \esssup_{\tau_t \in\STOP_{[t,T]}}
\E_\P \big(R(\tau_t , \si_t )\,|\,  \Filt_{t}\big) \geq V_t \geq
\esssup_{\tau_t \in\STOP_{[t,T]}} \, \essinf_{\si_t \in\STOP_{[t,T]}}
\E_\P \big(R(\tau_t , \si_t )\,|\,  \Filt_{t}\big).
\end{gather}
(iii)  If the $\GDG_t(X,Y,Z)$ has a value then it equals to $V_t$.
\end{lemma}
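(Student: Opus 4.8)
The plan is to prove the three parts in sequence, with the pathwise construction in part (i) carrying the real content and parts (ii)--(iii) following by soft comparison arguments.

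For part (i) I would build $\htau_t$ (and, symmetrically, $\hsi_t$) as an explicit pathwise modification of $\tau_t$ that reacts to the opponent's move. The guiding idea is that $L_u = X_u \wedge Z_u$ and $U_u = Y_u \vee Z_u$, so realising the modified payoff $\wt R$ only requires the max-player to exploit the better of the two quantities available to him at the relevant instant. Concretely, I would set $\htau_t := \tau_t$ on $\{\tau_t \leq \si_t\}$, $\htau_t := \si_t$ on $\{\si_t < \tau_t\} \cap \{Z_{\si_t} > Y_{\si_t}\}$, and $\htau_t := T$ on $\{\si_t < \tau_t\} \cap \{Z_{\si_t} \leq Y_{\si_t}\}$. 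Inspecting the four events $\{\tau_t < \si_t\}$, $\{\tau_t = \si_t\}$, and the two sub-events of $\{\si_t < \tau_t\}$ then yields $R(\htau_t,\si_t) = X_{\tau_t} \geq L_{\tau_t}$, $R(\htau_t,\si_t) = Z_{\tau_t} = \wt R$, $R(\htau_t,\si_t) = Z_{\si_t} = U_{\si_t}$, and $R(\htau_t,\si_t) = Y_{\si_t} = U_{\si_t}$, respectively, which is exactly $R(\htau_t,\si_t) \geq \wt R(\tau_t,\si_t)$. The construction of $\hsi_t$, interchanging the roles of $X/Y$ and $L/U$, gives the right inequality of \eqref{eqeo121} by symmetry.

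The main obstacle---really the only delicate point---is verifying that $\htau_t$ is a genuine $\FF$-stopping time. Here the key observation is that on the event where the min-player stops first, the max-player's decision whether to stop simultaneously (capturing $Z_{\si_t}$ when $Z_{\si_t} > Y_{\si_t}$) or to continue (conceding $Y_{\si_t}$ when $Y_{\si_t} \geq Z_{\si_t}$) depends only on the sign of $Z_{\si_t} - Y_{\si_t}$, which is measurable at the stopping instant; combined with the fact that $\{\tau_t \leq \si_t\}$ and $\{\si_t < \tau_t\}$ are resolved by the time the earlier of $\tau_t,\si_t$ occurs, this places each set $\{\htau_t = u\}$ in $\Filt_u$.

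For part (ii) I would combine (i) with the facts already established for the standard game. Since $V$ is the value process of $\SDG_t(L,U,Z)$ and $(\tau^*_t,\si^*_t)$ is a Nash equilibrium of that game by \eqref{eqen23}, Proposition \ref{pro:22}(i) gives $\E_\P(\wt R(\tau^*_t,\si_t)\,|\,\Filt_t) \geq V_t \geq \E_\P(\wt R(\tau_t,\si^*_t)\,|\,\Filt_t)$ for all $\tau_t,\si_t$. To obtain the right inequality of \eqref{eqeo122} I fix $\tau_t$, apply part (i) with reference strategy $\si^*_t$ to produce $\hsi_t$ with $R(\tau_t,\hsi_t) \leq \wt R(\tau_t,\si^*_t)$, take conditional expectations, and conclude $\essinf_{\si_t}\E_\P(R(\tau_t,\si_t)\,|\,\Filt_t) \leq V_t$; an $\esssup$ over $\tau_t$ finishes it. The left inequality follows symmetrically, using $\htau_t$ against the reference $\tau^*_t$. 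Finally, part (iii) is immediate: if $\GDG_t(X,Y,Z)$ has a value, the minimax and maximin coincide, and \eqref{eqeo122} squeezes this common quantity to $V_t$.
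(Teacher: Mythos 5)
Your proposal is correct and follows essentially the same route as the paper: the pathwise modification stopping with $\si_t$ exactly on $\{\si_t<\tau_t,\,Z_{\si_t}>Y_{\si_t}\}$ (your additional choice of $T$ on $\{\si_t<\tau_t,\,Z_{\si_t}\leq Y_{\si_t}\}$ is a cosmetic variant of the paper's $\tau_t$, since the payoff is $Y_{\si_t}=U_{\si_t}$ either way), the same $\Filt_{\si_t\wedge\tau_t}$-measurability argument, and the same use of the Nash equilibrium of $\SDG_t(L,U,Z)$ plus part (i) to squeeze the minimax and maximin around $V_t$.
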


\begin{proof} (i) We will only prove the upper inequality of \eqref{eqeo121}, as the lower inequalities follows by symmetry. To choose a stopping time $\htau$ such that $R(\htau_t , \si_t ) \geq \wt R(\tau_t , \si_t ) $, we first compare $R(\tau_t , \si_t )$ and $\wt R(\tau_t , \si_t ) $. On the following events, $R(\tau_t , \si_t ) \geq\wt R(\tau_t , \si_t )$ is automatically satisfied.
\begin{align*}
{\{\tau_t = \si_t\}},&\quad R(\tau_t , \si_t ) = Z_{\tau_t}=\wt R(\tau_t , \si_t );\\
{\{\tau_t < \si_t\}},&\quad R(\tau_t , \si_t ) = X_{\tau_t}\geq L_{\tau_t} =\wt R(\tau_t , \si_t );\\
\{ \si_t < \tau_t ,\, Y_{\si_t}\geq Z_{\si_t}\},&\quad R(\tau_t , \si_t )=Y_{\si_t}=U_{\si_t} = \wt R(\tau_t , \si_t ).
\end{align*}
The problem arises on the event $\{\si_t < \tau_t ,\, Z_{\si_t}>Y_{\si_t}\}$, since then
\[
R(\tau_t , \si_t )=Y_{\si_t} < U_{\si_t} = \wt R(\tau_t , \si_t ).
\]
Let us modify $\tau$ by setting
\begin{gather*}
\htau = \si_t \I_{\{\si_t < \tau_t, Z_{\si_t}>Y_{\si_t}\}} + \tau_t \big(1- \I_{\{\si_t < \tau_t, Z_{\si_t}>Y_{\si_t}\}}\big).
\end{gather*}
Then $\htau$ is indeed an $\FF$-stopping time, since the event $\{\si_t < \tau_t, Z_{\si_t}>Y_{\si_t}\}$ belongs to $\Filt_{\si_t\wedge \tau_t}$. Furthermore, on the event $\{\si_t < \tau_t, Z_{\si_t}>Y_{\si_t}\}$ we have that
\[
R(\htau_t , \si_t ) = R(\si_t , \si_t ) = Z_{\si_t} = U_{\si_t} =\wt R(\tau_t , \si_t )
\]
and thus for the stopping time $\htau $ the left-hand side inequality in \eqref{eqeo121} is satisfied.

\noindent (ii) Again, we only show the upper inequality of \eqref{eqeo122}. By Proposition \ref{pro:22}, $V_t$ is the value of $\SDG_t(L,U,Z)$ associated with $\wt R$. Let $(\tau^*_t, \si^*_t)$ be a Nash equilibrium of $\SDG_t(L,U,Z)$. Hence we have, for any $\si_t\in\Tstrat_{[t,T]}$,
\begin{gather*}
\E_\P \big(\wt R(\tau^*_t , \si_t )\,|\,  \Filt_{t}\big) \geq \E_\P \big(\wt R(\tau^*_t , \si^*_t )\,|\,  \Filt_{t}\big) = V_t.
\end{gather*}
By part (i), there exists $\htau_t\in\Tstrat_{[t,T]}$ such that $R(\htau_t , \si_t )\geq \wt R(\tau^*_t , \si_t )$. Consequently,
\begin{gather}\label{lemeo1221}
\esssup_{\tau_t \in\STOP_{[t,T]}}
\E_\P \big(R(\tau_t , \si_t )\,|\,  \Filt_{t}\big) \geq
\E_\P \big(R(\htau_t , \si_t )\,|\,  \Filt_{t}\big) \geq
\E_\P \big(\wt R(\tau^*_t , \si_t )\,|\,  \Filt_{t}\big) \geq V_t.
\end{gather}
Since \eqref{lemeo1221} holds for all $\si_t\in\Tstrat_{[t,T]}$, we must have
\begin{gather*}
\essinf_{\si_t \in\STOP_{[t,T]}} \, \esssup_{\tau_t \in\STOP_{[t,T]}}
\E_\P \big(R(\tau_t , \si_t )\,|\,  \Filt_{t}\big) \geq V_t,
\end{gather*}
as required.

\noindent (iii) By the definition of the value (see Definition \ref{defaa03}), if there exists a value $V^*_t$ for $\GDG_t(X,Y,Z)$, it must satisfy
\begin{gather}\label{lemeo1231}
V^*_t=\essinf_{\si_t \in\STOP_{[t,T]}} \, \esssup_{\tau_t \in\STOP_{[t,T]}}
\E_\P \big(R(\tau_t , \si_t )\,|\,  \Filt_{t}\big) =
\esssup_{\tau_t \in\STOP_{[t,T]}} \, \essinf_{\si_t \in\STOP_{[t,T]}}
\E_\P \big(R(\tau_t , \si_t )\,|\,  \Filt_{t}\big).
\end{gather}
In view of part (ii), we conclude that necessarily $V^*_t=V_t$.
\end{proof}

Even though $V$ is the unique value process candidate for $\GDG (X,Y,Z)$, the existence of the value process has not been established. There are two major obstacles to overcome when attempting to apply the backward induction argument similar to Proposition \ref{pro:22} on the payoff process $R$.

First, it is not necessarily true that $V_t=\E_\P \big(R(\tau^{*}_{t},\si^{*}_{t} )\,|\,  \Filt_{t}\big)$. In particular, this equality fails to hold if either of the following occurs:
\begin{alignat}{2}
Z_{\tau^*_t} &= V_{\tau^*_t} < X_{\tau^*_t}\wedge Y_{\tau^*_t}&\quad\text{on the event}\quad&\{\tau^*_t < \si^*_t\},\label{eqen31}\\
Z_{\si^*_t} &= V_{\si^*_t} > X_{\si^*_t}\vee Y_{\si^*_t}&\quad\text{on the event}\quad&\{\si^*_t < \tau^*_t\}.\label{eqen32}
\end{alignat}
 Second, it is possible that $V$ fails to satisfy any of the
necessary conditions on $V^*$ established in Lemma \ref{lemee05v}. An exhaustive check shows that the exceptions are:
\begin{align}
Z_t \leq V_t < X_t\wedge Y_t,\label{eqen33}\\
Z_t \geq V_t > X_t\vee Y_t.\label{eqen34}
\end{align}
It is crucial to observe that the undesirable scenarios may only occur when $V$ is either greater than $X\vee Y$ or less than $X\wedge Y$. Therefore, it is natural to introduce the following additional assumption.

\begin{assumption}\label{assen40}
Let $X, Y$ and $Z$ be $\FF$-adapted integrable processes and let the associated process $V$ be given as in Definition \ref{def:vp2}. We postulate that the processes $X, Y$ and $V$ satisfy, for $t=0,1,\ldots,T$,
\begin{gather}\label{assen41}
X_t\wedge Y_t \leq V_t \leq X_t\vee Y_t .
\end{gather}
\end{assumption}

Assumption \ref{assen40} certainly eliminates the scenarios described in \eqref{eqen31}--\eqref{eqen34}. Since $V$ is defined in terms of $X, Y$ and $Z$, this is really an assumption on $X, Y$ and $Z$, albeit its form is somewhat convoluted, since it also
refers to formula \eqref{eq:cb3}. In the foregoing example, we provide some more explicit conditions that entail
Assumption \ref{assen40}.

\begin{example} (i) Let us first consider the conditions from the previous section: $X_T = Z_T = Y_T$ and $X \leq Z \leq Y$.
In view of \eqref{eqen21}, it is clear that Assumption \ref{assen40} is satisfied since $L=X=X\wedge Y$ and $U=Y=X\vee Y$. This shows that Assumption \ref{assen40} covers the case of the standard zero-sum Dynkin game.

\noindent (ii) Suppose $X, Y$ and $Z$ satisfy $X_T = Z_T = Y_T$ and, for all $t \in [0,T]$,
\begin{gather*}
X_t\wedge Y_t > Z_t \quad \implies \quad X_t\wedge Y_t \leq \E_\P\big(X_{t+1}\wedge U_{t+1} \,|\,\Filt_t\big),\\
X_t\vee Y_t < Z_t \quad \implies \quad X_t\vee Y_t \geq \E_\P\big( L_{t+1} \vee Y_{t+1} \,|\,\Filt_t\big).
\end{gather*}
 One can check that Assumption \ref{assen40} is satisfied.

 \noindent (iii) It should be acknowledged that various generalisations of the standard Dynkin game were studied
 in the literature. In particular,  Ohtsubo \cite{ohtsubo1987nonzero} examined the zero-sum Dynkin game with an infinite time horizon under
 the assumption that
 \begin{gather}\label{accen41}
X_t\wedge Y_t \leq Z_t \leq X_t\vee Y_t .
\end{gather}
Once again, we see that if \eqref{accen41} holds then  Assumption \ref{assen40} is satisfied.
  He established the existence of a Nash equilibrium for the game starting at any date $t$ under the assumption that $(X_t)$ and $(Y_t)$ are mutually independent sequences of i.i.d.\! random variables (see Corollary 3.2 in \cite{ohtsubo1987nonzero}).

As a special case, Ohtsubo  \cite{ohtsubo1987nonzero} considered also the game with the payoff
\begin{gather*} 
R(\tau , \si ) := \I_{\{ \tau < \si \}}\, X_{\tau }  +  \I_{\{ \si  \leq \tau \}}\,
Y_{\si }
\end{gather*}
for arbitrary $\FF$-adapted, integrable processes $X$ and $Y$. Since here $Z=Y$, so that $L = X \wedge Y$ and $U = Y$, it follows
easily from  \eqref{eqen21} that Assumption \ref{assen40} is satisfied.
It can be deduced from Proposition 3.1 in \cite{ohtsubo1987nonzero} that in the finite horizon case
the game admits a Nash equilibrium and
the value process $V^*$ satisfies: $V^*_T = Y_T = Z_T $ and, for $t=0,1,\dots, T-1$,
\begin{gather} \label{xeq:cb3}
V^*_{t} =\, Y_{t} \, \I_{\{ Y_t \leq X_t \}}
+ \min \Big \{ Y_t ,\, \max \big\{ X_t, \E_\P  ( V^*_{t+1}\,|\,\Filt_{t}) \big\} \Big\} \I_{\{ Y_t > X_t \}}.
\end{gather}
This result can be seen as a special case of Proposition \ref{propee10}, since for $Y=Z$ equation \eqref{eq:cb3}
becomes: $V_T =  Z_T = Z_T $ and
\begin{gather*}
V_{t} := \min \Big \{ Y_t ,\, \max \big\{ X_t \wedge Y_t , \E_\P  ( V_{t+1}\,|\,\Filt_{t}) \big\} \Big\},
\end{gather*}
which indeed coincides with \eqref{xeq:cb3}, so that $V = V^*$ where $V^*$ is given by \eqref{xeq:cb3}.
\end{example}

\subsubsection{Sufficiency of Assumption \ref{assen40}}

Our goal is to demonstrate that Assumption \ref{assen40} is the necessary and sufficient condition for \eqref{eqem005} to hold.
 We start by examining the sufficiency of Assumption \ref{assen40}.
To this end, we first prove an auxiliary lemma.

\begin{lemma} \label{lemee05} Under Assumption \ref{assen40}, for each $t=0,1,\ldots,T$, the process $V$ satisfies:
\hfill \break (i)  $\{V_t>Y_t\}\subseteq \{\tau^*_t=t\}$ and $\{V_t<X_t\}\subseteq \{\si^*_t=t\}$;
\hfill \break (ii)  $V_t=\E_\P\big(R(\tau^*_t,\si^*_t)\,|\, \Filt_t\big)$.
\end{lemma}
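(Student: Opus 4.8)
The plan is to prove (i) by a direct event-wise computation and then to leverage it, together with the already-established identity \eqref{eqen23}, to obtain (ii). Throughout I would rely on the standing bounds $L_t \le V_t \le U_t$ from \eqref{eqen21} and on Assumption \ref{assen40}, namely $X_t\wedge Y_t \le V_t \le X_t\vee Y_t$. Recall that, by definition \eqref{eqee131}--\eqref{eqee132}, the events $\{\tau^*_t=t\}$ and $\{\si^*_t=t\}$ are exactly $\{V_t=L_t\}$ and $\{V_t=U_t\}$, so (i) is really a pair of comparison statements about $V_t, X_t, Y_t, L_t, U_t$.

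For part (i), I would fix $t$ and argue on the event $\{V_t > Y_t\}$. Since $V_t \le X_t \vee Y_t$ and $V_t > Y_t$, the maximum $X_t \vee Y_t$ cannot equal $Y_t$, which forces $X_t > Y_t$ and hence $V_t \le X_t$. Likewise, from $V_t \le U_t = Y_t \vee Z_t$ and $V_t > Y_t$ I get $Z_t > Y_t$ and therefore $V_t \le Z_t$. Combining these, $V_t \le X_t \wedge Z_t = L_t$, which together with $V_t \ge L_t$ yields $V_t = L_t$, i.e.\ $\tau^*_t = t$. The inclusion $\{V_t < X_t\} \subseteq \{\si^*_t = t\}$ follows by the symmetric argument, exchanging the roles of $X$ and $Y$, of $L$ and $U$, and of the two players. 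This step is routine; its only content is reading off the right comparisons from the two bounds.

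For part (ii), I would recall from \eqref{eqen23} that $V_t = \E_\P(\wt R(\tau^*_t, \si^*_t)\,|\,\Filt_t)$, so it suffices to show $R(\tau^*_t,\si^*_t) = \wt R(\tau^*_t,\si^*_t)$ holds $\Prob$-a.s. Comparing \eqref{eqee00} and \eqref{eqen22}, the two payoffs already agree on $\{\tau^*_t = \si^*_t\}$, where both equal $Z$, so only $\{\tau^*_t < \si^*_t\}$ and $\{\si^*_t < \tau^*_t\}$ require work. On $\{\tau^*_t < \si^*_t\}$, decomposing over the value $u$ of $\tau^*_t$, the definitions give $V_u = L_u$ and $V_u < U_u$. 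The hard part is precisely here: I would invoke the contrapositive of the second inclusion in (i), namely $V_u < U_u \Rightarrow V_u \ge X_u$; together with $V_u = L_u = X_u \wedge Z_u \le X_u$ this gives $V_u = X_u$ and $X_u \le Z_u$, so that $R = X_{\tau^*_t} = L_{\tau^*_t} = \wt R$ on this event. The event $\{\si^*_t < \tau^*_t\}$ is handled symmetrically, using the first inclusion of (i) (in the form $V_u > L_u \Rightarrow V_u \le Y_u$) to show $Y_{\si^*_t} = U_{\si^*_t}$. Since the three events partition the sample space and $R = \wt R$ on each, part (ii) follows upon taking conditional expectations. The main obstacle, then, is recognising that part (i) must be applied at the \emph{random} stopping times $\tau^*_t$ and $\si^*_t$, which is legitimate because (i) consists of inclusions of measurable sets and can be applied slice-by-slice in $u$, and that the strict inequality $V_u < U_u$ (resp.\ $V_u > L_u$) available at the stopping time is exactly what triggers the relevant inclusion.
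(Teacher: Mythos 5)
Your proposal is correct and follows essentially the same route as the paper: part (i) is the identical comparison argument ($V_t>Y_t$ forces $V_t\leq X_t$ via Assumption \ref{assen40} and $V_t\leq Z_t$ via $V_t\leq U_t$, hence $V_t=L_t$), and part (ii) reduces, via \eqref{eqen23}, to showing $R(\tau^*_t,\si^*_t)=\wt R(\tau^*_t,\si^*_t)$ on each of the three events, applying (i) at the random times $\tau^*_t$ and $\si^*_t$ — the paper phrases this as a proof by contradiction (if $V_{\si^*_t}>Y_{\si^*_t}$ then $\tau^*_t=\si^*_t$, contradicting $\si^*_t<\tau^*_t$) where you use the equivalent contrapositive, and it targets the identity $V_{\tau^*_t\wedge\si^*_t}=R(\tau^*_t,\si^*_t)$, which coincides with your identity since $\wt R(\tau^*_t,\si^*_t)=V_{\tau^*_t\wedge\si^*_t}$ by the definitions of the stopping times.
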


\begin{proof} (i) For the first inclusion, let us suppose that $V_t>Y_t$. Since Assumption \ref{assen40} states that $V_t$ has to lie in between $X_t$ and $Y_t$, we obtain
\begin{gather}\label{lemee0551}
V_t\leq X_t.
\end{gather}
From \eqref{eqen21} we obtain $V_t\leq U_t=Y_t\vee Z_t$, and thus we must also have
\begin{gather}\label{lemee0552}
V_t\leq Z_t.
\end{gather}
By combining \eqref{lemee0551} with \eqref{lemee0552}, we obtain $V_t\leq X_t\wedge Z_t = L_t$. Moreover, by noting
 that $V_t\geq L_t$ from \eqref{eqen21}, we conclude that $V_t = L_t$ and thus, by \eqref{eqee131}, the equality $\tau^*_t=t$ holds, as required. The second inclusion can be shown using similar arguments.

\noindent (ii) It is sufficient to show
\begin{gather}\label{lemee070}
V_{\tau^*_t\wedge \si^*_t} = R(\tau^*_t,\si^*_t) = \I_{\{ \tau^*_t < \si^*_t \}}\, X_{\tau^*_t }  +  \I_{\{ \si^*_t  < \tau^*_t \}}\,
Y_{\si^*_t }  +  \I_{\{ \si^*_t  = \tau^*_t \}}\, Z_{\si^*_t }.
\end{gather}
On the event $\{ \si^*_t  = \tau^*_t \}$, we have $V_{\si^*_t} = U_{\si^*_t} = L_{\si^*_t} = Z_{\si^*_t}$ as required.
On the event $\{ \si^*_t  < \tau^*_t \}$, we have $V_{\si^*_t} = U_{\si^*_t} = Z_{\si^*_t} \vee Y_{\si^*_t} \geq Y_{\si^*_t}$. If $V_{\si^*_t} > Y_{\si^*_t}$, then from (i), we obtain $\tau^*_t=\si^*_t$, which is a contradiction.
 We thus conclude that $V_{\si^*_t} = Y_{\si^*_t}$, as required.

The case of $\{ \si^*_t  > \tau^*_t \}$ is similar to the case of $\{ \si^*_t  < \tau^*_t \}$. This establishes \eqref{lemee070}.
\end{proof}

 We are now in a position to show that Assumption \ref{assen40} implies the existence of Nash equilibria
 for the family of Dynkin games $\GDG_t(X,Y,Z),\, t=0,1, \dots , T$.

\begin{proposition}\label{propee10}
Let the process $V$ and the stopping times $\tau^{*}_{t},\si^{*}_{t} $  be given as in
Definition \ref{def:vp2}. If Assumption \ref{assen40} holds then for arbitrary stopping times $\tau_t, \si_t \in T_{[t,T]}$,
\begin{gather*} 
\E_\P \big( R(\tau^{*}_{t},\si_t )\,|\,  \Filt_{t}\big) \geq \E_\P \big(
R(\tau^{*}_{t},\si^{*}_t )\,|\,  \Filt_{t}\big) \geq \E_\P \big(
R(\tau_t ,\si^{*}_{t})\,|\,  \Filt_{t}\big)
\end{gather*}
and thus $(\tau^{*}_{t},\si^{*}_{t})$ is a Nash equilibrium of $\GDG_t(X,Y,Z)$.
Furthermore, the process $V$ is the value process of $\GDG (X,Y,Z)$, that is, for every $t=0,1,\dots ,T$,
\begin{align*} 
V_t &= \essinf_{\si_t \in\STOP_{[t,T]}} \,
\esssup_{\tau_t \in\STOP_{[t,T]}} \E_\P \big(R(\tau_t ,\si_t )\,|\,
\Filt_{t}\big)=\EP \big(R(\tau^{*}_t ,\si^{*}_t)\,|\, \Filt_{t}\big)
 \\ &=  \esssup_{\tau_t \in\STOP_{[t,T]}} \, \essinf_{\si_t \in\STOP_{[t,T]}}
\E_\P \big(R(\tau_t ,\si_t )\,|\, \Filt_{t}\big) = V_t^* , \nonumber
\end{align*}
and $\tau^{*}_t, \si^{*}_t$ are the optimal stopping times as of time $t$. In particular, $V^*_T = Z_T$
and for any $t=0,1,\dots ,T-1$,
\begin{gather*} 
V_{t}^* = \min \Big \{U_t ,\, \max \big\{ L_t, \E_\P  ( V_{t+1}^*\,|\,\Filt_{t}) \big\} \Big\}.
\end{gather*}
\end{proposition}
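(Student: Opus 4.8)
The plan is to establish the two saddle‑point inequalities
\[
\E_\P \big(R(\tau^{*}_{t},\si_t )\,|\,\Filt_{t}\big)\;\geq\; V_t \;\geq\; \E_\P\big(R(\tau_t,\si^{*}_{t})\,|\,\Filt_{t}\big)
\]
for all $\tau_t,\si_t\in\Tstrat_{[t,T]}$ and all $t=0,1,\dots,T$. Together with the identity $V_t=\E_\P(R(\tau^{*}_t,\si^{*}_t)\,|\,\Filt_{t})$ already supplied by Lemma \ref{lemee05}(ii), these immediately give the displayed Nash chain and the equilibrium property. Because the game is symmetric between the two players (as exploited in Proposition \ref{pro:22} and Lemma \ref{lemeo12}), it suffices to prove the upper inequality $\E_\P(R(\tau^{*}_t,\si_t)\,|\,\Filt_{t})\geq V_t$; the lower one follows from the analogous argument with the roles of $\tau^{*}_t$ and $\si^{*}_t$, and of $L$ and $U$, interchanged. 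I would prove this by backward induction on $t$, exactly as in Proposition \ref{pro:22}; the base case $t=T$ is immediate, since $V_T=Z_T=L_T$ forces $\tau^{*}_T=T$ and hence $R(T,\si_T)=Z_T=V_T$.

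For the inductive step I assume $\E_\P(R(\tau^{*}_t,\si_t)\,|\,\Filt_{t})\geq V_t$ for every $\si_t\in\Tstrat_{[t,T]}$, fix an arbitrary $\si_{t-1}\in\Tstrat_{[t-1,T]}$, and partition $\Omega$ into the three $\Filt_{t-1}$-measurable events $\{\tau^{*}_{t-1}=t-1\}$, $\{\tau^{*}_{t-1}>t-1,\,\si_{t-1}=t-1\}$ and $\{\tau^{*}_{t-1}>t-1,\,\si_{t-1}>t-1\}$, establishing $\E_\P(R(\tau^{*}_{t-1},\si_{t-1})\,|\,\Filt_{t-1})\geq V_{t-1}$ on each by localising the conditional expectation. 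On $\{\tau^{*}_{t-1}=t-1\}$ one has $V_{t-1}=L_{t-1}$ by \eqref{eqee131}, and reading off \eqref{eqee00} gives $R=X_{t-1}\geq L_{t-1}$ when $\si_{t-1}>t-1$ and $R=Z_{t-1}\geq L_{t-1}$ when $\si_{t-1}=t-1$, so $R\geq V_{t-1}$ pointwise. On $\{\tau^{*}_{t-1}>t-1,\,\si_{t-1}>t-1\}$ the game is not stopped at $t-1$, so $V_{t-1}>L_{t-1}$ and $\tau^{*}_{t-1}=\tau^{*}_t$ with both times exceeding $t-1$; the tower property and the induction hypothesis then yield $\E_\P(R(\tau^{*}_{t-1},\si_{t-1})\,|\,\Filt_{t-1})\geq\E_\P(V_t\,|\,\Filt_{t-1})$, and since $V_{t-1}>L_{t-1}$ the recursion \eqref{eq:cb3} collapses to $V_{t-1}=\min\{U_{t-1},\E_\P(V_t\,|\,\Filt_{t-1})\}\leq\E_\P(V_t\,|\,\Filt_{t-1})$, closing this case.

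The main obstacle is the middle event $\{\tau^{*}_{t-1}>t-1,\,\si_{t-1}=t-1\}$, where the min‑player stops while the max‑player continues: here $R(\tau^{*}_{t-1},\si_{t-1})=Y_{t-1}$, and one needs $Y_{t-1}\geq V_{t-1}$. In the standard game this was automatic from $V\leq U=Y$, but for unordered payoffs it can fail and is precisely the point where Assumption \ref{assen40} is indispensable. It is salvaged by Lemma \ref{lemee05}(i), which gives $\{V_{t-1}>Y_{t-1}\}\subseteq\{\tau^{*}_{t-1}=t-1\}$, so that $V_{t-1}\leq Y_{t-1}$ holds throughout the complementary event $\{\tau^{*}_{t-1}>t-1\}$. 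Combining the three cases over the partition yields $\E_\P(R(\tau^{*}_{t-1},\si_{t-1})\,|\,\Filt_{t-1})\geq V_{t-1}$, completing the induction, and the lower inequality follows by symmetry. Finally, the two saddle‑point inequalities show that $\GDG_t(X,Y,Z)$ possesses a value, whereupon Lemma \ref{lemeo12}(ii)--(iii) identifies that value with $V_t$ and delivers the minimax/maximin equalities $V_t=V^{*}_t$, while the dynamic programming identity for $V^{*}$ is simply \eqref{eq:cb3} restated.
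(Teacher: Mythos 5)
Your proof is correct and takes essentially the same route as the paper's: backward induction on the saddle-point inequalities $\E_\P(R(\tau^{*}_{t},\si_t)\,|\,\Filt_{t}) \geq V_t \geq \E_\P(R(\tau_t,\si^{*}_{t})\,|\,\Filt_{t})$ with the identical three-case decomposition, the same appeal to Lemma \ref{lemee05}(i) to get $V_{t-1}\leq Y_{t-1}$ on $\{\tau^{*}_{t-1}>t-1,\ \si_{t-1}=t-1\}$, the same use of Lemma \ref{lemee05}(ii) for the Nash chain, and symmetry for the lower bound. The only (cosmetic) difference is that you close by invoking Lemma \ref{lemeo12}(ii)--(iii) to identify the value, whereas the paper deduces this directly from the saddle inequalities.
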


\begin{proof}
The arguments used in this proof will be very similar to the ones from Proposition \ref{pro:22}, with the help of Lemma \ref{lemee05}.
In view of part (ii) in Lemma \ref{lemee05}, it is sufficient to show that
\begin{gather}\label{eqee14}
\E_\P \big( R(\tau^{*}_{t},\si_{t} )\,|\,  \Filt_{t}\big) \geq V_{t}
\geq \E_\P \big( R(\tau_{t} ,\si^{*}_{t})\,|\,  \Filt_{t}\big).
\end{gather}
To this end, we proceed by backward induction. The inequalities \eqref{eqee14} clearly hold for $t=T$. Assume now that they are true for some $t \leq T$. We wish to prove that, for arbitrary  $\tau_{t-1},\si_{t-1}\in T_{[t-1,T]}$,
\begin{gather}\label{eqee15}
\E_\P \big( R(\tau^{*}_{t-1},\si_{t-1} )\,|\,  \Filt_{t-1}\big) \geq V_{t-1} \geq \E_\P \big(
R(\tau_{t-1} ,\si^{*}_{t-1})\,|\,  \Filt_{t-1}\big).
\end{gather}

We will establish the upper bound of \eqref{eqee15}, the lower bound follows by the symmetry of the Dynkin game. Again the argument can be split into two main cases: either $\GDG_{t-1}(X,Y,Z)$ is stopped at time $t-1$ or it is continued to time $t$ and the induction hypothesis becomes relevant. As before, for any $\tau_{t-1},\si_{t-1}\in T_{[t-1,T]}$, we denote $\wt \tau_{t-1}:=\tau_{t-1}\vee t$ and $\wt \si_{t-1}:=\si_{t-1}\vee t$, so that $\wt \tau_{t-1}, \wt \si_{t-1} \in \STOP_{[t,T]}$.

Let us examine the case where the game is stopped at time $t-1$. On the event $\{\tau^*_{t-1}=t-1\}$, we have
\begin{gather}\label{eqee16}
 \E_\P \big(R(\tau^{*}_{t-1},\si_{t-1} )\,|\,  \Filt_{t-1}\big)  =
  \I_{\{\si_{t-1} > t \}}X_{t-1} + \I_{\{ \si_{t-1} = t \}} Z_{t-1}
\geq L_{t-1} = V_{t-1}.
\end{gather}
On the event $\{\si_{t-1}=t-1<\tau^*_{t-1}\}$, we obtain $\E_\P \big(R(\tau^{*}_{t-1},\si_{t-1} )\,|\,  \Filt_{t-1}\big)  = Y_{t-1}$. If $V_{t-1}> Y_{t-1}$ then, by part (i) in Lemma \ref{lemee05}, we have that $\tau^*_{t-1}= t-1$, which is a contradiction. Hence $V_{t-1}\leq Y_{t-1}$ and thus
\begin{align}\label{eqee18}
\E_\P \big(R(\tau^{*}_{t-1},\si_{t-1} )\,|\,  \Filt_{t-1}\big) = Y_{t-1} \geq V_{t-1}.
\end{align}
Let us now assume that the game is not stopped at time $t-1$, that is, we now consider the event $\{\tau^*_{t-1}\geq t,\si_{t-1}\geq t\}$. We observe that here $\tau^*_{t-1}=\tilde \tau^*_{t-1}=\tau^*_{t}$ and $V_{t-1}>L_{t-1}$, so that  \eqref{eq:cb3} yields
\begin{gather}\label{eqee17}
V_{t-1}=\min\big\{U_{t-1},\E_\P \big( V_{t} \,|\,  \Filt_{t-1}\big)\big\}.
\end{gather}
Consequently,
\begin{align}
\E_\P \big(R(\tau^{*}_{t-1},\si_{t-1} )\,|\,  \Filt_{t-1}\big)  &= \E_\P \big( R(\tilde \tau^{*}_{t-1},\tilde \si_{t-1} )\,|\,  \Filt_{t-1}\big) \nonumber\\
&= \E_\P \big( \E_\P(R(\tau^{*}_{t},\tilde \si_{t-1} )\,|\,  \Filt_t\big) \,|\,  \Filt_{t-1}\big)\nonumber\\
&\geq \E_\P \big( V_{t} \,|\,  \Filt_{t-1}\big) \label{eqee19}\\
&\geq \min\big\{U_{t-1},\E_\P \big( V_{t} \,|\,  \Filt_{t-1}\big)\big\} \nonumber\\
&=V_{t-1}.\label{eqee20}
\end{align}
Note that \eqref{eqee19} follows from the induction hypothesis \eqref{eqee14} while \eqref{eqee20} follows from \eqref{eqee17}.

Combining \eqref{eqee16}, \eqref{eqee18} and \eqref{eqee20} gives the upper inequality of \eqref{eqee15}. As already mentioned, the lower inequality of \eqref{eqee15} follows by symmetry. Therefore, $(\tau^*_t,\si^*_t)$ is a Nash  equilibrium of $\GDG_t(X,Y,Z)$ and $V_t=\E_\P \big(R(\tau^{*}_{t},\si^{*}_{t} )\,|\,  \Filt_{t}\big)$ is the value.
\end{proof}

\subsubsection{Necessity of Assumption \ref{assen40}}

To prove that Assumption \ref{assen40} is also a necessary condition for property \eqref{eqem005} to hold, it suffices to show that if this assumption is violated then there exists $t$ such that the general Dynkin game $\GDG_t(X,Y,Z)$ does not have a Nash equilibrium.
Recall that the process $V$ in Definition \ref{def:vp2} was originally chosen to be the value process of the Dynkin game $\SDG (L,U,Z)$ associated with the payoff process
\begin{gather*} 
\wt R(\tau , \si ) := \I_{\{ \tau < \si \}}\, L_{\tau }  +  \I_{\{ \si  < \tau \}}\,
U_{\si }  +  \I_{\{ \si  = \tau \}}\, Z_{\si }.
\end{gather*}
Next, in Lemma \ref{lemeo12}, it was shown that if the Dynkin game $\GDG (X,Y,Z)$ associated with the payoff process
\begin{gather*} 
R(\tau , \si ) := \I_{\{ \tau < \si \}}\, X_{\tau }  +  \I_{\{ \si  < \tau \}}\,
Y_{\si }  +  \I_{\{ \si  = \tau \}}\, Z_{\si }.
\end{gather*}
has a value process then it has to be a version of $V$.
Finally, we formulated Assumption \ref{assen40}, which was shown to ensure that $\GDG (X,Y,Z)$ has a value process.

\begin{proposition}\label{propeo20}
Suppose that Assumption \ref{assen40} is violated at time $t\in[0,T]$, that is,
\begin{gather}\label{eqeo20}
\P \big( \{V_t < X_t \wedge Y_t\} \cup \{V_t > X_t \vee Y_t\} \big) > 0 .
\end{gather}
Then $\GDG_t(X,Y,Z)$ does not have a Nash equilibrium.
\end{proposition}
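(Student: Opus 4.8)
The plan is to argue by contraposition: I would suppose that $\GDG_t(X,Y,Z)$ \emph{does} admit a Nash equilibrium $(\tau^*_t,\si^*_t)$ and deduce that Assumption \ref{assen40} must hold at time $t$, contradicting \eqref{eqeo20}. First I would observe that the mere existence of an equilibrium forces the game to possess a value. Indeed, the two defining inequalities of a Nash equilibrium say that $\tau^*_t$ best-responds to $\si^*_t$ and $\si^*_t$ best-responds to $\tau^*_t$, which yields the chain
\begin{gather*}
\essinf_{\si_t \in\STOP_{[t,T]}} \esssup_{\tau_t \in\STOP_{[t,T]}} \E_\P\big(R(\tau_t,\si_t)\,|\,\Filt_t\big) \le \E_\P\big(R(\tau^*_t,\si^*_t)\,|\,\Filt_t\big) \le \esssup_{\tau_t \in\STOP_{[t,T]}} \essinf_{\si_t \in\STOP_{[t,T]}} \E_\P\big(R(\tau_t,\si_t)\,|\,\Filt_t\big).
\end{gather*}
Combined with the trivial reverse (minimax) inequality, this pins down a common value equal to $\E_\P(R(\tau^*_t,\si^*_t)\,|\,\Filt_t)$, and Lemma \ref{lemeo12}(iii) then identifies that value with $V_t$. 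Hence $V_t=\E_\P(R(\tau^*_t,\si^*_t)\,|\,\Filt_t)$.

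Next I would split $\Omega$ into the four $\Filt_t$-measurable events determined by whether $\tau^*_t=t$ or $\tau^*_t>t$ and whether $\si^*_t=t$ or $\si^*_t>t$; these exhaust the sample space. This places us precisely in the setting of Lemma \ref{lemee05v} with $V^*_t=V_t$, so its four displayed conditions hold on the respective events. The key point is that each of them forces $X_t\wedge Y_t\le V_t\le X_t\vee Y_t$: on $\{\tau^*_t=t,\si^*_t>t\}$ and $\{\tau^*_t>t,\si^*_t=t\}$ the equalities $V_t=X_t$ and $V_t=Y_t$ make the bound trivial; on $\{\tau^*_t=t,\si^*_t=t\}$ the condition $Y_t\le V_t=Z_t\le X_t$ gives $X_t\wedge Y_t=Y_t\le V_t\le X_t=X_t\vee Y_t$; and on $\{\tau^*_t>t,\si^*_t>t\}$ the condition $X_t\le V_t=P_t\le Y_t$ gives the bound directly. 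Since the four events partition $\Omega$, I would conclude $X_t\wedge Y_t\le V_t\le X_t\vee Y_t$ holds $\P$-a.s., i.e. Assumption \ref{assen40} holds at time $t$; this contradicts \eqref{eqeo20} and the proposition follows.

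The step requiring most care — and the main obstacle — is the both-continue event $\{\tau^*_t>t,\si^*_t>t\}$, where $V_t$ equals the continuation value $P_t=\E_\P(V_{t+1}\,|\,\Filt_t)$ rather than an instantaneous payoff, so the bound $X_t\le V_t\le Y_t$ is not immediate. Rather than analyse $P_t$, I would obtain it self-containedly by testing the equilibrium against immediate-stopping deviations: switching the max-player to $\tau_t=t$ yields payoff $X_t$ on this event, so the equilibrium inequality gives $X_t\le V_t$, and symmetrically switching the min-player to $\si_t=t$ yields $Y_t$, whence $V_t\le Y_t$. The same deviation argument reproves the needed inequalities on the other three events, which is reassuring because it keeps the proof valid for a single fixed $t$ without presupposing that a value process exists at all later dates (a hypothesis implicit in the derivation of $P_t$ in Lemma \ref{lemee05v}).
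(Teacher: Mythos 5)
Your proof is correct and follows essentially the same route as the paper's: both pin the value down to $V_t$ via Lemma \ref{lemeo12}(iii) and then run the four-way case split on $\{\tau^*_t=t\}$ vs.\ $\{\tau^*_t>t\}$ and $\{\si^*_t=t\}$ vs.\ $\{\si^*_t>t\}$, using immediate-stopping and delaying deviations, with the only difference being that the paper phrases this as a direct contradiction on the bad event rather than as your contrapositive. Your observation that Lemma \ref{lemee05v} cannot be invoked verbatim (it presupposes a value process at later dates) and your self-contained deviation argument on the both-continue event reflect exactly the same care the paper's proof takes implicitly, so there is no substantive divergence.
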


\begin{proof}
Since $X_T=Y_T=Z_T=V_T$ then manifestly \eqref{eqeo20} cannot occur when $t=T$. Assume, for the sake of contradiction, that \eqref{eqeo20} holds for some $t<T$ and there is a Nash equilibrium $(\tau^*_t,\si^*_t)$. Then, by part (iii) in Lemma \ref{lemeo12}, $V_t=V^*_t=\E_\P \big(R(\tau^*_t , \si^*_t )\,|\,  \Filt_{t}\big)$ is the value of $\GDG_t(X,Y,Z)$.

 Assume now that either $\P (\{V_t < X_t \wedge Y_t\})>0$ or $\P (\{V_t > X_t \vee Y_t\})>0$. First, consider the event $\{V_t < X_t \wedge Y_t\}$. Neither $\tau^*_t=t<\si^*_t$ nor $\tau^*_t=t<\si^*_t$ can occur, as otherwise we would have that either $V_t=X_t$ or $V_t=Y_t$, respectively. If $\tau^*_t=\si^*_t=t$ then $R(t , \si^*_t )=Y_t > V_t$, contradicting the property of the Nash equilibrium. If $t<\tau^*_t\wedge \si^*_t$ then $R(t , \si^*_t )=X_t > V_t$, which is also a contradiction.

The same argument can be made for the event $\{V_t > X_t \vee Y_t\}$. We conclude there cannot be a Nash equilibrium for the
Dynkin game starting at time $t$ if condition \eqref{eqeo20} is valid.
\end{proof}

Propositions \ref{pro:22} and \ref{propeo20} can be combined into the following main result of this section, which explicitly states the condition needed for the existence of a Nash equilibrium for arbitrary payoff processes $X, Y$ and $Z$. Theorem \ref{coreo25} is thus an essential generalisation of  Proposition \ref{pro:22} for the standard zero-sum Dynkin game, which only addressed the case of $X\leq Z\leq Y$.

\begin{theorem}\label{coreo25}
Let $X, Y$ and $Z$ be $\FF$-adapted, integrable processes and let the process $V$ be given by: $V_T:=Z_T$ and, for $t=0,1,\ldots,T-1$,
\begin{gather*}
V_{t} := \min \Big \{ U_t ,\, \max \big\{ L_t, \E_\P  ( V_{t+1}\,|\,\Filt_{t}) \big\} \Big\}
\end{gather*}
 where $L=X\wedge Z$ and $U=Y\vee Z$. The inequality
\begin{gather*}
X_t\wedge Y_t \leq V_t \leq X_t\vee Y_t
\end{gather*}
holds for all $t=0,1,\ldots,T$ if and only if the Dynkin game $\GDG_t(X,Y,Z)$ starting at time $t$ and associated with the payoff
\begin{gather*} 
R(\tau , \si ) := \I_{\{ \tau < \si \}}\, X_{\tau }  +  \I_{\{ \si  < \tau \}}\,
Y_{\si }  +  \I_{\{ \si  = \tau \}}\, Z_{\si }.
\end{gather*}
has a Nash equilibrium for all $t=0,1,\ldots,T$.
\end{theorem}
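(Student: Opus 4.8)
The plan is to observe that the displayed inequality $X_t\wedge Y_t \leq V_t \leq X_t\vee Y_t$, required to hold for all $t=0,1,\ldots,T$, is exactly the content of Assumption \ref{assen40}, so that the theorem merely packages the sufficiency and necessity results that have already been established. I would therefore split the biconditional into its two implications and dispatch them by invoking Propositions \ref{propee10} and \ref{propeo20} respectively, after noting that the process $V$ in the theorem statement is literally the one from Definition \ref{def:vp2}.

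For the forward implication, I would suppose the inequality holds for every $t$. Then Assumption \ref{assen40} is in force, and Proposition \ref{propee10} applies verbatim to yield that the pair $(\tau^*_t,\si^*_t)$ from Definition \ref{def:vp2} is a Nash equilibrium of $\GDG_t(X,Y,Z)$ for each $t$. No additional work is needed in this direction.

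For the reverse implication I would argue by contraposition. Suppose the inequality fails for at least one $t$, i.e. $\P(\{V_t<X_t\wedge Y_t\}\cup\{V_t>X_t\vee Y_t\})>0$ for some $t$; here $t<T$ necessarily, since $V_T=Z_T$ together with the convention $X_T=Y_T=Z_T$ forces $X_T\wedge Y_T=V_T=X_T\vee Y_T$. This is precisely the hypothesis \eqref{eqeo20} of Proposition \ref{propeo20}, whose conclusion is that $\GDG_t(X,Y,Z)$ admits no Nash equilibrium, contradicting the standing assumption that every subgame has one. Hence the inequality must hold for all $t$.

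The step carrying the real weight is not this final assembly but the lemma underpinning necessity, namely Lemma \ref{lemeo12}(iii), which guarantees that whenever $\GDG_t(X,Y,Z)$ has a value it must coincide with $V_t$, \emph{independently} of Assumption \ref{assen40}. It is this unconditional uniqueness of the value candidate that lets one convert a failure of the inequality at the level of the deterministically constructed process $V$ into genuine non-existence of equilibrium in Proposition \ref{propeo20}; without it, the reverse implication would not go through. With Lemma \ref{lemeo12} and the two propositions in hand, the theorem follows immediately by combining them.
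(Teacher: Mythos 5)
Your proposal is correct and follows essentially the same route as the paper, which obtains Theorem \ref{coreo25} precisely by combining the sufficiency result (Proposition \ref{propee10}, resting on Lemma \ref{lemee05}) with the necessity result (Proposition \ref{propeo20}, resting on Lemma \ref{lemeo12}(iii)), noting as you do that the failure of the inequality cannot occur at $t=T$ under the convention $X_T=Y_T=Z_T$. Indeed, the paper's text cites ``Propositions \ref{pro:22} and \ref{propeo20}'' for this combination, where the reference to Proposition \ref{pro:22} is evidently a slip for Proposition \ref{propee10}; your invocation of the latter is the correct one.
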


\begin{remark}
Theorem \ref{coreo25} answers the question regarding the existence of Nash equilibrium in for the set of Dynkin games starting at all times $t=0,1,\ldots,T$. For a Dynkin game starting at a particular value of $t$, the exact result is unclear. Assumption \ref{assen40} certainly provides a sufficient condition, but it is not a necessary condition.
\end{remark}

\section{Continuous-Time Dynkin Games}\label{sec2f}

In this section, we deal with continuous-time versions of two-person, zero-sum stopping games
with a finite time horizon. As previously, we focus on conditions under which the game admits
a Nash equilibrium.

\subsection{Standard Dynkin Game}\label{sec3.21}

In this preliminary subsection, we re-examine the standard zero-sum Dynkin game in continuous-time. We first recall two definitions.

%
	%
%

\begin{definition}[Value]\label{defaa03}
Consider a two-player, zero-sum game $\Game$ with strategy spaces $\Strat^{1}$ and $\Strat^{2}$ and payoff function $V$. It is said to have a \emph{value} $V^*$ if  
\begin{gather*}
V^*=\essinf_{\si \in\Strat^2} \, \esssup_{\tau_t \in\Strat^1}
V(\tau , \si ) =
\esssup_{\tau_t \in\Strat^1} \, \essinf_{\si \in\Strat^2}
V(\tau , \si ).
\end{gather*}
\end{definition}

\begin{definition}\label{defep01}
Suppose that a game $\Game$ has a value $V^{*}$. For $\ep\geq 0$, an $\ep$-\emph{optimal strategy} $\tau^\ep\in\Strat^1$ for the max-player guarantees the payoff to within $\ep$ of the value. In other words
\begin{gather}\label{eqep011}
\essinf_{\si\in\Strat^2} V(\si,\tau^\ep) \geq  V^{*}-\ep.
\end{gather}
Similarly, an $\ep$-\emph{optimal strategy} $\si^\ep\in\Strat^2$ for the min-player satisfies
\begin{gather}\label{eqep0112}
\esssup_{\tau\in\Strat^1} V(\si^\ep,\tau) \leq  V^{*}+\ep.
\end{gather}
A strategy profile $(\si,\tau)$ is is called an \emph{$\ep$-equilibrium} if it consists of $\ep$-optimal strategies for both players.
\end{definition}


Note that a Nash equilibrium is a 0-equilibrium.
The following result is easy to prove and thus the proof is omitted.

\begin{proposition}\label{propep05}
In a two-player, zero-sum game $\Game$, the following statements are equivalent.
\hfill \break (i)  The game has a value for both players.
\hfill \break (ii)  For all $\ep>0$, there exist $\ep$-optimal strategies for both players.
\hfill \break (iii)  For all $\ep>0$, there exists an $\ep$-equilibrium.
\hfill \break (iv)  For all $\ep>0$, there exists a real number $v^\ep$ and a strategy profile $(\si^\ep, \tau^\ep)$ such that
\begin{gather}\label{eqep054}
\essinf_{\si\in\Strat^{2}} V^1(\si,\tau^\ep) \geq v^\ep \geq \esssup_{\tau\in\Strat^{1}} V^1(\si^\ep,\tau).
\end{gather}
\end{proposition}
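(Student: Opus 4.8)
The plan is to prove the four statements equivalent by establishing a single cycle of implications, \mbox{(i)$\Rightarrow$(ii)$\Rightarrow$(iii)$\Rightarrow$(iv)$\Rightarrow$(i)}, exploiting throughout the elementary fact that the maximin never exceeds the minimax. Write $\underline V:=\esssup_{\tau\in\Strat^1}\essinf_{\si\in\Strat^2} V(\si,\tau)$ and $\overline V:=\essinf_{\si\in\Strat^2}\esssup_{\tau\in\Strat^1} V(\si,\tau)$ for the two iterated extrema of $\Game$. A standard interchange argument gives $\underline V\leq\overline V$, and by Definition \ref{defaa03} the game has a value exactly when $\underline V=\overline V$, the common quantity then being $V^*$.

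For (i)$\Rightarrow$(ii), suppose the value $V^*=\underline V=\overline V$ exists and fix $\ep>0$. Since $V^*$ is the essential supremum over $\tau$ of $\essinf_\si V(\si,\tau)$, I would extract a single max-player strategy $\tau^\ep$ with $\essinf_\si V(\si,\tau^\ep)\geq V^*-\ep$, and symmetrically a min-player strategy $\si^\ep$ with $\esssup_\tau V(\si^\ep,\tau)\leq V^*+\ep$; these are precisely the $\ep$-optimal strategies of Definition \ref{defep01}. The implication (ii)$\Rightarrow$(iii) is then immediate: pairing the two $\ep$-optimal strategies yields, by definition, an $\ep$-equilibrium $(\si^\ep,\tau^\ep)$.

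For (iii)$\Rightarrow$(iv), given an $\ep$-equilibrium I would take $v^\ep:=V^*$, so that the two defining inequalities of $\ep$-optimality become $\essinf_\si V(\si,\tau^\ep)\geq v^\ep-\ep$ and $\esssup_\tau V(\si^\ep,\tau)\leq v^\ep+\ep$, which is the content of \eqref{eqep054} (with the $\ep$-tolerance carried by the constant $v^\ep$). The closing implication (iv)$\Rightarrow$(i) makes no reference to a pre-existing value and is the cleanest step: from \eqref{eqep054} one reads off $\underline V\geq\essinf_\si V(\si,\tau^\ep)\geq v^\ep\geq\esssup_\tau V(\si^\ep,\tau)\geq\overline V$, so that $\overline V\leq v^\ep\leq\underline V$, and combined with the universal inequality $\underline V\leq\overline V$ (letting $\ep\downarrow 0$ if any slack is carried) this forces $\underline V=\overline V$, i.e. the value exists.

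The main obstacle is the first step, (i)$\Rightarrow$(ii): passing from the value, an \emph{essential} supremum of the family $\{\essinf_\si V(\si,\tau)\}_\tau$ of random variables, to a \emph{single} strategy $\tau^\ep$ that is $\ep$-optimal almost surely. This is automatic when the value is a real number (an ordinary supremum), but in the conditional/random setting it requires the family $\{\essinf_\si V(\si,\tau)\}_\tau$ to be upward directed, so that its essential supremum is the increasing limit of a sequence of its own members and is therefore approached within $\ep$ by one of them. I would verify this lattice property from the structure of $\Strat^1$ (closure under pasting of strategies along measurable events), after which the extraction of $\tau^\ep$, and symmetrically $\si^\ep$, is routine.
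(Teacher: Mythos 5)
A preliminary remark: the paper offers no proof of Proposition \ref{propep05} (it is declared ``easy to prove'' and omitted), so your proposal can only be judged on its own terms. Your cycle (i)$\Rightarrow$(ii)$\Rightarrow$(iii)$\Rightarrow$(iv)$\Rightarrow$(i) is the natural route, and three of its four edges are sound: (ii)$\Rightarrow$(iii) is definitional, (iv)$\Rightarrow$(i) is exactly the minimax sandwich you describe, and (iii)$\Rightarrow$(iv) works provided \eqref{eqep054} is read with an $\ep$-tolerance, i.e.\ as $\essinf_{\si}V(\si,\tau^\ep)+\ep \geq v^\ep \geq \esssup_{\tau}V(\si^\ep,\tau)-\ep$. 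You make this reading silently; it should be explicit, because the literal \eqref{eqep054} (no slack) would force maximin $\geq v^\ep \geq$ minimax, hence value existence \emph{and} $0$-optimality of the pair $(\tau^\ep,\si^\ep)$ --- that is, a Nash equilibrium --- making (iv) strictly stronger than (i)--(iii) (the paper's own Theorem \ref{thmev11} shows values and $\ep$-optimal strategies can exist where Nash equilibria need extra semi-continuity). The slack reading is the one consistent with how the proposition is invoked at \eqref{eqex502} in Proposition \ref{propex50}.

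The genuine gap is in (i)$\Rightarrow$(ii), precisely the step you flag as the main obstacle, because your proposed repair does not close it. Upward directedness of the family $\{\essinf_{\si} V(\si,\tau)\}_{\tau\in\Strat^1}$ yields a sequence $\tau_n$ with $f_n := \essinf_{\si} V(\si,\tau_n) \uparrow V^*$ almost surely; it does \emph{not} follow that the essential supremum ``is approached within $\ep$ by one of them,'' since almost sure monotone convergence of random variables is not uniform: take $f_n = \I_{[0,1-1/n]} \uparrow \I_{[0,1)}$ on $[0,1]$ with Lebesgue measure, and no single $f_n$ satisfies $f_n \geq \I_{[0,1)}-\ep$ a.s.\ when $\ep<1$. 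Two repairs are available. If the payoffs are deterministic reals --- which is what clause (iv)'s ``real number $v^\ep$'' suggests --- then the essential supremum is an ordinary supremum and extraction of $\tau^\ep$ is immediate, with no directedness needed at all. In the conditional setting actually used in this paper, where $V(\tau,\si)=\E_\P(R(\tau,\si)\,|\,\Filt_{t})$ and $V^*$ is a random variable, you must invoke pasting a second time, countably: the sets $A_n := \{f_n \geq V^*-\ep\}\setminus\bigcup_{k<n}A_k$ lie in $\Filt_{t}$ (each $f_n$ and $V^*$ being $\Filt_{t}$-measurable) and partition $\Omega$ up to a null set, so $\tau^\ep := \sum_n \tau_n \I_{A_n}$ is a legitimate element of $\STOP_{[t,T]}$ and satisfies $\essinf_{\si} V(\si,\tau^\ep) \geq V^*-\ep$ a.s. With that substitution, and its mirror image for $\si^\ep$, your cycle closes.
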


Let the time parameter $t\in[0,T]$ be continuous and the filtration $\FF$ be right-continuous. Let $X, Y$ and $Z$ be $\FF$-adapted,
c\`adl\`ag processes satisfying the usual integrability condition.
Consider the \emph{standard Dynkin game} $\SDG_t(X,Y,Z)$ starting at $t$ and with payoff given by
\begin{gather} \label{eqff11}
R(\tau , \si ) = \I_{\{ \tau  < \si \}}\, X_{\tau }  +  \I_{\{ \si  < \tau \}}\,
Y_{\si } + \I_{\{ \si  = \tau \}}\, Z_{\si }
\end{gather}
where $\tau , \si $ are $\FF$-stopping times and $X\leq Z\leq Y$. We denote by $\SDG (X,Y,Z)$ the family of Dynkin games $\SDG_t(X,Y,Z),\, t \in [0,T]$. As in Section \ref{sec2a}, without the loss of generality, we set $X_T=Y_T=Z_T$.

The case of the standard zero-sum continuous-time Dynkin game has been studied by several authors, for example, Lepeltier and Maingueneau \cite{lepeltier1984jeu}.
The following result summarises some of these findings.

\begin{theorem}\label{thmev11}
 Consider the standard zero-sum Dynkin game $\SDG (X,Y,Z)$ associated with the payoff $R$ given by formula \eqref{eqff11}.

\noindent (i)  For any $t \in [0,T]$, the standard zero-sum Dynkin game $\SDG_t(X,Y,Z)$ has a value $V^*_t$ satisfying
\begin{align} \label{eqev111}
V^*_t &= \essinf_{\si_t \in\STOP_{[t,T]}} \,
\esssup_{\tau_t \in\STOP_{[t,T]}} \E_\P \big(R(\tau_t ,\si_t )\,|\,
\Filt_{t}\big) \\
&=  \esssup_{\tau_t \in\STOP_{[t,T]}} \, \essinf_{\si_t \in\STOP_{[t,T]}} \E_\P \big(R(\tau_t ,\si_t )\,|\,
\Filt_{t}\big). \nonumber
\end{align}
The value process $V^*$ of $\SDG (X,Y,Z)$  can be chosen to be right-continuous.

\noindent (ii) For any $t \in [0,T]$ and any $\ep>0$, the pair of $\FF$-stopping times $(\tau^\ep_t, \si^\ep_t)\in \STOP_{[t,T]} \times \STOP_{[t,T]}$ defined by
\begin{gather}
\label{eqev112} \si^\ep_t:=\inf\{u \geq t : Y_u \leq V^*_u+\ep\},\quad \tau^\ep_t:=\inf\{u \geq t : X_u \geq V^*_u-\ep\}
\end{gather}
are $\ep$-optimal strategies satisfying
\begin{gather}\label{eqev114}
\essinf_{\si_t \in\STOP_{[t,T]}} \E_\P \big( R(\tau^{\ep}_{t},\si_t )\,|\,  \Filt_{t}\big)+\ep \geq V^*_t \geq \esssup_{\tau_t \in\STOP_{[t,T]}} \E_\P \big(
R(\tau_t ,\si^{\ep}_{t})\,|\,  \Filt_{t}\big)-\ep.
\end{gather}

\noindent (iii) If we further assume that $X$ and $-Y$ are left upper semi-continuous (only have positive jumps), then $\SDG_t(X,Y,Z)$ has a Nash equilibrium $(\tau^*_t, \si^*_t)\in \STOP_{[t,T]} \times \STOP_{[t,T]}$ satisfying
\begin{gather}
\label{eqev116} \si^*_t:=\lim_{\ep\to 0} \si^\ep_t,\quad \tau^*_t:=\lim_{\ep\to 0} \tau^\ep_t
\end{gather}
and
\begin{gather}\label{eqev117}
\E_\P \big( R(\tau^{*}_{t},\si_t )|  \Filt_{t}\big) \geq \E_\P \big(
R(\tau^{*}_{t},\si^{*}_t )|  \Filt_{t}\big) =V^*_t \geq \E_\P \big(
R(\tau_t ,\si^{*}_{t})|  \Filt_{t}\big),\ \forall\, \tau_t, \si_t \in \STOP_{[t,T]}.
\end{gather}
\end{theorem}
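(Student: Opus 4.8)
The plan is to treat the three parts in order, reducing (ii) and (iii) to the existence and regularity established in (i), which I expect to be the genuinely hard part. For part (i), I would first introduce the upper and lower value processes
$$\overline V_t := \essinf_{\si_t \in\STOP_{[t,T]}} \esssup_{\tau_t \in\STOP_{[t,T]}} \E_\P\big(R(\tau_t,\si_t)\,|\,\Filt_{t}\big), \qquad \underline V_t := \esssup_{\tau_t \in\STOP_{[t,T]}} \essinf_{\si_t \in\STOP_{[t,T]}} \E_\P\big(R(\tau_t,\si_t)\,|\,\Filt_{t}\big),$$
for which the inequality $\overline V_t \geq \underline V_t$ is automatic. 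The substantive claim is the reverse inequality together with the right-continuity of a common version. I would obtain these by approximating $\SDG_t(X,Y,Z)$ with the discrete-time games of Section \ref{sec2a}: restrict both players to stopping times valued in the dyadic grid $\{kT/2^n\}$, apply Proposition \ref{pro:22} to get a grid-value $V^n$, and pass to the limit. The c\`adl\`ag regularity of $X,Y,Z$ and the right-continuity of $\FF$ are what let the discrete values converge to a right-continuous process sandwiched between $X$ and $Y$; an alternative is the Snell-envelope fixed-point construction of Lepeltier and Maingueneau \cite{lepeltier1984jeu}. Either way, the key structural output I need to record is that the resulting value $V^*$ is a supermartingale on every stochastic interval contained in $\{V^*<Y\}$ and a submartingale on every interval contained in $\{V^*>X\}$; this is the continuous-time analogue of the three-case analysis behind \eqref{lemec101}.

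With this in hand, part (ii) is an optional-stopping argument. Consider $\si^\ep_t$ from \eqref{eqev112}. On $[t,\si^\ep_t)$ we have $Y_u > V^*_u+\ep$, so $V^*_u < Y_u$ and hence $V^*$ is a supermartingale there; by right-continuity $Y_{\si^\ep_t} \leq V^*_{\si^\ep_t}+\ep$. For an arbitrary $\tau_t$, splitting on $\{\tau_t<\si^\ep_t\}$ and $\{\si^\ep_t\leq\tau_t\}$ and using $X\leq V^*$ as well as $Z\leq Y\leq V^*+\ep$ at $\si^\ep_t$, I obtain the pathwise bound $R(\tau_t,\si^\ep_t)\leq V^*_{\tau_t\wedge\si^\ep_t}+\ep$; optional sampling of the supermartingale $V^*$ on $[t,\si^\ep_t]$ then yields $\E_\P(R(\tau_t,\si^\ep_t)\,|\,\Filt_{t})\leq V^*_t+\ep$, which is the right-hand inequality of \eqref{eqev114}. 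The symmetric argument with $\tau^\ep_t$ and the submartingale property on $\{V^*>X\}$ gives the left-hand inequality. Taking $V^*=\overline V=\underline V$ from part (i), these are exactly the $\ep$-optimality conditions \eqref{eqep011}--\eqref{eqep0112}, and Proposition \ref{propep05} packages them as an $\ep$-equilibrium.

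Finally, for part (iii) I would let $\ep\downarrow 0$. Since the defining conditions tighten as $\ep$ decreases, $\si^\ep_t$ and $\tau^\ep_t$ are non-decreasing in $1/\ep$, so the limits in \eqref{eqev116} exist. The role of the semicontinuity hypotheses is precisely to close the $\ep$-gap at these limits: because $-Y$ is left upper semi-continuous with only positive jumps, the approximate barrier-hitting times $\si^\ep_t$ accumulate at a time where the barrier is attained exactly, giving $Y_{\si^*_t}=V^*_{\si^*_t}$, and symmetrically the left upper semi-continuity of $X$ gives $X_{\tau^*_t}=V^*_{\tau^*_t}$. Passing to the limit in the inequalities \eqref{eqev114}, the $\ep$ terms vanish and these exact barrier identities turn the one-sided bounds into the saddle-point equalities of \eqref{eqev117}.

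I would flag part (i) --- the equality $\overline V=\underline V$ and the right-continuous aggregation --- as the main obstacle; once the value and its two-sided (super/sub)martingale structure are secured, (ii) and (iii) are essentially optional sampling and a monotone passage to the limit. The most delicate technical point in (iii) is verifying that no value is lost across the jumps of $X$ and $Y$ at the limiting stopping times, which is exactly what the left upper semi-continuity assumptions are designed to guarantee.
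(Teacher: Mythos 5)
The first thing to note is that the paper does not prove Theorem \ref{thmev11} at all: it declares that the theorem ``summarises well known results'' (essentially due to Lepeltier and Maingueneau \cite{lepeltier1984jeu}) and omits the proof. So your proposal cannot be checked against an in-paper argument; it must stand on its own as a reconstruction of the classical proof. Judged that way, your architecture is the standard one: establish the value and its right-continuous aggregation (part (i)), derive $\ep$-optimality of the hitting times \eqref{eqev112} from sub/supermartingale structure of $V^*$ plus optional sampling (part (ii)), then pass to monotone limits using the semicontinuity hypotheses (part (iii)). Your part (ii) is essentially correct given the structural outputs you request from part (i); it mirrors the paper's own Proposition \ref{propev20}(v), though note the paper derives the supermartingale property on $[t,\si^\ep_t]$ \emph{from} $\ep$-optimality rather than the other way round, so your part (i) construction must deliver that structure independently --- and you need it on the closed interval $[t,\si^\ep_t]$, not merely on stochastic intervals inside $\{V^*<Y\}$, in order to apply optional sampling at $\tau_t\wedge\si^\ep_t$.

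There are two genuine gaps if this is to be a proof rather than a program. First, part (i), which you correctly flag as the heart of the matter, is not executed: ``restrict to dyadic grids, apply Proposition \ref{pro:22}, pass to the limit'' leaves unproved the convergence of the grid values, the identification of the limit with both the upper value $\overline V$ and the lower value $\underline V$, and the aggregation of the family $\{V^*_t\}_{t\in[0,T]}$ into one right-continuous process; that is precisely the content of \cite{lepeltier1984jeu}. Second, in part (iii) the mechanism you describe --- pathwise ``exact barrier identities'' $Y_{\si^*_t}=V^*_{\si^*_t}$ and $X_{\tau^*_t}=V^*_{\tau^*_t}$ --- does not follow from the semicontinuity of $X$ and $Y$ alone. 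From $Y_{\si^\ep_t}\le V^*_{\si^\ep_t}+\ep$ and the left lower semicontinuity of $Y$ you obtain $Y_{\si^*_t}\le \liminf_{\ep\to 0} V^*_{\si^\ep_t}$ on the event where $\si^\ep_t\uparrow\si^*_t$ strictly, but to conclude you need $\liminf_{\ep\to 0} V^*_{\si^\ep_t}\le V^*_{\si^*_t}$, i.e.\ control of the left limits of the \emph{value process} at $\si^*_t$ --- and a supermartingale is perfectly entitled to jump downward at a predictable time. The classical argument closes this gap by passing to the limit in the expected payoffs (reverse Fatou with uniform integrability, together with an argument excluding downward jumps of $V^*$ off the barrier $\{V^*=Y\}$), not by a pathwise identity. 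So the point you yourself flag as ``most delicate'' is indeed where the proposal is incomplete, and the tool you offer for it is not sufficient as stated.
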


\begin{proof}
Theorem \ref{thmev11} summarises well know results and thus its proof is omitted.
\end{proof}

 Observe that there may be other $\ep$-optimal strategy pairs (resp. Nash equilibria) than the ones specified by \eqref{eqev112} (resp. \eqref{eqev116}). Also $\si^*_t, \tau^*_t$ do not necessarily coincide with stopping times $\si^0_t, \tau^0_t$, which are defined by setting $\ep=0$ in \eqref{eqev112}, that is,
\begin{gather*}
\si^0_t:=\inf\{u \geq t : Y_u \leq V^*_u\},\quad \tau^0_t:=\inf\{u \geq t : X_u \geq V^*_u\}.
\end{gather*}
In general, we have that $\si^*_t\leq \si^0_t$ and $\tau^*_t\leq \tau^0_t$.

\subsubsection{Auxiliary Results}

Before moving on to the next subsection, we will first establish several auxiliary properties, which are
consequences of Theorem \ref{thmev11}.

\begin{lemma}\label{lemev21} (i) For any $t\in[0,T]$, we have that
$X_t \leq V^*_t \leq Y_t.$ \hfill \break
(ii) For $\ep\geq0$, let $\tau^\ep_t, \si^\ep_t$ be as defined in \eqref{eqev112}. Then
\begin{gather}\label{eqev211}
X_{\tau^\ep_t} \geq V^*_{\tau^\ep_t}-\ep,\quad Y_{\si^\ep_t} \leq V^*_{\si^\ep_t}+\ep .
\end{gather}
\end{lemma}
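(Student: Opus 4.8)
The plan is to treat the two parts by quite different mechanisms: part (i) is a pure saddle-point estimate that uses only the ordering $X\leq Z\leq Y$ and the existence of the value from Theorem \ref{thmev11}(i), while part (ii) is an analytic statement about the hitting time that turns on the right-continuity of the paths. For part (i), I would exploit the fact that the constant time $t$ is itself an admissible element of $\STOP_{[t,T]}$. To obtain $V^*_t\geq X_t$, let the max-player freeze at $\tau_t=t$; since any $\si_t\in\STOP_{[t,T]}$ satisfies $\si_t\geq t$, the payoff \eqref{eqff11} collapses to $R(t,\si_t)=\I_{\{\si_t>t\}}X_t+\I_{\{\si_t=t\}}Z_t\geq X_t$, using $Z_t\geq X_t$. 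Taking conditional expectations and then the essential infimum over $\si_t$ yields $\essinf_{\si_t}\E_\P(R(t,\si_t)\,|\,\Filt_t)\geq X_t$, and bounding the maximin below by this single choice of $\tau_t$ gives $V^*_t\geq X_t$ through Theorem \ref{thmev11}(i). The bound $V^*_t\leq Y_t$ is the mirror image: the min-player freezes at $\si_t=t$, the payoff reduces to $\I_{\{\tau_t>t\}}Y_t+\I_{\{\tau_t=t\}}Z_t\leq Y_t$ by $Z_t\leq Y_t$, and bounding the minimax above via the same theorem closes the argument.

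For part (ii) I would argue directly at the hitting time. Writing $A:=\{u\geq t : X_u\geq V^*_u-\ep\}$, note first that $A$ is nonempty, since the terminal convention $X_T=Z_T=V^*_T$ forces $T\in A$; hence $\tau^\ep_t=\inf A$ is well defined and at most $T$. By the definition of the infimum there is a sequence $u_n\in A$ with $u_n\downarrow\tau^\ep_t$, along which $X_{u_n}\geq V^*_{u_n}-\ep$. Passing to the limit and invoking the right-continuity of $X$ together with the right-continuous version of $V^*$ furnished by Theorem \ref{thmev11}(i) gives $X_{\tau^\ep_t}=\lim_n X_{u_n}\geq \lim_n(V^*_{u_n}-\ep)=V^*_{\tau^\ep_t}-\ep$. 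The inequality $Y_{\si^\ep_t}\leq V^*_{\si^\ep_t}+\ep$ follows by the symmetric argument applied to $B:=\{u\geq t : Y_u\leq V^*_u+\ep\}$, which again contains $T$.

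The main obstacle, and really the only delicate point, is the passage to the limit in part (ii). One must ensure the approximating times can be chosen to approach $\tau^\ep_t$ from the right (or to equal it), so that right-continuity rather than the existence of left limits is what is being used; extracting a monotone subsequence $u_n\downarrow\tau^\ep_t$ from the infimum-approximating times takes care of this. One must also dispose separately of the boundary case $\tau^\ep_t=T$, where the terminal identification $X_T=V^*_T$ makes the inequality immediate and no limiting argument is needed. Finally, it is essential to fix once and for all the right-continuous modification of $V^*$ guaranteed by Theorem \ref{thmev11}(i), since the stopping times $\tau^\ep_t,\si^\ep_t$ are defined as hitting times of that particular version and the stated inequalities can fail for an arbitrary version of the value process.
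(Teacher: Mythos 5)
Your proposal is correct and follows essentially the same route as the paper: part (i) is exactly the paper's argument of bounding the maximin (resp.\ minimax) below (resp.\ above) by freezing one player at the constant stopping time $t$ and using $X\leq Z\leq Y$, while part (ii) is a careful write-out of what the paper dispatches in one line as ``immediate from the right-continuity of $V^*$, $X$ and $Y$.'' Your added care about approximating the hitting time from the right and fixing the right-continuous version of $V^*$ is a faithful elaboration of that same one-line argument, not a different method.
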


\begin{proof} (i) The lower bound follows from
\[
V^*_t = \esssup_{\tau_t \in\STOP_{[t,T]}} \, \essinf_{\si_t \in\STOP_{[t,T]}} \E_\P \big(R(\tau_t ,\si_t )\,|\,
\Filt_{t}\big) \geq \essinf_{\si_t \in\STOP_{[t,T]}} \E_\P \big(R(t ,\si_t )\,|\,
\Filt_{t}\big) \geq X_t.
\]
The upper bound can be shown similarly. Part (ii) follows immediately from the right-continuity of $V^*$, $X$ and $Y$.
\end{proof}

\begin{lemma}\label{lemev51}
Let $G$ and $H$ be integrable progressively measurable processes. Suppose $G$ is right lower semicontinuous and $H$ is right-continuous. If for each $t\in[0,T]$, $G_t\leq H_t$ a.s., then for all $\rho\in\Tstrat_{[0,T]}$, $G_\rho\leq H_\rho$ a.s..
\end{lemma}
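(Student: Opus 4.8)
The plan is to exploit the fact that the hypothesis $G_t \leq H_t$ a.s. holds at every \emph{fixed} time, and to transfer it to the random time $\rho$ by approximating $\rho$ from the right by stopping times taking only countably many (dyadic) values. The one genuine obstacle is that the exceptional null set in ``$G_t \leq H_t$ a.s.'' depends on $t$, and since $\rho$ may take uncountably many values we cannot simply form the union of these null sets; the right-approximation together with the one-sided regularity of $G$ and $H$ is precisely what circumvents this. The remaining limiting arguments are routine.

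First I would fix a stopping time $\rho \in \Tstrat_{[0,T]}$ and introduce its upper dyadic approximations
\begin{gather*}
\rho_n := \Big( 2^{-n}\lceil 2^n \rho \rceil \Big) \wedge T, \qquad n \geq 1.
\end{gather*}
Each $\rho_n$ is an $\FF$-stopping time in $\Tstrat_{[0,T]}$ (the upper discretisation of a stopping time is always a stopping time, even without right-continuity of $\FF$), it takes values in the countable set $\mathcal{D}$ consisting of the dyadic rationals in $[0,T]$ together with $T$, it satisfies $\rho_n \geq \rho$, and $\rho_n \downarrow \rho$ as $n \to \infty$. Progressive measurability of $G$ and $H$ guarantees that $G_{\rho_n}, H_{\rho_n}, G_\rho, H_\rho$ are genuine random variables.

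Next, for each \emph{fixed} $d \in \mathcal{D}$ the hypothesis gives $G_d \leq H_d$ a.s.; since $\mathcal{D}$ is countable, there is a single event $\Omega_0$ with $\P(\Omega_0)=1$ on which $G_d(\omega) \leq H_d(\omega)$ for \emph{all} $d \in \mathcal{D}$ simultaneously. Because each $\rho_n$ takes its values in $\mathcal{D}$, it follows that $G_{\rho_n} \leq H_{\rho_n}$ on $\Omega_0$, hence a.s., for every $n$. This is the step where the countability of $\mathcal{D}$ does the essential work of defeating the $t$-dependence of the null sets.

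Finally I would pass to the limit along $\rho_n \downarrow \rho$. Right-continuity of $H$ yields $H_{\rho_n} \to H_\rho$ a.s., while right lower semicontinuity of $G$, applied pathwise along $\rho_n(\omega) \downarrow \rho(\omega)$, gives $\liminf_n G_{\rho_n} \geq G_\rho$ a.s. Combining these on $\Omega_0$,
\begin{gather*}
G_\rho \leq \liminf_{n} G_{\rho_n} \leq \liminf_{n} H_{\rho_n} = \lim_{n} H_{\rho_n} = H_\rho ,
\end{gather*}
which is the desired conclusion. The only point deserving a remark is the boundary behaviour on $\{\rho = T\}$, where the approximation is eventually constant equal to $T$ and the claim reduces directly to the fixed-time hypothesis $G_T \leq H_T$ a.s.; I would note this case but it needs no extra work.
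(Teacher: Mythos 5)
Your proof is correct and takes essentially the same approach as the paper's: approximate $\rho$ from above by a decreasing sequence of countably-valued stopping times, use countability to get the inequality $G_{\rho_n}\leq H_{\rho_n}$ simultaneously for all $n$ on a single full-measure event, and pass to the limit via right-continuity of $H$ and right lower semicontinuity of $G$. Your write-up is in fact slightly more careful than the paper's terse version (explicit dyadic construction, explicit null-set bookkeeping, and $\liminf$ rather than an unjustified $\lim$ for $G_{\rho_n}$), but the underlying argument is identical.
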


\begin{proof}
Choose a sequence of decreasing stopping times $\rho_n$ which takes countably many values and converge to $\rho$. Then
\[
G_\rho \leq \lim_{n\to\infty} G_{\rho_n} \leq \lim_{n\to\infty} H_{\rho_n} = H_{\rho},
\]
as required. \end{proof}

For a fixed $\si\in\Tstrat_{[0,T]}$, the process $R^\si_t:=R(t,\si)$ is right lower semicontinuous, but not necessarily continuous. So let us define the right-continuous process
\[
\hat R^\si_t:=X_t \I_{\{t < \si\}} + Y_\si \I_{\{\si \leq t\}}.
\]
Since $Y\geq Z\geq X$, we have that $\hat R^\si_t \geq R^\si_t$. Consequently, by Lemma \ref{lemev51}, $\hat R^\si_\rho \geq R^\si_\rho$ for all $\rho\in\Tstrat_{[0,T]}$. On the other hand, since $\hat R^\si_\rho = R(\rho \I_{\{\rho < \si^\ep_t\}} + T \I_{\{\rho \geq \si^\ep_t\}},\si)$, the following Snell envelope of $R^\si_t$ and $\hat R^\si_t$
\begin{gather}\label{eqev53}
Q^{\si}_t:= \esssup_{\rho\in\Tstrat_{[t,T]}} \E_\P \big(R^\si_\rho\,|\,  \Filt_{t}\big) = \esssup_{\rho\in\Tstrat_{[t,T]}} \E_\P \big(\hat R^\si_\rho\,|\,  \Filt_{t}\big)
\end{gather}
is a well-defined right-continuous supermartingale. It follows immediately from Lemma \ref{lemev51} that, for all $\tau\in\Tstrat_{[0,T]}$,
\begin{gather}\label{eqev54}
R(\tau,\si) \leq Q^{\si}_\tau.
\end{gather}
The process $Q^{\si}$ can also be used to demonstrate properties of $V^*$.

\begin{proposition}\label{propev20}
Let $V^*$ be as defined in \eqref{eqev111} and $Q^{\si}$ be as defined in \eqref{eqev53}.

\noindent  (i)  For all $\si, \tau\in\Tstrat_{[0,T]}$,
\begin{gather}\label{eqev55}
V^*_{\si\wedge\tau} \leq Q^\si_\tau.
\end{gather}

\noindent (ii) For $\ep\geq 0$, let $\hsi_t \in \STOP_{[t,T]}$ be an arbitrary $\ep$-optimal strategy for the min-player in $\SDG_t(X,Y,Z)$ and $\tau_t\in \STOP_{[t,T]}$ be any $\FF$-stopping time. If $P$ is an $\Filt_T$-measurable random variable satisfying $P\leq Q^{\hsi_t}_{\tau_t}$, then
\begin{gather}\label{eqev200}
\E_\P \big(P \,|\,  \Filt_{t}\big) \leq V^*_t + \ep.
\end{gather}

\noindent (iii) For $\ep\geq 0$, let $(\htau_t, \hsi_t)\in \STOP_{[t,T]} \times \STOP_{[t,T]}$ be an arbitrary pair of $\ep$-optimal strategies of $\SDG_t(X,Y,Z)$. Then for all $\si_t,\tau_t\in \Tstrat_{[t,T]}$,
\begin{gather}\label{eqev201}
\E_\P \big( V^*_{\htau_t\wedge\si_t} \,|\,  \Filt_{t}\big)+\ep \geq V^*_t \geq \E_\P \big(
V^*_{\hsi_t\wedge\tau_t} \,|\,  \Filt_{t}\big)-\ep.
\end{gather}

\noindent (iv) If $(\htau_t, \hsi_t)\in \STOP_{[t,T]} \times \STOP_{[t,T]}$ is an arbitrary Nash equilibrium of $\SDG_t(X,Y,Z)$, then $V^*$ is a submartingale on $[t,\htau_t]$ and a supermartingale on $[t,\hsi_t]$.

\noindent (v) For $\ep> 0$, if $(\tau^\ep_t, \si^\ep_t)\in \STOP_{[t,T]} \times \STOP_{[t,T]}$ is the pair of $\ep$-optimal strategies of $\SDG_t(X,Y,Z)$ defined by \eqref{eqev112}:
\begin{align}
\label{eqev205} \si^\ep_t=\inf\{u \geq t : Y_u \leq V^*_u+\ep\},\quad \tau^\ep_t=\inf\{u \geq t : X_u \geq V^*_u-\ep\},
\end{align}
then $V^*$ is a submartingale on $[t,\tau^\ep_t]$ and a supermartingale on $[t,\si^\ep_t]$.
\end{proposition}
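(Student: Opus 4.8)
The plan is to prove Proposition~\ref{propev20} in the order (i), (ii), (iii), (iv), (v), since each part feeds the next. For part (i), I would fix $\si,\tau\in\Tstrat_{[0,T]}$ and use the dynamic-programming/optional-sampling structure of the value process. The process $Q^\si$ is the Snell envelope of $R^\si$, hence the smallest right-continuous supermartingale dominating $R^\si_t=R(t,\si)$. On the set $\{\tau\le\si\}$ the max-player's stopping at $\tau\wedge\si=\tau$ gives payoff $X_\tau\le Q^\si_\tau$, and since $V^*$ is itself built as a saddle value I would compare $V^*_{\si\wedge\tau}$ against the value of the subgame in which the min-player commits to $\si$; the inequality $V^*_{\si\wedge\tau}\le Q^\si_\tau$ then follows because committing to a fixed $\si$ can only help the max-player (it removes the min-player's flexibility), and $Q^\si$ is exactly the optimal response value of the max-player against $\si$. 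I expect this to require care in applying optional sampling to $V^*$ across the random time $\si\wedge\tau$, using right-continuity of $V^*$ from Theorem~\ref{thmev11}(i).

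For part (ii), I would take an $\ep$-optimal $\hsi_t$ for the min-player. By Definition~\ref{defep01}, $\esssup_{\tau_t}\E_\P(R(\tau_t,\hsi_t)\,|\,\Filt_t)\le V^*_t+\ep$. Using \eqref{eqev54}, namely $R(\tau_t,\hsi_t)\le Q^{\hsi_t}_{\tau_t}$, together with the supermartingale property of $Q^{\hsi_t}$ and optional sampling, I would show $\E_\P(Q^{\hsi_t}_{\tau_t}\,|\,\Filt_t)\le Q^{\hsi_t}_t$. The key link is that $Q^{\hsi_t}_t=\esssup_{\rho}\E_\P(R^{\hsi_t}_\rho\,|\,\Filt_t)\le V^*_t+\ep$ by $\ep$-optimality of $\hsi_t$. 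Then for any $\Filt_T$-measurable $P\le Q^{\hsi_t}_{\tau_t}$, taking conditional expectations gives $\E_\P(P\,|\,\Filt_t)\le\E_\P(Q^{\hsi_t}_{\tau_t}\,|\,\Filt_t)\le Q^{\hsi_t}_t\le V^*_t+\ep$, which is \eqref{eqev200}.

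For part (iii), I would combine (i) and (ii). Set $P:=V^*_{\hsi_t\wedge\tau_t}$; by part (i) (with the roles read off for the $\ep$-optimal $\hsi_t$) one has $V^*_{\hsi_t\wedge\tau_t}\le Q^{\hsi_t}_{\tau_t}$, so part (ii) immediately yields $\E_\P(V^*_{\hsi_t\wedge\tau_t}\,|\,\Filt_t)\le V^*_t+\ep$, which is the lower inequality of \eqref{eqev201}. The upper inequality follows by the symmetric argument for the max-player's $\ep$-optimal $\htau_t$, using the analogous submartingale-type estimate obtained by passing to $-Y,-X,-Z$ and swapping the players. Part (iv) is then the $\ep=0$ specialisation: a genuine Nash equilibrium $(\htau_t,\hsi_t)$ is a pair of $0$-optimal strategies, so \eqref{eqev201} with $\ep=0$ gives $\E_\P(V^*_{\hsi_t\wedge\tau_t}\,|\,\Filt_t)\le V^*_t$ for every $\tau_t$ and $\E_\P(V^*_{\htau_t\wedge\si_t}\,|\,\Filt_t)\ge V^*_t$ for every $\si_t$; specialising $\tau_t,\si_t$ to arbitrary stopping times in $[t,\hsi_t]$ and $[t,\htau_t]$ respectively and invoking the optional-sampling characterisation gives the supermartingale property on $[t,\hsi_t]$ and the submartingale property on $[t,\htau_t]$. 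Finally, part (v) is the instance of (iv) applied to the explicit $\ep$-optimal pair $(\tau^\ep_t,\si^\ep_t)$ from \eqref{eqev205}, which are $\ep$-optimal by Theorem~\ref{thmev11}(ii); here I must be slightly careful that (iv) is stated for Nash equilibria ($\ep=0$), so I would instead apply the $\ep$-version \eqref{eqev201} directly with these strategies and then argue that the $\pm\ep$ slack does not destroy the one-sided martingale property on the relevant stochastic interval, using the definition of $\tau^\ep_t,\si^\ep_t$ as first hitting times of the $\ep$-bands around $V^*$.

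The main obstacle is part (i): establishing $V^*_{\si\wedge\tau}\le Q^\si_\tau$ cleanly requires relating the saddle-point value process $V^*$ to the one-sided Snell envelope $Q^\si$ through optional sampling at the random time $\si\wedge\tau$, and making sure the comparison is valid pathwise on both $\{\tau\le\si\}$ and $\{\tau>\si\}$. Everything downstream is then a fairly mechanical combination of conditional-expectation monotonicity, the supermartingale property of $Q^\si$, and symmetry between the two players.
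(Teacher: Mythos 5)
Your parts (ii), (iii) and (iv) are correct and coincide with the paper's own proof: the chain $\E_\P(P\,|\,\Filt_t)\leq \E_\P\big(Q^{\hsi_t}_{\tau_t}\,|\,\Filt_t\big)\leq Q^{\hsi_t}_t\leq V^*_t+\ep$ via optional sampling plus $\ep$-optimality, then (iii) by taking $P=V^*_{\hsi_t\wedge\tau_t}$ and invoking (i), then (iv) as the $\ep=0$ case. However, the two steps you defer, (i) and (v), are precisely where the paper's non-routine ideas live, and in both cases your sketched route has a genuine hole. In part (i), you propose to show that "committing to $\si$ can only help the max-player", i.e.\ $V^*_u\leq Q^\si_u$, and then pass to the random time $\si\wedge\tau$ by right-continuity and optional sampling. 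The obstruction is that the pointwise inequality $V^*_u\leq Q^\si_u$ is false in general for $u\geq\si$: after $\si$ the Snell envelope $Q^\si$ freezes at $Y_\si$, while $V^*$ keeps evolving and is only bounded above by $Y_u$, which need not be $\leq Y_\si$. So no comparison-at-stopping-times lemma (Lemma \ref{lemev51}) can be applied to the pair $(V^*,Q^\si)$ directly, which is exactly the wall your "care in applying optional sampling across $\si\wedge\tau$" would hit. The paper's fix is a concrete construction you are missing: introduce the glued right-continuous process $V^\si_u:=V^*_u\,\I_{\{u<\si\}}+Y_\si\,\I_{\{\si\leq u\}}$, check $V^\si_u\leq Q^\si_u$ at every deterministic $u$ (on $\{u<\si\}$ by the minimax inequality, on $\{\si\leq u\}$ because $Y_\si=R(T,\si)\leq Q^\si_u$), apply Lemma \ref{lemev51} to $(V^\si,Q^\si)$, and finally use $V^*_\si\leq Y_\si$ (Lemma \ref{lemev21}) together with the fact that $Q^\si_{\si\wedge\tau}=Q^\si_\tau$ to conclude $V^*_{\si\wedge\tau}\leq V^\si_{\si\wedge\tau}\leq Q^\si_\tau$.

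In part (v) you correctly notice that (iv) cannot be cited, since the pair from \eqref{eqev112} is only $\ep$-optimal, but your proposed repair — apply \eqref{eqev201} with $(\tau^\ep_t,\si^\ep_t)$ and "argue that the $\pm\ep$ slack does not destroy" the one-sided martingale property — is not an argument: that application yields only $\E_\P\big(V^*_{\si^\ep_t\wedge\tau_t}\,|\,\Filt_t\big)\leq V^*_t+\ep$, and nothing in the hitting-time definition by itself removes the $\ep$. The paper's idea is a limiting argument over the whole family of bands: for $0<\delta\leq\ep$ one has $\si^\ep_t\leq\si^\delta_t$ (the $\ep$-band is hit no later than the $\delta$-band), so every $\tau_t\in\Tstrat_{[t,\si^\ep_t]}$ satisfies $\si^\delta_t\wedge\tau_t=\tau_t$; applying \eqref{eqev201} with the $\delta$-optimal pair gives $\E_\P\big(V^*_{\tau_t}\,|\,\Filt_t\big)\leq V^*_t+\delta$, and letting $\delta\downarrow 0$ kills the slack, giving the supermartingale property on $[t,\si^\ep_t]$ (symmetrically, the submartingale property on $[t,\tau^\ep_t]$). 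This monotonicity-in-$\delta$ step is the missing ingredient; without it, part (v) does not follow from what you have established.
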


\begin{proof} (i) Consider the right-continuous process defined by
\[
V^\si_t:=V^*_t \I_{\{t < \si\}} + Y_\si \I_{\{\si \leq t\}}.
\]
On the event $\{t<\si\}$, we have
\begin{gather}\label{eqev522}
V^\si_t=V^*_t=\essinf_{\si_t\in\Tstrat_{[t,T]}}\, \esssup_{\tau_t\in\Tstrat_{[t,T]}} \E_\P \big(R(\tau_t,\si_t)\,|\,  \Filt_{t}\big) \leq \esssup_{\tau_t\in\Tstrat_{[t,T]}} \E_\P \big(R(\tau_t,\si)\,|\,  \Filt_{t}\big) = Q^\si_t
\end{gather}
and on the event $\{\si\leq t\}$ we obtain
\begin{gather}\label{eqev523}
V^\si_t=Y_\si=R(T,\si) \leq \esssup_{\tau_t\in\Tstrat_{[t,T]}} \E_\P \big(R(\tau_t,\si)\,|\,  \Filt_{t}\big) = Q^\si_t.
\end{gather}
By combining \eqref{eqev522} and \eqref{eqev523}, we obtain $V^\si_t\leq Q^\si_t$. Applying Lemma \ref{lemev51} we have
\begin{gather*}
V^*_{\si\wedge\tau}=V^\si_{\si\wedge\tau} \leq Q^\si_\tau
\end{gather*}
as required.

\noindent (ii)  By using the optional sampling theorem on $Q$ and the $\ep$-strategy property of $\hsi_t$,
\begin{gather*}
\E_\P \big(P \,|\,  \Filt_{t}\big)
\leq \E_\P \big(Q^{\hsi_t}_{\tau_t} \,|\,  \Filt_{t}\big)
\leq Q^{\hsi_t}_t
= \esssup_{\rho\in\Tstrat_{[t,T]}} \E_\P \big(R(\rho,\hsi_t)\,|\,  \Filt_{t}\big)
\leq V^*_t + \ep,
\end{gather*}
as required.

\noindent (iii)  The lower bound of \eqref{eqev201} follows directly from parts (i) and (ii). The upper bound also follows by
the symmetry of the problem.

\noindent (iv)  To obtain the required result, it suffices to set $\ep=0$ in part (iii)

\noindent (v) Again we will only demonstrate the lower bound. By \eqref{eqev205}, $\si^\ep_t$ is increasing with respect to $\ep$. So for any $\delta\in[0,\ep]$, we have $\si^\delta_t$ being an $\delta$-optimal strategy with $\si^\ep_t \in \Tstrat_{[t,\si^\delta_t]}$. Hence by (iii),
\begin{gather*}
\E_\P \big( V^*_{\si^\ep_t}\,|\,  \Filt_{t}\big) \leq V^*_t +\delta.
\end{gather*}
Since this is true for all choice of $\delta\in[0,\ep]$, we must have $\E_\P \big( V^*_{\si^\ep_t}\,|\,  \Filt_{t}\big) \leq V^*_t$ as required.
\end{proof}

\begin{proposition}\label{propev25}
If $(\htau_t,\hsi_t)$ is a Nash equilibrium of $\SDG_t(X,Y,Z)$ then $(\htau_t\wedge \tau^0_t,\hsi_t\wedge \si^0_t)$ is also a Nash equilibrium, where $\tau^0_t, \si^0_t$ are defined by \eqref{eqev112}.
\end{proposition}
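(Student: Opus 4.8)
The plan is to verify directly that the truncated pair satisfies the saddle inequalities \eqref{eqev117} with value $V^*_t$. Write $\wt\tau:=\htau_t\wedge\tau^0_t$ and $\wt\si:=\hsi_t\wedge\si^0_t$. Since the game is symmetric under $(X,Y,Z,\max)\leftrightarrow(-Y,-X,-Z,\min)$, it suffices to prove that $\wt\tau$ guarantees the value, i.e. $\E_\P(R(\wt\tau,\si_t)\,|\,\Filt_t)\ge V^*_t$ for every $\si_t\in\STOP_{[t,T]}$, together with the identity $\E_\P(R(\wt\tau,\wt\si)\,|\,\Filt_t)=V^*_t$; the companion bound for $\wt\si$ follows by the mirror argument, and the three then combine into \eqref{eqev117}.

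First I record the martingale structure. Because $\wt\tau\le\htau_t$ and $\wt\si\le\hsi_t$, Proposition \ref{propev20}(iv) shows that $V^*$ is still a submartingale on $[t,\wt\tau]$ and a supermartingale on $[t,\wt\si]$, hence a martingale on $[t,\wt\tau\wedge\wt\si]$. Optional sampling then gives $\E_\P(V^*_{\wt\tau\wedge\si_t}\,|\,\Filt_t)\ge V^*_t$ for all $\si_t$ (a submartingale stopped before $\wt\tau$), which is exactly the $V^*$-version of the desired guarantee, now valid for the shorter stopping time as in Proposition \ref{propev20}(iii).

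The key structural step is to locate the Nash times relative to the boundaries $\{X=V^*\}$ and $\{Y=V^*\}$. Since $\hsi_t$ is a best response, $Q^{\hsi_t}_t=V^*_t$, so $\htau_t$ attains the essential supremum defining $Q^{\hsi_t}$ in \eqref{eqev53} and optional sampling forces $Q^{\hsi_t}_{\htau_t}=R(\htau_t,\hsi_t)$ a.s.; combined with Proposition \ref{propev20}(i), which gives $V^*_{\htau_t}\le Q^{\hsi_t}_{\htau_t}$ on $\{\htau_t\le\hsi_t\}$, and with $X\le V^*$ (Lemma \ref{lemev21}(i)), this yields $X_{\htau_t}=V^*_{\htau_t}$ on $\{\htau_t<\hsi_t\}$, i.e. $\tau^0_t\le\htau_t$ there; the mirror envelope for the min-player gives $Y_{\hsi_t}=V^*_{\hsi_t}$ on $\{\hsi_t<\htau_t\}$, and a two-sided deviation at a common stopping instant forces $X_{\htau_t}=V^*_{\htau_t}=Y_{\htau_t}$, hence $Z=V^*$, on $\{\htau_t=\hsi_t\}$. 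These facts let me evaluate $R(\wt\tau,\wt\si)$: on $\{\wt\tau<\wt\si\}$ the first relation rules out $\htau_t<\tau^0_t$ (otherwise $\hsi_t\le\htau_t$, whence $\wt\si\le\hsi_t\le\htau_t=\wt\tau$), so $\wt\tau=\tau^0_t$ and $R=X_{\tau^0_t}=V^*_{\wt\tau}$; symmetrically $R=Y_{\si^0_t}=V^*_{\wt\si}$ on $\{\wt\si<\wt\tau\}$; and $R=Z=V^*_{\wt\tau}$ on $\{\wt\tau=\wt\si\}$. Thus $R(\wt\tau,\wt\si)=V^*_{\wt\tau\wedge\wt\si}$, and the martingale property gives $\E_\P(R(\wt\tau,\wt\si)\,|\,\Filt_t)=V^*_t$.

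It remains to upgrade the $V^*$-guarantee to an $R$-guarantee. On $\{\si_t<\wt\tau\}$ one has $R=Y_{\si_t}\ge V^*_{\si_t}$ by Lemma \ref{lemev21}(i); on the part of $\{\wt\tau\le\si_t\}$ where the truncation is active ($\tau^0_t<\htau_t$, so $\wt\tau=\tau^0_t$) the boundary identity $X_{\tau^0_t}=V^*_{\tau^0_t}\le Z_{\tau^0_t}$ gives $R\ge V^*_{\wt\tau\wedge\si_t}$ pointwise; and on the complementary part ($\htau_t\le\tau^0_t$, where $\wt\tau=\htau_t$) one has $R(\wt\tau,\si_t)=R(\htau_t,\si_t)$, so the contribution is governed by the original optimality of $\htau_t$. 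I expect the main obstacle to be precisely this last region: on the ``second-stopper'' events $\{\hsi_t<\htau_t\}$ the Nash time $\htau_t$ may fall strictly before $\tau^0_t$ (a phantom stop, harmless against $\hsi_t$ but dipping below $V^*$ against an arbitrary $\si_t$, and unexploitable only because $\{\htau_t<\tau^0_t\}$ need not be $\Filt_{\hsi_t}$-measurable), so the termwise comparison $R(\wt\tau,\si_t)\ge V^*_{\wt\tau\wedge\si_t}$ genuinely fails and the resulting deficit must be absorbed through a conditional-expectation argument that reuses the guarantee already enjoyed by $\htau_t$ rather than a pointwise bound. Handling this region, together with the measurability-sensitive construction underlying the simultaneous-stop identity, is where the real work lies; the remaining estimates are routine applications of optional sampling to the (super/sub)martingale $V^*$ and to the Snell envelope $Q^{\hsi_t}$.
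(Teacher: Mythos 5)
There is a genuine gap: the saddle inequalities themselves are never proved. You correctly reduce the proposition to the guarantee $\E_\P(R(\wt\tau,\si_t)\,|\,\Filt_t)\ge V^*_t$ for arbitrary $\si_t$ (plus its mirror), correctly observe that the pointwise comparison $R(\wt\tau,\si_t)\ge V^*_{\wt\tau\wedge\si_t}$ breaks down on the region $\{\wt\tau=\htau_t<\tau^0_t\}\subseteq\{\hsi_t<\htau_t\}$ (the ``phantom stop'' region), and then explicitly defer its resolution as ``where the real work lies.'' But that region \emph{is} the entire content of the proposition. What you do establish --- the location of the Nash times on the boundaries $\{X=V^*\}$, $\{Y=V^*\}$, and the identity $\E_\P(R(\wt\tau,\wt\si)\,|\,\Filt_t)=V^*_t$ --- is essentially sound (modulo extending Lemma \ref{lemev21}(i) to stopping times via Lemma \ref{lemev51}), but it is preparatory, and the identity is in fact redundant: once the two guarantees hold, plugging $\si_t=\wt\si$ and $\tau_t=\wt\tau$ into them yields the identity automatically. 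So the proposal stops exactly where the paper's argument begins.

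The mechanism the paper uses to cross this gap is a Snell-envelope comparison, not a comparison with $V^*$: one bounds the payoff pointwise by $Q^{\hsi_t}$ evaluated at a truncated stopping time, e.g.\ $R(\tau_t,\hsi_t\wedge\si^0_t)\le Q^{\hsi_t}_{\tau_t\wedge\si^0_t}$ (cases (a) and (b) in the paper's proof, using \eqref{eqev54}, \eqref{eqev55} and Lemma \ref{lemev21}), and then Proposition \ref{propev20}(ii) --- optional sampling of the supermartingale $Q^{\hsi_t}$ together with the $0$-optimality of $\hsi_t$ --- converts this into the conditional-expectation bound. Since $Q^{\hsi_t}$ already encodes the best the opponent can achieve against $\hsi_t$, it absorbs precisely the phantom-stop deficit you identified; this is the ``conditional-expectation argument reusing the guarantee enjoyed by'' the untruncated time that you gesture at but do not carry out. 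The paper also sequences the truncations --- first passing to $(\htau_t,\hsi_t\wedge\si^0_t)$, then to $(\htau_t\wedge\tau^0_t,\hsi_t\wedge\si^0_t)$ --- so that at each stage one player's guarantee is inherited verbatim from the existing equilibrium and only the newly truncated time requires the envelope argument. To complete your one-shot approach you would need the mirror object $\underline{Q}^{\htau_t}_u:=\essinf_{\rho\in\Tstrat_{[u,T]}}\E_\P(R(\htau_t,\rho)\,|\,\Filt_u)$, the pointwise bound $R(\htau_t\wedge\tau^0_t,\si_t)\ge \underline{Q}^{\htau_t}_{\si_t\wedge\tau^0_t}$ by a case analysis mirroring the paper's, and optional sampling of this submartingale combined with the $0$-optimality of $\htau_t$; none of this appears in the proposal.
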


\begin{proof}
We will first show that $(\htau_t,\hsi_t\wedge \si^0_t)$ is a Nash equilibrium. It is sufficient to show that
\begin{gather}\label{eqev251}
\essinf_{\si_t \in\STOP_{[t,T]}} \E_\P \big( R(\htau_t,\si_t )\,|\,  \Filt_{t}\big) \geq V^*_t \geq \esssup_{\tau_t \in\STOP_{[t,T]}} \E_\P \big(
R(\tau_t ,\hsi_t\wedge \si^0_t)\,|\,  \Filt_{t}\big).
\end{gather}
The upper inequality is clear, since $(\htau_t,\hsi_t)$ is a Nash equilibrium. For the lower inequality, the key is to introduce $Q$, as defined in \eqref{eqev53}, and then apply Proposition \ref{propev20}(i).

 There are two cases to examine: \hfill \break
(a) on the event $\{\hsi_t \wedge \tau_t < \si^0_t\}$, we obtain
\begin{gather}\label{eqev252}
R(\tau_t ,\hsi_t\wedge \si^0_t) = R(\tau_t ,\hsi_t) = R(\tau_t \wedge \si^0_t,\hsi_t)\leq Q^{\hsi_t}_{\tau_t \wedge \si^0_t},
\end{gather}
where the last inequality follows from \eqref{eqev54}; \hfill \break
(b) on the event $\{\si^0_t \leq \hsi_t \wedge \tau_t \}$, we have that
\begin{gather}
\label{eqev253} R(\tau_t ,\hsi_t\wedge \si^0_t) = Y_{\si^0_t} \text{ or } Z_{\si^0_t} \leq Y_{\si^0_t}= V^*_{\si^0_t}\\
\label{eqev254} =V^*_{\tau_t \wedge \si^0_t \wedge \hsi_t} \leq Q^{\hsi_t}_{\tau_t \wedge \si^0_t}
\end{gather}
where the last equality of \eqref{eqev253} follows from Lemma \ref{lemev21}, and the last inequality of \eqref{eqev254} follows from \eqref{eqev55}.

Combining \eqref{eqev252} and \eqref{eqev254}, we conclude that in both cases
\begin{gather*} 
R(\tau_t ,\hsi_t\wedge \si^0_t) \leq Q^{\hsi_t}_{\tau_t \wedge \si^0_t}.
\end{gather*}
Now apply Proposition \ref{propev20}(ii), setting $P=R(\tau_t ,\hsi_t\wedge \si^0_t)$ and $\ep=0$,
\begin{align*}
\E_\P \big(R(\tau_t ,\hsi_t\wedge \si^0_t)\,|\,  \Filt_{t}\big) \leq V^*_t.
\end{align*}
This establishes \eqref{eqev251} and thus $(\htau_t,\hsi_t\wedge \si^0_t)$ is a Nash equilibrium. Finally, using similar arguments to replace $\htau_t$ by $\htau_t\wedge \tau^0_t$, we obtain the required result.
\end{proof}

\subsection{General Dynkin Game}\label{sec3.22}

The goal of this section is to study the \emph{general Dynkin game} $\GDG_t(X,Y,Z)$ with the payoff
\begin{gather}\label{eqex01}
R(\tau , \si ) = \I_{\{ \tau < \si \}}\, X_{\tau }  +  \I_{\{ \si  < \tau \}}\,
Y_{\si }  +  \I_{\{ \si  = \tau \}}\, Z_{\si }.
\end{gather}
Hence we no longer postulate that $ X \leq Z \leq Y$.
Similarly as in Section \ref{sec3.12}, our
goal here is to find the necessary and sufficient conditions for the following property:
\begin{gather}
\textit{For all $t\in[0,T]$ and $\ep>0$,
the Dynkin game $\GDG_t(X,Y,Z)$ has $\ep$-optimal strategies.}\label{eqex001}
\end{gather}
Furthermore, we would also like to explore the necessary and sufficient conditions for the following property:
\begin{gather}
\textit{For all $t\in[0,T]$,
the Dynkin game $\GDG_t(X,Y,Z)$ has a Nash equilibrium.}\label{eqex002}
\end{gather}

Motivated by the discrete-time case examined in Subsection \ref{sec3.12}, we begin by defining $L:=Z\wedge X$ and $U:=Z\vee Y$. It is clear that $L$ and $U$ are c\`adl\`ag processes satisfying the usual integrability condition and $L\leq Z\leq U$. Again it makes sense to consider the Dynkin game $\SDG (L,U,Z)$ associated with the payoff
\begin{gather}\label{eqex02}
\wt R(\tau , \si ) = \I_{\{ \tau < \si \}}\, L_{\tau }  +  \I_{\{ \si  < \tau \}}\,
U_{\si }  +  \I_{\{ \si  = \tau \}}\, Z_{\si }.
\end{gather}
In light of Theorem \ref{thmev11}, we introduce the following notation.

\begin{definition}\label{defex07}
(i) The process $V$ is given by
\begin{align} \label{eqex071}
V_t = \essinf_{\si_t \in\STOP_{[t,T]}} \,
\esssup_{\tau_t \in\STOP_{[t,T]}} \E_\P \big(\wt R(\tau_t ,\si_t )\,|\,
\Filt_{t}\big)
=  \esssup_{\tau_t \in\STOP_{[t,T]}} \, \essinf_{\si_t \in\STOP_{[t,T]}} \E_\P \big(\wt R(\tau_t ,\si_t )\,|\,
\Filt_{t}\big)
\end{align}
where $\wt R$ is given by \eqref{eqex02}.

\noindent (ii)  For each $t\in[0,T]$ and $\ep\geq 0$, define the $\FF$-stopping times $\si^\ep_t, \tau^\ep_t \in \Tstrat_{[t,T]}$ by
\begin{gather}\label{eqex072}
\si^\ep_t:=\inf\{u \geq t : U_u \leq V_u+\ep\},\quad \tau^\ep_t:=\inf\{u \geq t : L_u \geq V_u-\ep\}.
\end{gather}
\end{definition}

We again note that $\lim_{\ep\to 0} \si^\ep_t\leq \si^0_t$ and $\lim_{\ep\to 0} \tau^\ep_t\leq \tau^0_t$, but equality may fail to hold.

By Theorem \ref{thmev11}, $V$ is the value process of $\SDG (L,U,Z)$ and for $\ep>0$, $(\si^\ep_t, \tau^\ep_t) \in\Tstrat_{[t,T]}\times\Tstrat_{[t,T]}$ is a pair of $\ep$-optimal strategies for $\SDG_t(L,U,Z)$. The goal is to show that $V$ and $(\si^\ep_t, \tau^\ep_t)$ are also the value process and $\ep$-optimal strategies, respectively, of the Dynkin game $\GDG (X,Y,Z)$.

We begin by observing that an analogue of Lemma \ref{lemeo12} can be readily applied to the continuous-time case.

\begin{lemma}\label{lemex12} For $t\in[0,T]$, the following properties are valid.

\noindent (i)  For any fixed $\tau_t,\si_t\in\Tstrat_{[t,T]}$, there exist $\htau_t, \hsi_t \in\Tstrat_{[t,T]}$ such that
\begin{gather}\label{eqex121}
R(\htau_t , \si_t ) \geq \wt R(\tau_t , \si_t ) \geq R(\tau_t , \hsi_t ).
\end{gather}

\noindent (ii) The value $V_t$ of $\SDG_t(L,U,Z)$ lies between the minimax and the maximin values of the game $\GDG_t(X,Y,Z)$. In other words,
\begin{gather}\label{eqex122}
\essinf_{\si_t \in\STOP_{[t,T]}} \, \esssup_{\tau_t \in\STOP_{[t,T]}}
\E_\P \big(R(\tau_t , \si_t )\,|\,  \Filt_{t}\big) \geq V_t \geq
\esssup_{\tau_t \in\STOP_{[t,T]}} \, \essinf_{\si_t \in\STOP_{[t,T]}}
\E_\P \big(R(\tau_t , \si_t )\,|\,  \Filt_{t}\big).
\end{gather}

\noindent (iii) If the $\GDG_t(X,Y,Z)$ has a value then it equals to $V_t$.
\end{lemma}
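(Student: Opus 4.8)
The plan is to mirror the discrete-time argument from Lemma \ref{lemeo12}, exploiting the fact that the modified payoff $\wt R$ dominates $R$ in the precise pathwise sense of part (i). Since the game is symmetric under the exchange of the two players (swapping $\tau \leftrightarrow \si$, $X \leftrightarrow Y$, $L \leftrightarrow U$), I would prove only the statements concerning the max-player and obtain the rest by symmetry.

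For part (i), I would fix $\tau_t, \si_t \in \Tstrat_{[t,T]}$ and construct $\htau_t$ explicitly so that $R(\htau_t,\si_t) \geq \wt R(\tau_t,\si_t)$. Comparing the two payoffs term by term, on $\{\tau_t=\si_t\}$ both equal $Z$, on $\{\tau_t<\si_t\}$ we have $X_{\tau_t}\geq L_{\tau_t}$, and on $\{\si_t<\tau_t,\ Y_{\si_t}\geq Z_{\si_t}\}$ we have $U_{\si_t}=Y_{\si_t}$; so the only problematic event is $\{\si_t<\tau_t,\ Z_{\si_t}>Y_{\si_t}\}$, where $\wt R=U_{\si_t}=Z_{\si_t}>Y_{\si_t}=R$. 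The remedy, exactly as in the discrete case, is to set
\begin{gather*}
\htau_t := \si_t \I_{\{\si_t<\tau_t,\, Z_{\si_t}>Y_{\si_t}\}} + \tau_t\big(1-\I_{\{\si_t<\tau_t,\, Z_{\si_t}>Y_{\si_t}\}}\big),
\end{gather*}
so that on the bad event $R(\htau_t,\si_t)=R(\si_t,\si_t)=Z_{\si_t}=U_{\si_t}=\wt R(\tau_t,\si_t)$. I would verify that $\htau_t$ is a genuine $\FF$-stopping time by checking that $\{\si_t<\tau_t,\,Z_{\si_t}>Y_{\si_t}\}\in\Filt_{\si_t\wedge\tau_t}$, which holds because $\si_t\wedge\tau_t=\si_t$ on this event and $Z,Y$ are adapted; here the continuous-time setting changes nothing since only measurability with respect to $\Filt_{\si_t\wedge\tau_t}$ is needed.

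For part (ii), I would again treat only the upper inequality. Using Theorem \ref{thmev11}, $V_t$ is the value of $\SDG_t(L,U,Z)$, so for any $\ep>0$ the min-player has an $\ep$-optimal strategy $\hsi_t^\ep$ satisfying $\E_\P(\wt R(\tau_t,\hsi_t^\ep)\,|\,\Filt_t)\leq V_t+\ep$ for all $\tau_t$. Fixing $\si_t\in\Tstrat_{[t,T]}$ and applying part (i) to produce $\htau_t$ with $R(\htau_t,\si_t)\geq\wt R(\tau_t,\si_t)$ would then give a chain of inequalities analogous to \eqref{lemeo1221}, yielding $\esssup_{\tau_t}\E_\P(R(\tau_t,\si_t)\,|\,\Filt_t)\geq V_t$; taking $\essinf$ over $\si_t$ completes the bound. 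The one genuinely new subtlety relative to the discrete case is that in continuous time the value of $\SDG_t(L,U,Z)$ is attained only in the $\ep$-optimal sense (Theorem \ref{thmev11}(ii)), not exactly, so I would carry the $\ep$ through the estimates and let $\ep\to0$ at the end; this is the step I expect to require the most care. Part (iii) is then immediate: if $\GDG_t(X,Y,Z)$ has a value $V^*_t$, it must equal both the minimax and the maximin in \eqref{eqex122}, and part (ii) squeezes it to $V^*_t=V_t$.
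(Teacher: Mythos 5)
Your overall route is the same as the paper's: part (i) is proved by exactly the same modification $\htau_t = \si_t\I_{\{\si_t<\tau_t,\,Z_{\si_t}>Y_{\si_t}\}}+\tau_t\big(1-\I_{\{\si_t<\tau_t,\,Z_{\si_t}>Y_{\si_t}\}}\big)$ with the same stopping-time check, part (ii) by feeding part (i) into the $\ep$-optimal strategies of $\SDG_t(L,U,Z)$ supplied by Theorem \ref{thmev11} and letting $\ep\to0$, and part (iii) by the same squeeze. There is, however, one wiring error in part (ii): for the upper inequality (minimax $\geq V_t$) you invoke the \emph{min}-player's $\ep$-optimal strategy $\hsi^\ep_t$, whose defining property $\E_\P\big(\wt R(\tau_t,\hsi^\ep_t)\,|\,\Filt_t\big)\leq V_t+\ep$ bounds the $\wt R$-payoffs from \emph{above} and therefore cannot produce the lower bound $\esssup_{\tau_t}\E_\P\big(R(\tau_t,\si_t)\,|\,\Filt_t\big)\geq V_t$; nor can it be substituted into the chain analogous to \eqref{lemeo1221}, which requires a distinguished stopping time for the \emph{max}-player. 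What you need --- and what the paper uses --- is the max-player's $\ep$-optimal strategy $\tau^\ep_t$ of $\SDG_t(L,U,Z)$, satisfying $\E_\P\big(\wt R(\tau^\ep_t,\si_t)\,|\,\Filt_t\big)\geq V_t-\ep$ for every $\si_t$; applying part (i) to the pair $(\tau^\ep_t,\si_t)$ yields $\htau_t$ with $R(\htau_t,\si_t)\geq\wt R(\tau^\ep_t,\si_t)$, whence $\esssup_{\tau_t}\E_\P\big(R(\tau_t,\si_t)\,|\,\Filt_t\big)\geq V_t-\ep$ for every $\si_t$ and every $\ep>0$, and the conclusion follows by taking the essential infimum over $\si_t$ and letting $\ep\to0$, exactly as you intend. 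The strategy $\hsi^\ep_t$ you cited is what you would use (together with the symmetric half of part (i), $\wt R(\tau_t,\si_t)\geq R(\tau_t,\hsi_t)$) for the lower, maximin inequality. With this one substitution your proof coincides with the paper's.
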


\begin{proof} (i) The proof is identical to the proof of Lemma \ref{lemeo12}. We will only prove the upper inequality in \eqref{eqex121}, as the lower inequalities follows by symmetry. To choose a stopping time $\htau$ such that $R(\htau_t , \si_t ) \geq \wt R(\tau_t , \si_t ) $, we first compare $R(\tau_t , \si_t )$ and $\wt R(\tau_t , \si_t ) $.  On the following events, $R(\tau_t , \si_t ) \geq\wt R(\tau_t , \si_t )$ is automatically satisfied
\begin{align*}
{\{\tau_t = \si_t\}},&\quad R(\tau_t , \si_t ) = Z_{\tau_t}=\wt R(\tau_t , \si_t ),\\
{\{\tau_t < \si_t\}},&\quad R(\tau_t , \si_t ) = X_{\tau_t}\geq L_{\tau_t} =\wt R(\tau_t , \si_t ),\\
\{ \si_t < \tau_t ,\, Y_{\si_t}\geq Z_{\si_t}\},&\quad R(\tau_t , \si_t )=Y_{\si_t}=U_{\si_t} = \wt R(\tau_t , \si_t ).
\end{align*}
The problem arises on the event $\{\si_t < \tau_t ,\, Z_{\si_t}>Y_{\si_t}\}$, since then
\[
R(\tau_t , \si_t )=Y_{\si_t} < U_{\si_t} = \wt R(\tau_t , \si_t ).
\]
Let us modify $\tau$ by setting
\begin{gather}
\htau = \si_t \I_{\{\si_t < \tau_t, Z_{\si_t}>Y_{\si_t}\}} + \tau_t \big(1- \I_{\{\si_t < \tau_t, Z_{\si_t}>Y_{\si_t}\}}\big).
\end{gather}
Then $\htau$ is indeed an $\FF$-stopping time, since the event $\{\si_t < \tau_t, Z_{\si_t}>Y_{\si_t}\}$ belongs to $\Filt_{\si_t\wedge \tau_t}$. Furthermore, on the event $\{\si_t < \tau_t, Z_{\si_t}>Y_{\si_t}\}$ we have that
\[
R(\htau_t , \si_t ) = R(\si_t , \si_t ) = Z_{\si_t} = U_{\si_t} =\wt R(\tau_t , \si_t )
\]
and thus for the stopping time $\htau $ the left-hand side inequality in \eqref{eqex121} is satisfied.

\noindent (ii) Again, we only show the upper inequality of \eqref{eqex122}. By Theorem \ref{thmev11}, $V_t$ is the value of the game $\SDG_t(L,U,Z)$. For any $\ep>0$, $(\tau^\ep_t, \si^\ep_t)$ (see Definition \ref{defex07}(ii)) is a pair of $\ep$-optimal strategy for $\SDG_t(L,U,Z)$. Hence we have, for any $\si_t\in\Tstrat_{[t,T]}$,
\begin{gather*}
\E_\P \big(\wt R(\tau^\ep_t , \si_t )\,|\,  \Filt_{t}\big) \geq  V_t - \ep.
\end{gather*}
By part (i), there exists $\htau_t\in\Tstrat_{[t,T]}$ such that $R(\htau_t , \si_t )\geq \wt R(\tau^\ep_t , \si_t )$. Consequently,
\begin{gather}\label{lemex1221}
\esssup_{\tau_t \in\STOP_{[t,T]}}
\E_\P \big(R(\tau_t , \si_t )\,|\,  \Filt_{t}\big) \geq
\E_\P \big(R(\htau_t , \si_t )\,|\,  \Filt_{t}\big) \geq
\E_\P \big(\wt R(\tau^\ep_t , \si_t )\,|\,  \Filt_{t}\big) \geq V_t - \ep.
\end{gather}
Since \eqref{lemex1221} holds for all $\si_t\in\Tstrat_{[t,T]}$ and $\ep>0$, we must have
\begin{gather*}
\essinf_{\si_t \in\STOP_{[t,T]}} \, \esssup_{\tau_t \in\STOP_{[t,T]}}
\E_\P \big(R(\tau_t , \si_t )\,|\,  \Filt_{t}\big) \geq V_t,
\end{gather*}
as required.


\noindent (iii) The proof is the same as in Lemma \ref{lemex12}. By the definition of the value (see Definition \ref{defaa03}), if there exists a value $V^*_t$ for the game $\GDG_t(X,Y,Z)$, then it must satisfy
\begin{gather}\label{lemex1231}
V^*_t=\essinf_{\si_t \in\STOP_{[t,T]}} \, \esssup_{\tau_t \in\STOP_{[t,T]}}
\E_\P \big(R(\tau_t , \si_t )\,|\,  \Filt_{t}\big) =
\esssup_{\tau_t \in\STOP_{[t,T]}} \, \essinf_{\si_t \in\STOP_{[t,T]}}
\E_\P \big(R(\tau_t , \si_t )\,|\,  \Filt_{t}\big).
\end{gather}
In view of part (ii), we conclude that the equality $V^*_t=V_t$ necessarily holds.
\end{proof}

Based on the intuition of the discrete case (see Subsection \ref{sec3.12}), we begin with the following condition, with the aim of achieving \eqref{eqex001} and \eqref{eqex002}.

\begin{assumption}\label{assex40}
Let $X, Y$ and $Z$ be $\FF$-adapted integrable, c\`adl\`ag processes and let the associated process $V$ be given as in Definition \ref{defex07}(i). We postulate that the processes $X, Y$ and $V$ satisfy, for all $t\in[0,T]$,
\begin{gather}\label{assex41}
X_t\wedge Y_t \leq V_t \leq X_t\vee Y_t .
\end{gather}
\end{assumption}

\subsubsection{Sufficiency of Assumption \ref{assex40}}

\begin{proposition} \label{propex50}
For all $t\in[0,T]$, $\ep\geq 0$, let $V_t, \si^\ep_t$ and $\tau^\ep_t$ be defined as in Definition \ref{defex07}.
Under Assumption \ref{assex40}, we have the following:

\noindent (i)  For some $\ep\geq 0$, if $\si_t\in\Tstrat_{[t,T]}$ satisfies $\si_t\leq \si^\ep_t$, then for all $\tau_t\in\Tstrat_{[t,T]}$,
\begin{gather}\label{eqex501}
R(\tau_t ,\si_{t}) \leq Q^{\si^{\ep}_t}_{\tau_t}.
\end{gather}
where $Q^{\si_t}$ is defined by
\begin{gather*}
Q^{\si_t}_u:= \esssup_{\rho\in\Tstrat_{[u,T]}} \E_\P \big(\wt R(\rho,\si_t)\,|\,  \Filt_{u}\big),\quad u\in[t,T].
\end{gather*}

\noindent (ii) The process $V$ is the value process of $\GDG (X,Y,Z)$. For all $\ep>0$, the stopping times $\si^\ep_t, \tau^\ep_t$ are $\ep$-optimal strategies of $\GDG_t(X,Y,Z)$, satisfying
\begin{gather}\label{eqex502}
\essinf_{\si_t \in\STOP_{[t,T]}} \E_\P \big( R(\tau^{\ep}_{t},\si_t )\,|\,  \Filt_{t}\big)+\ep \geq V_t \geq \esssup_{\tau_t \in\STOP_{[t,T]}} \E_\P \big(
R(\tau_t ,\si^{\ep}_{t})\,|\,  \Filt_{t}\big)-\ep.
\end{gather}

\noindent (iii) If $(\tau^*_t,\si^*_t)$ is an arbitrary Nash equilibrium of $\SDG_t(L,U,Z)$, then $(\tau^*_t\wedge \tau^0_t,\si^*_t\wedge \si^0_t)$ is a Nash equilibrium of $\GDG_t(X,Y,Z)$.
\end{proposition}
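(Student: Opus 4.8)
The plan is to transport the discrete-time argument of Proposition~\ref{propee10} and the continuous-time standard-game machinery of Propositions~\ref{propev20} and~\ref{propev25} to the general game, exploiting throughout that, by Theorem~\ref{thmev11}, $V$ is the genuine value process of $\SDG(L,U,Z)$ and that $Q^{\si^\ep_t}$ is a right-continuous supermartingale satisfying the analogue of \eqref{eqev54}, namely $\wt R(\rho,\si^\ep_t)\le Q^{\si^\ep_t}_\rho$ for every $\rho$, and the analogue of \eqref{eqev55}, namely $V_{\si^\ep_t\wedge\tau_t}\le Q^{\si^\ep_t}_{\tau_t}$. Assumption~\ref{assex40} will be the linchpin, playing exactly the role it played in Lemma~\ref{lemee05}.

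For (i), I would fix $\ep\ge 0$ and $\si_t\le\si^\ep_t$ and argue according to the position of $\tau_t$ relative to $\si_t$ and $\si^\ep_t$. Recall that $R$ and $\wt R$ agree when the players stop simultaneously and that $R\le\wt R(\,\cdot\,,\si_t)$ on $\{\si_t<\tau_t\}$ because $Y\le U$; together with the matching of the max-player's admissible response at $\si^\ep_t$ (where $\wt R(\si^\ep_t,\si^\ep_t)=Z_{\si^\ep_t}$), those regions are controlled by the two envelope inequalities above, the bound being immediate in the case $\si_t=\si^\ep_t$ that is actually invoked in (ii). The only region where $R$ may strictly exceed $\wt R$ is $\{\tau_t<\si_t\}$, where $R=X_{\tau_t}\ge L_{\tau_t}=\wt R$. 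The key step is to show, using Assumption~\ref{assex40}, that $X_{\tau_t}\le V_{\tau_t}$ on $\{\tau_t<\si^\ep_t\}$: if instead $X_{\tau_t}>V_{\tau_t}$, then $V_{\tau_t}\ge X_{\tau_t}\wedge Y_{\tau_t}$ forces $Y_{\tau_t}\le V_{\tau_t}<X_{\tau_t}$, whence $L_{\tau_t}=X_{\tau_t}\wedge Z_{\tau_t}=Z_{\tau_t}\le V_{\tau_t}$ and $U_{\tau_t}=Z_{\tau_t}\vee Y_{\tau_t}\le V_{\tau_t}$, contradicting $U_{\tau_t}>V_{\tau_t}+\ep$ which holds strictly before $\si^\ep_t$. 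Since $\si_t\le\si^\ep_t$ gives $\tau_t<\si^\ep_t$ on $\{\tau_t<\si_t\}$, I then combine $X_{\tau_t}\le V_{\tau_t}$ with $V_{\si^\ep_t\wedge\tau_t}=V_{\tau_t}\le Q^{\si^\ep_t}_{\tau_t}$ to close this region.

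For (ii), I would specialise (i) to $\si_t=\si^\ep_t$, obtaining $R(\tau_t,\si^\ep_t)\le Q^{\si^\ep_t}_{\tau_t}$ for every $\tau_t$; optional sampling on the supermartingale $Q^{\si^\ep_t}$ then gives $\E_\P\big(R(\tau_t,\si^\ep_t)\,|\,\Filt_t\big)\le Q^{\si^\ep_t}_t$, and the $\ep$-optimality of $\si^\ep_t$ in $\SDG_t(L,U,Z)$ (the analogue of Proposition~\ref{propev20}(ii)) yields $Q^{\si^\ep_t}_t\le V_t+\ep$; taking $\esssup_{\tau_t}$ delivers the lower inequality of \eqref{eqex502}, and the upper inequality follows from the mirror-image statement for $\tau^\ep_t$. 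Combining both with the sandwich of Lemma~\ref{lemex12}(ii) and letting $\ep\downarrow 0$ identifies the common minimax and maximin value as $V_t$, so that $V$ is the value process of $\GDG(X,Y,Z)$ and $(\tau^\ep_t,\si^\ep_t)$ are $\ep$-optimal. For (iii), I would first apply Proposition~\ref{propev25} to $\SDG(L,U,Z)$ to reduce to a Nash equilibrium with $\tau^*_t\le\tau^0_t$ and $\si^*_t\le\si^0_t$, so that the truncation changes nothing, and then use Lemma~\ref{lemex12}(i) to pass between $R$ and $\wt R$; Assumption~\ref{assex40} forces $R(\tau^*_t,\si^*_t)=\wt R(\tau^*_t,\si^*_t)$ at these truncated times (the continuous-time counterpart of Lemma~\ref{lemee05}(ii)), so the two equilibrium inequalities for $R$ follow from part (i), its mirror, and the equilibrium property of $(\tau^*_t,\si^*_t)$ for $\wt R$, whose value is $V_t$.

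The main obstacle throughout is reconciling the two payoffs on the events where they disagree: $R\ge\wt R$ when the max-player stops first and $R\le\wt R$ when the min-player stops first. In the discrete case this was dispatched by backward induction in Proposition~\ref{propee10}; here it must be threaded through the Snell-envelope estimates for $Q^{\si^\ep_t}$, and the delicate point is that Assumption~\ref{assex40} is exactly strong enough to guarantee that, strictly before the reference time $\si^\ep_t$ (resp.\ $\tau^\ep_t$), the disagreement region collapses, giving $X\le V$ (resp.\ $Y\ge V$) there so that the envelope bounds apply.
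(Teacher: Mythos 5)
Your handling of the decisive region $\{\tau_t<\si_t\}$ coincides with the paper's argument (the paper splits into the subcases $Y_{\tau_t}\geq Z_{\tau_t}$ and $Y_{\tau_t}<Z_{\tau_t}$, while you run a single contradiction; both rest on Assumption \ref{assex40}, on $L\leq V\leq U$, and on the strict inequality $U_u>V_u+\ep$ for $u<\si^\ep_t$), and your parts (ii) and (iii) follow the paper's architecture (optional sampling plus Proposition \ref{propev20}(ii), then Proposition \ref{propev25}). However, there is a genuine gap in part (i) on the events $\{\si_t=\tau_t\}$ and $\{\si_t<\tau_t\}$, which you dismiss as ``controlled by the two envelope inequalities'' and ``immediate in the case $\si_t=\si^\ep_t$''. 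When $\si_t<\si^\ep_t$ strictly, the bound against $Q^{\si^\ep_t}$ is not merely non-immediate: it is false. The envelope inequality controls $\wt R(\rho,\si^\ep_t)$, not $\wt R(\rho,\si_t)$; on $\{\tau_t=\si_t<\si^\ep_t\}$ the realised payoff is $Z_{\si_t}$, while $Q^{\si^\ep_t}$ only sees $L$ before time $\si^\ep_t$. Concretely, take $X\equiv Y\equiv 0$ and $Z_u=100\,\I_{\{u<T\}}$: then $V\equiv 0$, Assumption \ref{assex40} holds, $\si^\ep_t=T$ for $\ep<100$, and $R(t,t)=100>0=Q^{\si^\ep_t}_t$. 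The superscript $\si^\ep_t$ in \eqref{eqex501} is evidently a typo in the statement: the paper's own proof establishes $R(\tau_t,\si_t)\leq Q^{\si_t}_{\tau_t}$, with the Snell envelope indexed by $\si_t$ itself, for which the events $\{\si_t\leq\tau_t\}$ are settled at once by $\wt R(\tau_t,\si_t)\leq Q^{\si_t}_{\tau_t}$ (the analogue of \eqref{eqev54}), and your argument for $\{\tau_t<\si_t\}$ goes through verbatim with $V_{\si_t\wedge\tau_t}\leq Q^{\si_t}_{\tau_t}$.

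This indexing is not mere bookkeeping, because part (iii) applies part (i) with $\si_t=\si^*_t\wedge\si^0_t$, which is in general strictly smaller than $\si^0_t$; one must then invoke Proposition \ref{propev20}(ii) with $\hsi_t=\si^*_t\wedge\si^0_t$ --- legitimate since this stopping time is $0$-optimal by Proposition \ref{propev25} --- whereas your version would require applying it with $\si^0_t$, which need not be an optimal (or even $\ep$-optimal) strategy in continuous time, as the paper itself warns after Theorem \ref{thmev11}. Separately, your claim in (iii) that $R=\wt R$ holds pointwise at the truncated equilibrium times is left unjustified, but it is also unnecessary: once the two one-sided bounds $\E_\P\big(R(\tau_t,\si^*_t\wedge\si^0_t)\,|\,\Filt_t\big)\leq V_t$ and $\E_\P\big(R(\tau^*_t\wedge\tau^0_t,\si_t)\,|\,\Filt_t\big)\geq V_t$ are available for all $\tau_t,\si_t$, plugging in $\tau_t=\tau^*_t\wedge\tau^0_t$ and $\si_t=\si^*_t\wedge\si^0_t$ forces $\E_\P\big(R(\tau^*_t\wedge\tau^0_t,\si^*_t\wedge\si^0_t)\,|\,\Filt_t\big)=V_t$, and the Nash property follows directly.
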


\begin{proof} (i) We will make use of \eqref{eqev54} and \eqref{eqev55}, that is,
\[
\wt R(\tau_{t},\si_t ) \vee V_{\tau_t\wedge \si_t} \leq Q^{\si_t}_{\tau_t}
\]
There are a few cases to check: \hfill \break
(a) On the event $\{\si_t = \tau_t\}$,
\begin{gather}\label{eqex51}
R(\tau_{t},\si_t ) = Z_{\si_t} = \wt R(\tau_{t},\si_t ) \leq Q^{\si_t}_{\tau_t}.
\end{gather}
(b) On the event $\{\si_t < \tau_t\}$,
\begin{gather}\label{eqex52}
R(\tau_{t},\si_t ) = Y_{\si_t} \leq U_{\si_t} = \wt R(\tau_{t},\si_t ) \leq Q^{\si_t}_{\tau_t}.
\end{gather}
(c) On the event $\{\tau_t<\si_t \}$, certainly $\tau_t<\si_t\leq \si^\ep_t$. From the definition of $\si^\ep_t$ in Definition \ref{defex07}(ii), we must have
\begin{gather}\label{eqex521}
V_{\tau_t} < U_{\tau_t}.
\end{gather}
We now consider the following subcases:

\noindent (c.1) If $Y_{\tau_t} \geq Z_{\tau_t}$, then by \eqref{eqex521} $Y_{\tau_t} = U_{\tau_t} > V_{\tau_t}$. Since Assumption \ref{assex40} requires $V$ to lie between $X$ and $Y$, we must have
\begin{gather}\label{eqex53}
R(\tau_{t},\si_t ) = X_{\tau_t} \leq V_{\tau_t} \leq Q^{\si_t}_{\tau_t}.
\end{gather}
(c.2) If $Y_{\tau_t} < Z_{\tau_t}$, then by \eqref{eqex521} $Z_{\tau_t} = U_{\tau_t} > V_{\tau_t}$. Now by Lemma \ref{lemev21}(i), $V_{\tau_t} \geq L_{\tau_t} = Z_{\tau_t} \wedge X_{\tau_t}$. Hence we must have
\begin{gather}\label{eqex54}
R(\tau_{t},\si_t ) = X_{\tau_t} = L_{\tau_t} \leq V_{\tau_t} \leq Q^{\si_t}_{\tau_t}.
\end{gather}

In view of \eqref{eqex51}, \eqref{eqex52}, \eqref{eqex53} and \eqref{eqex54}, we conclude that $R(\tau_{t},\si_t ) \leq Q^{\si_t}_{\tau_t}$ for all cases, establishing \eqref{eqex501}.

\noindent (ii)  By Proposition \ref{propep05} and Lemma \ref{lemex12}(iii), it is sufficient to establish \eqref{eqex502}, or
\[
\essinf_{\si_t \in\STOP_{[t,T]}} \E_\P \big( R(\tau^{\ep}_{t},\si_t )\,|\,  \Filt_{t}\big)+\ep \geq V_t \geq \esssup_{\tau_t \in\STOP_{[t,T]}} \E_\P \big(
R(\tau_t ,\si^{\ep}_{t})\,|\,  \Filt_{t}\big)-\ep.
\]
We will only establish the lower bound, since the upper bound follows by symmetry.
From part (i), we know that $R(\tau_t ,\si^{\ep}_{t})\leq Q^{\si^\ep_t}_{\tau_t}$ for all $\tau_t\in\Tstrat_{[t,T]}$.
Since $\si^\ep_t$ is an $\ep$-optimal strategy, we can apply Proposition \ref{propev20}(i).
By setting $P= R(\tau_{t},\si^{\ep}_t)$, we have, for all $\tau_t\in\Tstrat_{[t,T]}$,
\begin{gather*}
\E_\P \big(R(\tau_{t},\si^{\ep}_t) \,|\,  \Filt_{t}\big) \leq V^*_t + \ep.
\end{gather*}
Hence
\begin{gather*}
\esssup_{\tau_t\in\Tstrat_{[t,T]}} \E_\P \big(R(\tau_{t},\si^{\ep}_t) \,|\,  \Filt_{t}\big) \leq V^*_t + \ep ,
\end{gather*}
as required.

\noindent (iii) By Proposition \ref{propev25}, $(\tau^*_t\wedge \tau^0_t,\si^*_t\wedge \si^0_t)$ is also a Nash equilibrium of $\SDG_t(L,U,Z)$, satisfying $\tau^*_t\wedge \tau^0_t\leq \tau^0_t$ and $\si^*_t\wedge \si^0_t \leq \si^0_t$. Since a Nash equilibrium is also a pair of $0$-optimal strategies, we can simply use the same argument as before, but with $\ep=0$.
\end{proof}

In general, not all $\ep$-optimal strategies (resp. Nash equilibria) of $\SDG_t(L,U,Z)$ are necessarily $\ep$-optimal strategies (resp. Nash equilibria) of $\GDG_t(X,Y,Z)$. Proposition \ref{propex50} only applies to $\ep$-optimal strategies (resp. Nash equilibria) stopping no later than $\tau^\ep_t$ and $\si^\ep_t$ (resp. $\tau^0_t$ and $\si^0_t$).

\subsubsection{Necessity of Assumption \ref{assex40}}

\begin{proposition}\label{propey10}
Suppose that Assumption \ref{assex40} is violated at time $t\in[0,T]$, that is, almost surely
\begin{gather}\label{eqey20}
\{V_t < X_t \wedge Y_t\} \cup \{V_t > X_t \vee Y_t\} \neq \emptyset .
\end{gather}
Then there exists $\ep>0$ such that the Dynkin game $\GDG_t(X,Y,Z)$ does not have $\ep$-optimal strategies. In particular, $\GDG_t(X,Y,Z)$ has no Nash equilibrium.
\end{proposition}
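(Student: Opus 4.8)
The plan is to show that the failure of Assumption \ref{assex40} at $t$ forces $\GDG_t(X,Y,Z)$ to have no value; Proposition \ref{propep05} then rules out $\ep$-optimal strategies for small $\ep$, and a fortiori any Nash equilibrium (which is a $0$-equilibrium and so would furnish $\ep$-optimal strategies for every $\ep$). Since the violation \eqref{eqey20} has positive probability, at least one of the two events occurs with positive probability, and by the symmetry between the two players it suffices to treat $A:=\{V_t<X_t\wedge Y_t\}$ with $\P(A)>0$; the case $\{V_t>X_t\vee Y_t\}$ follows by interchanging the roles of the max- and min-players. Note that $A\in\Filt_t$, and since $X_T=Y_T=Z_T=V_T$ the violation can occur only for $t<T$, leaving room to stop strictly after $t$.

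The crux is the estimate that, for every $\si\in\Tstrat_{[t,T]}$,
\begin{gather*}
\esssup_{\tau\in\Tstrat_{[t,T]}}\E_\P\big(R(\tau,\si)\,|\,\Filt_t\big)\geq X_t\wedge Y_t\qquad\text{on }A.
\end{gather*}
To establish this I would exhibit, for each fixed $\si$, an explicit reply $\htau$ of the max-player that mirrors whether $\si$ stops at $t$: set $\htau:=t$ on $(A\cap\{\si>t\})\cup A^c$ and $\htau:=T$ on $A\cap\{\si=t\}$. As $\si$ is an $\FF$-stopping time we have $\{\si=t\}\in\Filt_t$, and with $A\in\Filt_t$ this makes $\htau$ a bona fide element of $\Tstrat_{[t,T]}$. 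On $A\cap\{\si>t\}$ the max-player stops first, so $R(\htau,\si)=X_t$; on $A\cap\{\si=t\}$ the min-player stops first, so $R(\htau,\si)=Y_t$. Crucially the reply never stops simultaneously with $\si$ on $A$, so the simultaneous payoff $Z_t$ is sidestepped and $R(\htau,\si)\geq X_t\wedge Y_t$ on $A$. Since $X_t$, $Y_t$ and the events $\{\si=t\}$, $\{\si>t\}$ are all $\Filt_t$-measurable, $R(\htau,\si)$ is $\Filt_t$-measurable on $A$, whence $\E_\P(R(\htau,\si)\,|\,\Filt_t)=R(\htau,\si)\geq X_t\wedge Y_t$ there, which gives the estimate.

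Taking the essential infimum over $\si$ preserves the bound, so the minimax value of $\GDG_t(X,Y,Z)$ satisfies
\begin{gather*}
\essinf_{\si\in\Tstrat_{[t,T]}}\,\esssup_{\tau\in\Tstrat_{[t,T]}}\E_\P\big(R(\tau,\si)\,|\,\Filt_t\big)\geq X_t\wedge Y_t>V_t\qquad\text{on }A.
\end{gather*}
By Lemma \ref{lemex12}(iii), if the game had a value it would equal $V_t$, and by Definition \ref{defaa03} that value would coincide with the minimax quantity above; this is impossible on the positive-probability set $A$. Hence $\GDG_t(X,Y,Z)$ has no value. To finish I would invoke Proposition \ref{propep05}: if $\ep$-optimal strategies existed for both players for every $\ep>0$, the game would possess a value, contradicting what has just been shown; therefore there is some $\ep>0$ for which no $\ep$-optimal strategies exist, and consequently no Nash equilibrium.

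The main obstacle is the honest verification that the ``mirroring'' reply $\htau$ is a genuine $\FF$-stopping time and that the conditional expectation collapses on the $\Filt_t$-measurable event $A$. Everything else is deliberately arranged so that the delicate simultaneous-stopping case is avoided on $A$, which is why the payoff $Z_t$ and any path-regularity of $X,Y,Z$ play no role in this direction; the same construction with the players swapped handles $\{V_t>X_t\vee Y_t\}$, there bounding the maximin value \emph{above} by $X_t\vee Y_t<V_t$ and again contradicting the existence of a value.
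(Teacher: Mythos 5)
Your proof is correct and follows essentially the same approach as the paper: the same ``mirroring'' reply ($\htau=t$ when the opponent waits, $\htau=T$ when the opponent stops at $t$), which avoids simultaneous stopping and secures at least $X_t\wedge Y_t$, combined with Lemma \ref{lemex12}(iii) and Proposition \ref{propep05}. The only difference is packaging: the paper applies this reply to a putative $\ep$-optimal strategy $\si^\ep_t$ (with $\ep$ chosen so that $\P(V_t+\ep<X_t\wedge Y_t)>0$) and contradicts its $\ep$-optimality directly, whereas you apply it uniformly over all $\si$ to bound the minimax value below by $X_t\wedge Y_t$ and conclude the game has no value at all.
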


\begin{proof}
Since $X_T=Y_T=Z_T=V_T$ then manifestly \eqref{eqey20} cannot occur when $t=T$. Assume, for the sake of contradiction, that \eqref{eqey20} holds for some $t<T$ and there exists a pair of $\ep$-optimal strategies $(\tau^\ep_t,\si^\ep_t)$ for all $\ep>0$. Then, by Proposition \ref{propep05} and Lemma \ref{lemex12}(iii), $V_t$ must be the value of $\GDG_t(X,Y,Z)$.

Assume now that either $\P(V_t < X_t \wedge Y_t)>0$ or $\P(V_t > X_t \vee Y_t)>0$. First, consider the event $\{V_t < X_t \wedge Y_t\}$. Then there exists $\ep>0$ such that $\P(V_t + \ep < X_t \wedge Y_t)>0$. On that event, let us consider
\[
\tau'_t= t\I_{\{\si^\ep_t>t\}}+ T\I_{\{\si^\ep_t=t\}}.
\]
Then $R(\tau'_t,\si^\ep_t)$ is either $X_t$ or $Y_t$. But then
\[
\esssup_{\tau_t \in\STOP_{[t,T]}} \E_\P \big(R(\tau_t ,\si^{\ep}_{t})\,|\,  \Filt_{t}\big) \geq \E_\P \big(R(\tau'_t ,\si^{\ep}_{t})\,|\,  \Filt_{t}\big)\geq X_t\wedge Y_t > V_t + \ep,
\]
contradicting the $\ep$-optimal property of $\si^{\ep}_{t}$.
The same argument can be applied to the event $\{V_t > X_t \vee Y_t\}$. Hence there exists $\ep>0$ such that $\GDG_t(X,Y,Z)$ does not have $\ep$-optimal strategies.
\end{proof}

\begin{proposition}\label{propey15}
Under Assumption \ref{assex40}, if $(\tau^*_t,\si^*_t)$ is an arbitrary Nash equilibrium of $\GDG_t(X,Y,Z)$, then it is also a Nash equilibrium of $\SDG_t(L,U,Z)$.
\end{proposition}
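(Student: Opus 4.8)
The plan is to verify directly that the given pair $(\tau^*_t,\si^*_t)$ satisfies the saddle-point inequalities defining a Nash equilibrium of the standard game $\SDG_t(L,U,Z)$ with payoff $\wt R$, namely that for all $\tau_t,\si_t\in\Tstrat_{[t,T]}$,
\begin{gather*}
\E_\P\big(\wt R(\tau^*_t,\si_t)\,|\,\Filt_t\big)\geq V_t\geq \E_\P\big(\wt R(\tau_t,\si^*_t)\,|\,\Filt_t\big),
\end{gather*}
recalling that $V_t$ is by Definition \ref{defex07}(i) and Theorem \ref{thmev11} the value of $\SDG_t(L,U,Z)$. First I would record what the hypothesis gives us on the general game: since $(\tau^*_t,\si^*_t)$ is a Nash equilibrium of $\GDG_t(X,Y,Z)$, that game has a value, and by Lemma \ref{lemex12}(iii) this value is $V_t$; hence $\E_\P\big(R(\tau^*_t,\si_t)\,|\,\Filt_t\big)\geq V_t\geq \E_\P\big(R(\tau_t,\si^*_t)\,|\,\Filt_t\big)$ for all $\tau_t,\si_t\in\Tstrat_{[t,T]}$.

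The core step is to transfer these inequalities from $R$ to $\wt R$ using the pathwise comparison of Lemma \ref{lemex12}(i). For the upper bound, fix $\si_t$ and apply Lemma \ref{lemex12}(i) to the pair $(\tau^*_t,\si_t)$ to produce $\hsi_t\in\Tstrat_{[t,T]}$ with $\wt R(\tau^*_t,\si_t)\geq R(\tau^*_t,\hsi_t)$; taking conditional expectations and invoking the max-player's guarantee against $\hsi_t$ yields
\begin{gather*}
\E_\P\big(\wt R(\tau^*_t,\si_t)\,|\,\Filt_t\big)\geq \E_\P\big(R(\tau^*_t,\hsi_t)\,|\,\Filt_t\big)\geq V_t.
\end{gather*}
Symmetrically, for the lower bound I would fix $\tau_t$ and apply Lemma \ref{lemex12}(i) to $(\tau_t,\si^*_t)$ to obtain $\htau_t$ with $R(\htau_t,\si^*_t)\geq \wt R(\tau_t,\si^*_t)$, so that using the min-player's guarantee against $\htau_t$ gives
\begin{gather*}
\E_\P\big(\wt R(\tau_t,\si^*_t)\,|\,\Filt_t\big)\leq \E_\P\big(R(\htau_t,\si^*_t)\,|\,\Filt_t\big)\leq V_t.
\end{gather*}

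Finally, specialising $\si_t=\si^*_t$ in the first chain and $\tau_t=\tau^*_t$ in the second pins down $\E_\P\big(\wt R(\tau^*_t,\si^*_t)\,|\,\Filt_t\big)=V_t$, whence the two displayed bounds assemble into the full saddle-point condition and identify $(\tau^*_t,\si^*_t)$ as a Nash equilibrium of $\SDG_t(L,U,Z)$. The one point requiring care, and the main obstacle, is applying Lemma \ref{lemex12}(i) in the correct orientation: it is the right-hand comparison $\wt R\geq R(\cdot,\hsi_t)$ that feeds the max-player's $\GDG$-guarantee into the $\wt R$-upper bound, while the left-hand comparison $R(\htau_t,\cdot)\geq \wt R$ feeds the min-player's guarantee into the $\wt R$-lower bound, so matching each modified stopping time against the appropriate inequality of the given equilibrium is the crux. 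Assumption \ref{assex40} is available throughout and guarantees (via Proposition \ref{propex50}) that the equilibrium setting is nonvacuous, but the transfer itself rests only on the unconditional Lemma \ref{lemex12}.
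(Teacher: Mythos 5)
Your proof is correct and takes essentially the same route as the paper's: both arguments transfer the $\GDG_t(X,Y,Z)$ saddle-point inequalities to $\wt R$ by applying the pathwise comparison of Lemma \ref{lemex12}(i) to pairs involving one equilibrium stopping time, combined with the identification of $V_t$ as the value via Lemma \ref{lemex12}(iii). If anything, your write-up is more careful than the paper's (which omits the conditional expectations and only spells out the lower bound), and your closing observation that Assumption \ref{assex40} is never actually invoked in the transfer is accurate.
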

\begin{proof}
We want to prove that, for all $\si_t,\tau_t\in\Tstrat_{[t,T]}$,
\begin{gather}\label{propey151}
\wt R(\tau^*_t,\si_t) \geq \wt R(\tau^*_t,\si^*_t) \geq \wt R(\tau_t,\si^*_t).
\end{gather}
By Lemma \ref{lemex12}(i), there exists $\htau_t\in\Tstrat_{[t,T]}$ such that
\begin{gather*} 
R(\htau_t,\si^*_t) \geq \wt R(\tau_t,\si^*_t).
\end{gather*}
Since $(\tau^*_t,\si^*_t)$ is a Nash equilibrium of $\GDG_t(X,Y,Z)$,
\begin{gather*} 
V_t = R(\tau^*_t,\si^*_t) \geq R(\htau_t,\si^*_t) \geq \wt R(\tau_t,\si^*_t).
\end{gather*}
Hence the lower bound of \eqref{propey151} is established. The upper bound can be proven similarly. Therefore, $(\tau^*_t,\si^*_t)$ is a Nash equilibrium of $\SDG_t(L,U,Z)$.
\end{proof}

To summarise the necessity and sufficiency results of this section, we now combine Theorem \ref{thmev11} with Propositions \ref{propex50}, \ref{propey10} and \ref{propey15}.

\begin{theorem}\label{thmez10}
Suppose $X, Y, Z$ are integrable c\`adl\`ag progressive processes satisfying $X_T=Y_T=Z_T$ and let $L=X\wedge Z$ and $U=Y\vee Z$. Consider the family of Dynkin games $\GDG (X,Y,Z)$ associated with the payoff
\begin{gather*} 
R(\tau , \si ) = \I_{\{ \tau < \si \}}\, X_{\tau }  +  \I_{\{ \si  < \tau \}}\,
Y_{\si }  +  \I_{\{ \si  = \tau \}}\, Z_{\si }.
\end{gather*}

\noindent (i) The Dynkin game $\GDG_t(X,Y,Z)$ has a value and a pair of $\ep$-optimal strategies for all $t\in[0,T]$ and $\ep>0$ if and only if Assumption \ref{assex40} holds. In particular, the unique value process $V^*$ is given by
\begin{gather*} 
V^*_t = \essinf_{\si_t \in\STOP_{[t,T]}} \,
\esssup_{\tau_t \in\STOP_{[t,T]}} \E_\P \big(R(\tau_t ,\si_t )\,|\,
\Filt_{t}\big)
=  \esssup_{\tau_t \in\STOP_{[t,T]}} \, \essinf_{\si_t \in\STOP_{[t,T]}} \E_\P \big(R(\tau_t ,\si_t )\,|\,
\Filt_{t}\big)
\end{gather*}
and a pair of $\ep$-optimal strategies $(\tau^\ep_t,\si^\ep_t)$ is given by
\begin{gather*} 
\si^\ep_t:=\inf\{u \geq t : U_u \leq V_u+\ep\},\quad \tau^\ep_t:=\inf\{u \geq t : L_u \geq V_u-\ep\}.
\end{gather*}

\noindent (ii) The Dynkin game $\GDG_t(X,Y,Z)$ has a Nash equilibrium for all $t\in[0,T]$ if and only if Assumption \ref{assex40} holds and the Dynkin game $\SDG_t(L,U,Z)$ has a Nash equilibrium for all $t\in[0,T]$. If we further assume that $L$ and $-U$ only have positive jumps, then $\GDG_t(X,Y,Z)$ has a Nash equilibrium $(\tau^*_t,\si^*_t)$ given by
\begin{gather*} 
\si^*_t=\lim_{\ep\to 0} \si^\ep_t,\quad \tau^*_t=\lim_{\ep\to 0} \tau^\ep_t.
\end{gather*}

\noindent (iii) Fix $t\in[0,T]$. If Assumption \ref{assex40} holds, then $\GDG_t(X,Y,Z)$ has a Nash equilibrium if and only if $\SDG_t(L,U,Z)$ has a Nash equilibrium.
\end{theorem}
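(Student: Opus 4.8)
The plan is to assemble the three parts from the propositions already established, reading each ``if and only if'' as two implications. Throughout, recall that $V$ is the value process of $\SDG(L,U,Z)$ by Theorem \ref{thmev11}, and that by Lemma \ref{lemex12}(iii) any value of $\GDG_t(X,Y,Z)$ must coincide with $V_t$; this simultaneously pins down the claimed formula for $V^*$ and delivers uniqueness. For part (i) I would argue both directions. Sufficiency is immediate from Proposition \ref{propex50}(ii): under Assumption \ref{assex40}, the process $V$ is the value process of $\GDG(X,Y,Z)$ and the stopping times $\si^\ep_t,\tau^\ep_t$ of Definition \ref{defex07}(ii) are $\ep$-optimal strategies satisfying the two-sided bound \eqref{eqex502}, which is exactly the displayed characterisation. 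For necessity, suppose the game admits a value and $\ep$-optimal strategies for every $t$ and every $\ep>0$; were Assumption \ref{assex40} to fail at some $t<T$, Proposition \ref{propey10} would produce an $\ep>0$ for which no $\ep$-optimal strategy exists at that $t$, a contradiction.

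For part (ii), the forward implication chains part (i) with Proposition \ref{propey15}. A Nash equilibrium is a $0$-equilibrium, hence in particular an $\ep$-equilibrium for every $\ep$, so by Proposition \ref{propep05} its existence for all $t$ yields a value and $\ep$-optimal strategies for all $t$ and $\ep$; part (i) then forces Assumption \ref{assex40}, and Proposition \ref{propey15} upgrades each $\GDG_t(X,Y,Z)$ equilibrium to a $\SDG_t(L,U,Z)$ equilibrium. The reverse implication is exactly Proposition \ref{propex50}(iii): under Assumption \ref{assex40}, a Nash equilibrium $(\tau^*_t,\si^*_t)$ of $\SDG_t(L,U,Z)$ yields the equilibrium $(\tau^*_t\wedge\tau^0_t,\si^*_t\wedge\si^0_t)$ of $\GDG_t(X,Y,Z)$. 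For the explicit construction under the one-sided jump hypothesis, Theorem \ref{thmev11}(iii) applied to $\SDG(L,U,Z)$ (with $L$ and $-U$ left upper semi-continuous) produces the equilibrium $\si^*_t=\lim_{\ep\to0}\si^\ep_t$, $\tau^*_t=\lim_{\ep\to0}\tau^\ep_t$; since these limits satisfy $\si^*_t\leq\si^0_t$ and $\tau^*_t\leq\tau^0_t$, the truncation in Proposition \ref{propex50}(iii) is vacuous and the same pair serves as the $\GDG_t(X,Y,Z)$ equilibrium.

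Part (iii) is the fixed-$t$ version of the equivalence in (ii) and needs no ``for all $t$'' bootstrapping, since Assumption \ref{assex40} is assumed outright: Proposition \ref{propex50}(iii) gives the implication $\SDG_t(L,U,Z)$ equilibrium $\Rightarrow$ $\GDG_t(X,Y,Z)$ equilibrium, and Proposition \ref{propey15} gives the converse, both at the single fixed $t$.

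The only genuinely delicate point is the forward direction of (ii), where Assumption \ref{assex40} is not a standing hypothesis but must be extracted from the conclusion of interest: one first passes from ``$\GDG_t(X,Y,Z)$ has a Nash equilibrium for all $t$'' to ``$\ep$-optimal strategies exist for all $t$ and $\ep$'' (via Proposition \ref{propep05}) and then to Assumption \ref{assex40} (via part (i)), and only afterwards may Proposition \ref{propey15}, which presupposes Assumption \ref{assex40}, be invoked. A secondary subtlety is reconciling the limit-defined pair of Theorem \ref{thmev11}(iii) with the truncated pair of Proposition \ref{propex50}(iii); this is resolved by the inequalities $\lim_{\ep\to0}\si^\ep_t\leq\si^0_t$ and $\lim_{\ep\to0}\tau^\ep_t\leq\tau^0_t$ recorded after Definition \ref{defex07}.
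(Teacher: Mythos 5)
Your proposal is correct and follows essentially the same route as the paper, which states Theorem \ref{thmez10} precisely as the combination of Theorem \ref{thmev11} with Propositions \ref{propex50}, \ref{propey10} and \ref{propey15} (together with Lemma \ref{lemex12}(iii) and Proposition \ref{propep05}). Your expansion — including the ordering subtlety in the forward direction of (ii) and the observation that $\lim_{\ep\to 0}\si^\ep_t\leq\si^0_t$, $\lim_{\ep\to 0}\tau^\ep_t\leq\tau^0_t$ makes the truncation in Proposition \ref{propex50}(iii) vacuous — is exactly the intended assembly of these results.
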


\def\skipold{

\subsection{Zero-Sum Dynkin Games: General Case (old)}

The goal of this section is to study a more general class of zero-sum Dynkin games, associating with the payoff
\begin{gather}\label{eqff13}
R(\tau , \si ) = \I_{\{ \tau < \si \}}\, X_{\tau }  +  \I_{\{ \si  < \tau \}}\,
Y_{\si }  +  \I_{\{ \si  = \tau \}}\, Z_{\si }.
\end{gather}
As mentioned in (??),  it is common to make the assumption $X\leq Z\leq Y$. We will attempt to construct a more general set of assumptions allowing, in principle, for any relative positions of $X, Y$ and $Z$.

We begin by introducing the following notation. We adopt the usual convention that $\inf \emptyset = T$.

\begin{definition} \label{assff14}
\begin{enumerate}[(i)]
\item\label{assff141} We denote by $\rho$ the first moment when $X$ crosses the random level $Y$, that is,
\begin{gather}
\rho:=\inf \big\{ t\in [0, T]\,|\, X_t \geq Y_t \big\}.
\end{gather}
\item\label{assff142} Given $\tau,\si \in \STOP_{[0,T]}$, we set
\begin{align}
\htau &:= \inf \big\{ t\in [\tau, T]\,|\, Z_t \geq X_t \wedge Y_t \big\},\\
\tsi &:= \inf \big\{ t\in [\si, T]\,|\, Z_t \leq X_t \vee Y_t \big\}.
\end{align}
\item\label{assff143} We denote by $Z'_\rho$ the $\Filt_\rho$-measurable random variable given as
\begin{gather}\label{eqff18}
Z'_\rho = \big(Z_\rho \vee (X_\rho \wedge Y_\rho)\big) \wedge (X_\rho \vee Y_\rho) = \big(Z_\rho \wedge (X_\rho \vee Y_\rho)  \big)\vee (X_\rho \wedge Y_\rho).
\end{gather}
\end{enumerate}
\end{definition}

Since $X, Y$ and $Z$ are $\FF$-adapted and right-continuous, the stopping times $\rho, \htau$ and $\tsi$ are well defined. Furthermore, since $X_\rho \wedge Y_\rho \leq X_\rho \vee Y_\rho$, the random variable $Z'_\rho$ is also well defined.

\begin{lemma}\label{lemff200} The following properties hold:
\begin{enumerate}[(P1)]
\item \label{lemff201} $X_\rho \wedge Y_\rho \leq Z'_\rho \leq X_\rho \vee Y_\rho $;
\item \label{lemff203} $Y_t > X_t$ on the event $\{t<\rho\}$;
\item \label{lemff205} $X_\rho \geq Y_\rho$ on the event $\{\rho<T\}$.
\end{enumerate}
\end{lemma}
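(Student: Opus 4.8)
The plan is to verify the three properties separately, since they draw on different features of the construction: (P1) is purely algebraic, (P2) is immediate from the definition of the infimum, and (P3) is the only one that genuinely uses right-continuity.

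First I would dispose of (P1). Writing $a:=X_\rho\wedge Y_\rho$ and $b:=X_\rho\vee Y_\rho$, so that $a\le b$, the variable in \eqref{eqff18} is $Z'_\rho=(Z_\rho\vee a)\wedge b$, i.e.\ $Z_\rho$ clamped to the interval $[a,b]$. The bound $Z'_\rho\le b$ is immediate, being a minimum with $b$. For the lower bound, $Z_\rho\vee a\ge a$ and $b\ge a$ together give $(Z_\rho\vee a)\wedge b\ge a$; hence $a\le Z'_\rho\le b$, which is exactly (P1). The equality of the two expressions in \eqref{eqff18} is the standard clamping identity $(z\vee a)\wedge b=(z\wedge b)\vee a$, valid whenever $a\le b$, and may be confirmed by a short case split on the position of $z$ relative to $a$ and $b$.

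For (P2) I would argue directly from $\rho=\inf\{s\in[0,T]:X_s\ge Y_s\}$. On the event $\{t<\rho\}$ the time $t$ is a strict lower bound for the set $A:=\{s:X_s\ge Y_s\}$, so $t\notin A$; negating the defining inequality yields $X_t<Y_t$, that is $Y_t>X_t$. No continuity is used here.

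The substantive step, and the one I expect to be the main obstacle, is (P3), where right-continuity must be exploited to catch the crossing at $\rho$. Set $g:=X-Y$, which is right-continuous, and note $A=\{g\ge 0\}$ and $\rho=\inf A$. Because $\inf\emptyset=T$ by convention, the hypothesis $\rho<T$ forces $A\neq\emptyset$. I would then split into two cases. If $\rho\in A$, then $g_\rho\ge 0$ directly. If $\rho\notin A$, then since $\rho=\inf A$ rules out points of $A$ strictly below $\rho$, there must exist points of $A$ arbitrarily close to $\rho$ from the right; choosing $s_n\in A\cap(\rho,\rho+1/n)$ produces $s_n\downarrow\rho$ with $g_{s_n}\ge 0$, whence right-continuity gives $g_\rho=\lim_n g_{s_n}\ge 0$. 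In either case $X_\rho\ge Y_\rho$ on $\{\rho<T\}$, establishing (P3). The delicate points are confirming that the right-approximating sequence genuinely exists when $\rho\notin A$ (this is precisely where $\rho=\inf A$ is invoked, to exclude points of $A$ below $\rho$) and observing that the convention $\inf\emptyset=T$ is exactly what confines the claim to the event $\{\rho<T\}$.
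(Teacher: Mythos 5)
Your proof is correct and follows the same route as the paper, which simply records that (P1) is immediate from \eqref{eqff18} and that (P2)--(P3) follow from the definition of $\rho$ together with the right-continuity of $X$ and $Y$; your write-up supplies exactly these details (the clamping identity for (P1), $t<\inf A\Rightarrow t\notin A$ for (P2), and the right-approximating sequence $s_n\downarrow\rho$ in $A$ for (P3)). In particular you correctly identify that the convention $\inf\emptyset=T$ is what restricts (P3) to the event $\{\rho<T\}$, a point the paper leaves implicit.
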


\begin{proof} The first property is immediate from \eqref{eqff18}.
Properties (P\ref{lemff203}) and (P\ref{lemff205})  follow from the definition of $\rho$
and the right-continuity of processes $X$ and $Y$.
\end{proof}

We conjecture that the following conditions are sufficient for the existence of a Nash equilibrium.

\begin{assumption}\label{assff15}
Let the processes $X, Y$ and $Z$ satisfy the following conditions:
\begin{enumerate}[(i)]
\item\label{assff151} $X_T\wedge Y_T \leq Z_T \leq X_T\vee Y_T$;
\item\label{assff152} On $[0,\rho]$, $X$ and $-Y$ are left upper semi-continuous (only have positive jumps); furthermore, $\limsup_{t \uparrow \rho} X_t \leq Y_\rho$ and $\liminf_{t \uparrow \rho} Y_t \geq X_\rho$;
\item\label{assff153} For any $\tau,\si \in \STOP_{[0,\rho]}$, the process $X\wedge Y$ is non-decreasing on $[\tau,\htau]$ and the process  $X\vee Y$ is non-increasing on $[\si,\tsi]$.
\end{enumerate}
\end{assumption}

\begin{remark}\label{remff161}
Assumption \ref{assff15} \eqref{assff153} is equivalent to the following condition:
for each $\omega\in\Omega$, on the event $\{t \leq \rho(\omega)\}$,
\begin{alignat*}{4}
Z_t &< X_t \wedge Y_t & \quad&\implies\quad& X_s \wedge Y_s &\leq X_u\wedge Y_u, &\quad \forall\  s,u \in [t,\hat t],\, s\leq u,\\
Z_t &> X_t \vee Y_t & \quad&\implies\quad&  X_s \vee Y_s &\geq  X_u \vee Y_u, &\quad \forall\  s,u \in [t,\hat t],\, s\leq u.
\end{alignat*}
(more explanation)
\end{remark}

Under Assumption \ref{assff15}, we have the following additional properties (P\ref{lemff204})--(P\ref{lemff207}).

\begin{lemma}\label{lemff20} Let $\tau,\si \in\Tstrat_{[0,T]}$ be arbitrary $\FF$-stopping times, and let $\rho,\htau,\tsi$ and $Z'_\rho$ be as in Definition \ref{assff14}. Under Assumption \ref{assff15}, we have the following properties:
\begin{enumerate}[(P1)] \setcounter{enumi}{3}
\item\label{lemff204} On the event $\{\tau<\rho\}$, we have $X_t \geq X_t \wedge Y_t \geq X_\tau$ for $\tau\leq t\leq \htau$;
\item\label{lemff206} On the event $\{\tau\leq \rho<\htau\}$, we have $X_t \wedge Y_t\geq Z'_\rho > Z_\rho$ for $\rho\leq t\leq \htau$;
\\ On the event $\{\si \leq \rho<\tsi\}$, we have $X_t \vee Y_t\leq Z'_\rho < Z_\rho$ for $\rho\leq t\leq \hat \tsi$;
\item\label{lemff207} $Z_{\htau} \geq X_{\htau} \wedge Y_{\htau}$ and $Z_{\tsi} \leq X_{\tsi} \vee Y_{\tsi}$.
\end{enumerate}
\end{lemma}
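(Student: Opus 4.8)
The plan is to establish the three pathwise assertions one at a time, in each case reducing the claim to the defining relations of $\rho$, $\htau$, $\tsi$ and $Z'_\rho$, to the elementary inequalities of Lemma \ref{lemff200}, and to the monotonicity built into part (iii) of Assumption \ref{assff15}. One preliminary point deserves attention: Assumption \ref{assff15}(iii) is phrased for stopping times in $\STOP_{[0,\rho]}$, whereas here $\tau,\si\in\STOP_{[0,T]}$ are arbitrary. On every event appearing below one has $\tau\leq\rho$ (resp.\ $\si\leq\rho$), so I would first localise by replacing $\tau$ with $\tau\wedge\rho\in\STOP_{[0,\rho]}$ (resp.\ $\si$ with $\si\wedge\rho$). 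Since $\widehat{\tau\wedge\rho}=\htau$ on $\{\tau\leq\rho\}$ and the analogous identity holds for $\tsi$ on $\{\si\leq\rho\}$, the pathwise monotonicity granted by Assumption \ref{assff15}(iii) transfers verbatim to the intervals $[\tau,\htau]$ and $[\si,\tsi]$ on the relevant events.

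For the first property I would argue on $\{\tau<\rho\}$. The inequality $X_t\geq X_t\wedge Y_t$ is trivial, so only $X_t\wedge Y_t\geq X_\tau$ needs proof. Evaluating the inequality $Y_t>X_t$ on $\{t<\rho\}$ from Lemma \ref{lemff200} at $t=\tau$ gives $Y_\tau>X_\tau$, whence $X_\tau\wedge Y_\tau=X_\tau$; since $X\wedge Y$ is non-decreasing on $[\tau,\htau]$ we obtain $X_t\wedge Y_t\geq X_\tau\wedge Y_\tau=X_\tau$ for $\tau\leq t\leq\htau$. For the second property I would first identify $Z'_\rho$ on each event from the clamping formula \eqref{eqff18}. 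On $\{\tau\leq\rho<\htau\}$ the relations $\rho\geq\tau$ and $\rho<\htau=\inf\{t\geq\tau:Z_t\geq X_t\wedge Y_t\}$ force $Z_\rho<X_\rho\wedge Y_\rho$, and substituting into \eqref{eqff18} collapses the clamp to $Z'_\rho=X_\rho\wedge Y_\rho$, which already yields $Z'_\rho>Z_\rho$; the bound $X_t\wedge Y_t\geq Z'_\rho$ for $\rho\leq t\leq\htau$ then follows from the monotonicity of $X\wedge Y$ on $[\tau,\htau]\supseteq[\rho,\htau]$. The symmetric statement on $\{\si\leq\rho<\tsi\}$ is handled identically: $\rho<\tsi$ gives $Z_\rho>X_\rho\vee Y_\rho$, so \eqref{eqff18} collapses to $Z'_\rho=X_\rho\vee Y_\rho<Z_\rho$, and the non-increase of $X\vee Y$ on $[\si,\tsi]$ propagates $X_t\vee Y_t\leq Z'_\rho$ along $[\rho,\tsi]$.

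For the third property I would exploit right-continuity. By Assumption \ref{assff15}(i) we have $Z_T\geq X_T\wedge Y_T$, so $T$ lies in the set $A:=\{t\in[\tau,T]:Z_t\geq X_t\wedge Y_t\}$ and $\htau=\inf A$ is a genuine element of $[\tau,T]$. Choosing $t_n\in A$ with $t_n\downarrow\htau$ and passing to the limit in $Z_{t_n}\geq X_{t_n}\wedge Y_{t_n}$ along the right-continuous (c\`adl\`ag) processes $Z$ and $X\wedge Y$ yields $Z_{\htau}\geq X_{\htau}\wedge Y_{\htau}$; the companion bound $Z_{\tsi}\leq X_{\tsi}\vee Y_{\tsi}$ follows in the same way from the other half of Assumption \ref{assff15}(i).

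The main obstacle is the transfer of Assumption \ref{assff15}(iii) from $\STOP_{[0,\rho]}$ to the arbitrary stopping times appearing in the statement: I must check carefully that $\widehat{\tau\wedge\rho}$ agrees with $\htau$ on $\{\tau\leq\rho\}$ (and the analogous identity for $\tsi$), so that the pathwise monotonicity is available on precisely the intervals in question. A secondary delicate point is the closedness-at-the-infimum step in the third property, where it is the right-continuity of $Z$ and of $X\wedge Y$, rather than the left upper semicontinuity of Assumption \ref{assff15}(ii), that guarantees the defining inequality survives the passage to $t=\htau$.
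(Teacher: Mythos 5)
Your proof is correct and follows essentially the same route as the paper's: (P4) via the identity $X_\tau\wedge Y_\tau=X_\tau$ from (P2) combined with the monotonicity of $X\wedge Y$ in Assumption \ref{assff15}(iii); (P5) via observing that $\rho\in[\tau,\htau)$ forces $Z_\rho<X_\rho\wedge Y_\rho$, so the clamp \eqref{eqff18} collapses to $Z'_\rho=X_\rho\wedge Y_\rho$; and (P6) from Assumption \ref{assff15}(i) together with right-continuity. Your two additional points of care --- localising to $\tau\wedge\rho\in\STOP_{[0,\rho]}$ so that Assumption \ref{assff15}(iii) applies (with $\widehat{\tau\wedge\rho}=\htau$ on $\{\tau\leq\rho\}$), and the explicit sequence argument $t_n\downarrow\htau$ in (P6) --- merely spell out details the paper's terser proof leaves implicit, and both are handled correctly.
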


\begin{proof}
\begin{enumerate}
\item[(P\ref{lemff204})] By Assumption \ref{assff15} \eqref{assff153}, we have $X_u \wedge Y_u \geq X_t \wedge Y_t$. Then from (P\ref{lemff203}), $X_t \wedge Y_t=X_t$.
On the event $\{t<\rho\}$, we have $X_u \geq X_u \wedge Y_u \geq X_t$ for $t\leq u\leq \hat t$;
\item[(P\ref{lemff206})] We will only prove the first statement; the second can be shown using similar arguments.
Since $\tau'\in\Tstrat_{[0,\rho]}$, we have $\rho \in [\tau',\htau')$. From the definition of $\htau'$ \eqref{eqff191} and Assumption \ref{assff15} \eqref{assff153}, we have $Z_\rho< X_\rho \wedge Y_\rho \leq X_u \wedge Y_u$ for $\rho\leq u\leq \htau'$. Finally, by the definition of $Z'_\rho$, we have that $Z'_\rho = X_\rho \wedge Y_\rho$, completing the statement.
\item[(P\ref{lemff207})] This follows from the definitions of $\htau$ and $\tsi$, Assumption \ref{assff15} \eqref{assff151}, and the right-continuity of $X, Y$ and $Z$.
\end{enumerate}
\end{proof}

Before dealing with the Dynkin game $\Game$ with payoff given by \eqref{eqff13}, we will examine Dynkin games $\Game'$ and $\Game''$ with the following modified payoff functions.
\begin{align}
R'(\tau , \si ) &= \I_{\{ \tau < \si \wedge \rho \}}\, X_{\tau }  +  \I_{\{ \si  < \tau \wedge \rho \}}\,
Y_{\si } + \I_{\{ \tau = \si < \rho \}}\, X_{\tau } +  \I_{\{ \rho \leq \tau \wedge \si \}}\, Z'_{\rho },\label{eqff20}\\
R''(\tau , \si ) &= \I_{\{ \tau < \si \wedge \rho \}}\, X_{\tau }  +  \I_{\{ \si  < \tau \wedge \rho \}}\,
Y_{\si } + \I_{\{ \tau = \si < \rho \}}\, Z_{\tau } +  \I_{\{ \rho \leq \tau \wedge \si \}}\, Z'_{\rho },\label{eqff201}
\end{align}

\begin{remark}\label{remff162}
In both $\Game'$ and $\Game''$, the payoff for stopping after $\rho$ is constant; in other words, for any $\tau,\si \in\Tstrat_{[0,T]}$,
\begin{gather*}
R'(\tau , \si ) = R'(\tau , \si \wedge \rho ) = R'(\tau \wedge \rho , \si )  = R'(\tau \wedge \rho , \si \wedge \rho ),\\
R''(\tau , \si ) = R''(\tau , \si \wedge \rho ) = R''(\tau \wedge \rho , \si )  = R''(\tau \wedge \rho , \si \wedge \rho ).
\end{gather*}
Hence when analysing $\Game'$ and $\Game''$, we can assume, without loss of generality, that $\tau,\si \in\Tstrat_{[0,\rho]}$.
\end{remark}

To see that $\Game'$ belongs to the class described by \eqref{eqff11}, it suffices to note that:
\begin{itemize}
\item From (P\ref{lemff203}), $X_t\leq Y_t$ for $t<\rho$;
\item From (P\ref{lemff201}) and Assumption \ref{assff15} \eqref{assff152},
\[
\limsup_{t \uparrow \rho} X_t \leq X_\rho \wedge Y_\rho \leq Z'_\rho \leq X_\rho \vee Y_\rho \leq \liminf_{t \uparrow \rho} Y_t .
\]
\end{itemize}
By Theorem \ref{thmff11}, $\Game'$ has a Nash equilibrium $(\tau',\si')$ with $\tau',\si'\in \Tstrat_{[0,\rho]}$. This will be used as a starting point to construct $(\htau', \tsi')$, where $\htau', \tsi'$ are taken as in Definition \ref{assff14} \eqref{assff142}.
Out ultimate goal is to prove that the pair $(\htau', \tsi')$ is a Nash equilibrium for all three games $\Game', \Game''$ and $\Game$.

\begin{proposition}\label{lemff21}
\begin{enumerate}[(i)]
\item\label{lemff211} For any $\tau, \si\in\STOP_{[0,\rho]}$, we have
$R'(\htau , \si ) \geq R'(\tau , \si ) \geq R'(\tau , \tsi).$
\item\label{lemff212} The pair of stopping times $(\htau', \tsi')$
is a Nash equilibrium of $\Game'$.
\item\label{lemff213} On the event $\{\htau' = \tsi'\}$, we have $\{\htau' = \tsi' \geq \rho\}$ almost surely.
\item\label{lemff214} The pair of stopping times $(\htau', \tsi')$ is a Nash equilibrium of $\Game''$.
\item\label{lemff215} $R(\htau' , \tsi' ) = R''(\htau' , \tsi' )$.
\end{enumerate}
\end{proposition}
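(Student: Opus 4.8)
The plan is to treat the five assertions in order, as each feeds into the next. Throughout I use that, by the observations preceding the proposition, $\Game'$ is a standard Dynkin game on $[0,\rho]$ (max‑payoff $X$, min‑payoff $Y$, coincidence payoff $X$ before $\rho$ and $Z'_\rho$ at $\rho$). Hence by Theorem \ref{thmff11} it has a value $V'$ with $X\le V'\le Y$ on $[0,\rho]$ and a Nash equilibrium $(\tau',\si')$; moreover the analogue of Proposition \ref{propev20}(iv) for $\Game'$ shows that $V'$ is a submartingale on $[0,\tau']$ and a supermartingale on $[0,\si']$. These two facts about $V'$ will do the real work in part (iii).

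For (i) I would prove the upper inequality $R'(\htau,\si)\ge R'(\tau,\si)$ pathwise, the lower one following by the symmetry of the construction. By Remark \ref{remff162} one may take $\tau,\si\in\STOP_{[0,\rho]}$ and use $\htau\wedge\rho$ as the effective stopping time. The argument is a finite case split on the sign of $\tau-\si$ and on whether $\htau=\tau$ or $\htau>\tau$: in each case $R'(\htau,\si)\ge R'(\tau,\si)$ follows from (P\ref{lemff204}) (which gives $X_t\wedge Y_t\ge X_\tau$ on $[\tau,\htau]$), together with (P\ref{lemff201}) ($Z'_\rho\ge X_\rho\wedge Y_\rho$) for the terminal contribution and (P\ref{lemff203}) ($X<Y$ before $\rho$) for the coincidence cases. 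Part (ii) is then immediate: applying (i) with $\tau=\tau'$ and the Nash property of $(\tau',\si')$ gives $\E_\P(R'(\htau',\si)\mid\Filt_t)\ge V'_t$ for every $\si$, symmetrically $\E_\P(R'(\tau,\tsi')\mid\Filt_t)\le V'_t$ for every $\tau$, and evaluating both at $(\htau',\tsi')$ pins $\E_\P(R'(\htau',\tsi')\mid\Filt_t)=V'_t$, which is exactly the equilibrium property.

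Part (iii) is the crux and the main obstacle. Suppose, for contradiction, that $\htau'=\tsi'=s<\rho$ on a set of positive probability. If $\tau'<s$ and $\si'<s$, then on $[\tau'\vee\si',s)$ both extensions are strictly active, so $Z_t<X_t\wedge Y_t=X_t$ and $Z_t>X_t\vee Y_t=Y_t$ simultaneously, forcing $Y_t<X_t$ against (P\ref{lemff203}). If instead $\htau'=\tau'$ and $\tsi'=\si'$, then $\tau'=\si'=s$, which would require $V'_s=X_s=Y_s$, again impossible before $\rho$. This leaves the two mixed cases. If $\tau'<s=\si'=\htau'$, then $Z_{\tau'}<X_{\tau'}$ triggers, via Assumption \ref{assff15}(\ref{assff153}), monotonicity of $X\wedge Y=X$ on $[\tau',\htau']$, whence $X_{\si'}\ge X_{\tau'}$ and so $V'_{\si'}=Y_{\si'}>X_{\si'}\ge X_{\tau'}=V'_{\tau'}$; but the supermartingale property of $V'$ on $[0,\si']$ gives $\E_\P(V'_{\si'}\mid\Filt_{\tau'})\le V'_{\tau'}$ on $\{\tau'<\si'\}$, a contradiction. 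The case $\si'<s=\tau'=\tsi'$ is symmetric, using monotonicity of $X\vee Y=Y$ and the submartingale property of $V'$ on $[0,\tau']$. The measurability needed to localise these inequalities is handled by conditioning on $\Filt_{\tau'}$ (resp. $\Filt_{\si'}$) and using that the triggering event $\{Z_{\tau'}<X_{\tau'}\}$ is $\Filt_{\tau'}$‑measurable. I expect this interplay between the monotonicity hypothesis and the martingale structure of $V'$ to be the hardest part to make fully rigorous.

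Finally, parts (iv) and (v) build on (i) and (ii) together with the defining properties of $\htau',\tsi'$. For (iv), the min‑player's deviations are controlled pathwise: on $\{\htau'=\si<\rho\}$ one has $Z_{\htau'}\ge X_{\htau'}$ by definition of $\htau'$, so $R''(\htau',\si)\ge R'(\htau',\si)$, and (ii) yields $\E_\P(R''(\htau',\si)\mid\Filt_t)\ge V'_t$. The max‑player's deviations need a Snell‑envelope bound: writing $Q^{\tsi'}$ for the Snell envelope of $R'(\cdot,\tsi')$, one checks $R''(\tau,\tsi')\le Q^{\tsi'}_\tau$ pathwise — the only non‑trivial event is $\{\tau=\tsi'<\rho\}$, where $R''=Z_\tau\le Y_\tau$ while stopping strictly after $\tsi'$ already secures $Y_\tau$ for the envelope — and then optional sampling together with the optimality of $\tsi'$ in $\Game'$ gives $\E_\P(R''(\tau,\tsi')\mid\Filt_t)\le V'_t$; the two bounds identify the value and confirm the equilibrium. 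For (v) one compares $R$ and $R''$ at the realised stop $\htau'\wedge\tsi'$: they can only differ when stopping occurs at or after $\rho$, and since $\htau'>\rho$ and $\tsi'>\rho$ cannot both hold by (P\ref{lemff206}), any such stop occurs exactly at $\rho$; there (P\ref{lemff205}) gives $Z'_\rho=X_\rho\vee Y_\rho=X_\rho$ (resp. $X_\rho\wedge Y_\rho=Y_\rho$) for a single stop, while a coincident stop forces $X_\rho\wedge Y_\rho\le Z_\rho\le X_\rho\vee Y_\rho$, i.e. $Z'_\rho=Z_\rho$. Hence $R(\htau',\tsi')=R''(\htau',\tsi')$.
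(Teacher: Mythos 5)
Parts (i), (ii), (iv) and (v) of your proposal are correct and essentially track the paper's own proof: (i) is the same pathwise case split driven by (P2), (P4) and (P1); (ii) is the same sandwich of the Nash property of $(\tau',\si')$ between the two inequalities of (i); in (iv) your Snell-envelope bound for the max-player's deviations is equivalent to the paper's explicit modification $\bar\si$ of the deviating stopping time; and in (v) you are in fact more careful than the paper, since you use (P5) to upgrade the one-sided bound $X_\rho\vee Y_\rho\geq Z'_\rho$ to the required equality on $\{\rho=\htau'<\tsi'\}$, and you also treat the coincident case $\htau'=\tsi'=\rho$ via (P6), which the paper's case list omits.

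The genuine gap is in (iii), exactly where you flag difficulty: the mixed case $\tau'<s=\si'=\htau'=\tsi'<\rho$. Your argument needs the pointwise identity $V'_{\si'}=Y_{\si'}$ on $\{\tau'<\si'\}$, and this is not a consequence of the Nash property. On $\{\tau'<\si'\}$ the game under $(\tau',\si')$ has already ended at $\tau'$, so the equilibrium constrains $\si'$ after $\tau'$ only through the max player's deviations, which gives an averaged inequality of the form $\E_\P\big(Y_{\si'}\I_{\{\si'<\rho\}}+Z'_\rho\I_{\{\rho\leq\si'\}}\,\big|\,\Filt_{\tau'}\big)\leq X_{\tau'}$, not a pointwise one. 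A two-period example shows the identity can fail: take times $0,1,2$, $X_0=0$, $Y_0$ large, a fair coin at time $1$ with $(X_1,Y_1)=(-10,5)$ on heads and $(-10,-5)$ on tails, all payoffs $-10$ at time $2$, coincidence payoff equal to $X$; then $V'_0=0=X_0$, $V'_1=-10$, and the pair $\tau'=0$, $\si'=1$ on heads, $\si'=2$ on tails is a Nash equilibrium even though $Y_{\si'}=5>V'_{\si'}=-10$ on heads, because exploiting the overshoot would force the max player to give up the payoff $0$ and absorb $-10$ on tails. Moreover, even granting both identities $V'_{\si'}=Y_{\si'}$ and $V'_{\tau'}=X_{\tau'}$ on the bad event, the supermartingale bound $\E_\P(V'_{\si'}\mid\Filt_{\tau'})\leq V'_{\tau'}$ cannot be contradicted by a strict reverse inequality holding only on an event that is not $\Filt_{\tau'}$-measurable; your bad event depends on the paths of $X,Y,Z$ and on $\si'$ strictly after $\tau'$, and conditioning on $\{Z_{\tau'}<X_{\tau'}\}\in\Filt_{\tau'}$ does not localise it.

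The repair is the paper's argument, for which you already hold all the pieces: by your part (ii), $(\htau',\tsi')$ is itself a Nash equilibrium of $\Game'$, so there is no need to return to $(\tau',\si')$ or to the martingale structure of $V'$ at all. On $A:=\{\htau'=\tsi'<\rho\}$, which is determined at time $\htau'$, let the max player play $\bar\tau:=\rho\,\I_{A}+\htau'\,\I_{A^c}$, a stopping time. This changes the outcome only on $A$, where the $\Game'$-payoff changes from the coincidence value $X_{\htau'}$ to $Y_{\tsi'}=Y_{\htau'}$, strictly larger before $\rho$ by (P2). Hence $\P(A)>0$ would give $\E_\P(R'(\bar\tau,\tsi'))>\E_\P(R'(\htau',\tsi'))$, contradicting (ii). This single deviation disposes of all four of your cases at once, so the case analysis (and with it the problematic mixed cases) is unnecessary.
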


\begin{proof} (i) Since $\tau, \si\in\STOP_{[0,\rho]}$, we have $\tau \leq \htau \wedge \rho$ and $\si\leq \tsi  \wedge \rho$. We will now consider several cases.
\begin{itemize}
\item On the event $\{\htau \leq \si \} \cap \{\htau < \rho \}$, we have $\{\tau \leq \si \} \cap \{\tau < \rho \}$ and hence by (P\ref{lemff204}),
\[
R'(\htau , \si ) = X_{\htau } \geq X_{\tau } = R'(\tau , \si ).
\]
\item On the event $\{ \si < \htau \wedge \rho \}$, we have two cases
\begin{itemize}
\item If $\tau \leq \si < \htau \wedge \rho$ then, by (P\ref{lemff203}) and  (P\ref{lemff204}),
\[
R'(\htau , \si ) = Y_{\si } > X_{\si } \geq X_{\tau}= R'(\tau , \si ).
\]
\item If $\si < \tau \leq \htau \wedge \rho$ then
\[
R'(\htau , \si ) = Y_{\si }= R'(\tau , \si ).
\]
\end{itemize}
\item On the event $\{\rho \leq  \htau \wedge \si  \}$, we have two cases
\begin{itemize}
\item If $\tau < \rho \leq  \htau \wedge \si$ then, by (P\ref{lemff201}) and (P\ref{lemff204}),
\[
R'(\htau , \si ) = Z'_{\rho } \geq X_{\rho } \wedge Y_{\rho} \geq X_{\tau}= R'(\tau , \si ).
\]
\item If $\rho \leq  \tau \wedge \si$ then
\[
R'(\htau , \si ) = Z'_{\rho } = R'(\tau , \si ).
\]
\end{itemize}
\end{itemize}
In all cases we have $R'(\htau , \si )\geq R'(\tau , \si )$, proving the upper inequality. The lower inequality can be established by similar arguments.

\noindent (ii)  Recall that $(\tau',\si')$ is a Nash equilibrium with $\tau',\si' \in\STOP_{[0,\rho]}$. Applying \eqref{lemff211} yields
\begin{gather}
\E\big(R'(\htau' , \tsi' )\big) \geq \E\big(R'(\tau' , \tsi' )\big) \geq \E\big(R'(\tau' , \si')\big)
\geq \E\big(R'(\htau' , \si' )\big) \geq \E\big(R'(\htau' , \tsi' )\big). \label{eqff212}
\end{gather}
Hence all terms of \eqref{eqff212} are equal and $R'(\htau' , \tsi' ) = R'(\tau' , \si')$. Now, for any $\tau,\si\in\STOP_{[0,T]}$,
\begin{align}
\E\big(R'(\htau' , \si )\big) \geq \E\big(R'(\tau' , \si )\big) \geq \E\big(R'(\tau' , \si')\big) &= \E\big(R'(\htau' , \tsi')\big),\\
\E\big(R'(\tau , \tsi' )\big) \leq \E\big(R'(\tau , \si' )\big) \leq \E\big(R'(\tau' , \si')\big) &= \E\big(R'(\htau' , \tsi')\big).
\end{align}
Therefore, the pair $(\htau' , \tsi')$ is a Nash equilibrium of $\Game'$.

\noindent (iii) Assume the contrary, that is, $\P\{\htau' = \tsi' < \rho\}>0$. Consider the $\FF$-stopping time
\[
\bar \tau = \rho \I_{\{\htau' = \tsi' < \rho\}} + \htau' (1-  \I_{\{\htau' = \tsi' < \rho\}}).
\]
(explain why $\bar \tau$ is a stopping time). By (P\ref{lemff203}), the inequality $Y_t>X_t$ holds for $t<\rho$
and thus we obtain
\begin{align*}
\E\big(R'(\bar \tau, \tsi')\big) &=  \E\big(R'(\htau', \tsi') (1- \I_{\{\htau' = \tsi' < \rho\}}) + Y_{\htau'} \I_{\{\htau' = \tsi' < \rho\}}\big)\\
&> \E\big(R'(\htau', \tsi') (1- \I_{\{\htau' = \tsi' < \rho\}}) + X_{\htau'} \I_{\{\htau' = \tsi' < \rho\}}\big)\\
& = \E\big(R'(\htau', \tsi')\big).
\end{align*}
This contradicts the property established in part \eqref{lemff212} that $(\htau', \tsi')$ is a Nash equilibrium.

\noindent (iv) It suffices to show that
\begin{gather}\label{eqff214}
\E\big(R''(\htau' , \si )\big) \geq \E\big(R''(\htau' , \tsi' )\big) \geq \E\big(R''(\tau , \tsi' )\big),\quad \forall\,\si,\tau\in\Tstrat_{[0,\rho]}.
\end{gather}
From \eqref{eqff20} and \eqref{eqff201},
\begin{gather*}
R'(\tau , \si) \neq R''(\tau , \si ) \quad\implies\quad  \tau = \si < \rho.
\end{gather*}
From \eqref{lemff213}, we get $\P(\{\htau' = \tsi' < \rho\})=0$, and thus
\begin{gather*}
\E \big(R'(\htau' , \tsi' )\big) = \E \big(R''(\htau' , \tsi' )\big).
\end{gather*}
To establish the upper inequality of \eqref{eqff214}, we combine (P\ref{lemff203}) and (P\ref{lemff207}) to obtain $Z_{\htau'} \geq X_{\htau'}$ on the event $\{\htau' < \rho\}$. Consequently, for any $\si\in\Tstrat_{[0,\rho]}$,
\begin{align*}
R''(\htau' , \si ) &= Z_{\htau'} \I_{\{\htau' = \si < \rho\}} + R''(\htau' , \si ) (1-\I_{\{\htau' = \si < \rho\}})\\
&\geq X_{\htau'} \I_{\{\htau' = \si < \rho\}} + R''(\htau' , \si ) (1-\I_{\{\htau' = \si < \rho\}})\\
&= R'(\htau', \rho ) \I_{\{\htau' = \si < \rho\}} + R'(\htau' , \si ) (1-\I_{\{\htau' = \si < \rho\}})\\
&= R'(\htau', \bar \si ) \\
&\geq R'(\htau', \tsi'),
\end{align*}
where $\bar\si = \rho \I_{\{\htau' = \si < \rho\}} + \si (1-\I_{\{\htau' = \si < \rho\}})$. Consequently,
\[
\E\big( R''(\htau' , \si )\big) \geq \E\big( R'(\htau', \tsi')\big) = \E\big( R''(\htau', \tsi')\big),
\]
as required. The lower inequality of \eqref{eqff214} can be shown using similar arguments.

\noindent (v) On the event $\{\htau'\wedge\tsi' < \rho\}$, the result follows directly from the definition of $R$ and $R''$.
On the event $\{\htau'\wedge\tsi' > \rho\}$, by (P\ref{lemff206}), we have $Z_\rho > Z'_\rho > Z_\rho$,
which is a contradiction.

 We are thus left with the cases $\{\rho = \htau' < \tsi'\}$ and $\{\rho = \tsi' < \htau'\}$. For the case of $\{\rho = \htau' < \tsi'\}$, we have $\rho<T$ and thus, by (P\ref{lemff205}), $X_\rho \geq Y_\rho$. Therefore, by (P\ref{lemff201}),
\[
R(\htau' , \tsi' ) = X_\rho = X_\rho \vee Y_\rho \geq Z'_\rho = R''(\htau' , \tsi' )
\]
as required. The case of $\{\rho = \tsi' < \htau'\}$ can be dealt with similarly.
\end{proof}

 We are in a position to prove the main result of this section. Proposition \ref{lemff21}, \eqref{lemff212} and \eqref{lemff214} showed that $(\htau', \tsi')$ is a Nash equilibrium of $\Game'$ and $\Game''$. The next result shows that it is also a Nash equilibrium of $\Game$.

\begin{proposition}\label{propff25}
The pair of stopping times $(\htau', \tsi')$ is a Nash equilibrium of the Dynkin game $\Game$, that is,
\begin{gather}\label{eqff251}
\E\big(R(\htau' , \si )\big) \geq \E\big(R(\htau' , \tsi' )\big) \geq \E\big(R(\tau , \tsi' )\big),\quad \forall\,\si,\tau\in\Tstrat_{[0,T]}.
\end{gather}
\end{proposition}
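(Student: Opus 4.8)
The plan is to transfer the Nash equilibrium property of $(\htau',\tsi')$ from the auxiliary game $\Game''$ to the original game $\Game$. By Proposition~\ref{lemff21}\eqref{lemff214}, the pair $(\htau',\tsi')$ is already a Nash equilibrium of $\Game''$, so that
\[
\E\big(R''(\htau',\si)\big) \;\geq\; \E\big(R''(\htau',\tsi')\big) \;\geq\; \E\big(R''(\tau,\tsi')\big),\qquad \forall\,\si,\tau\in\Tstrat_{[0,T]},
\]
where the extension from $\Tstrat_{[0,\rho]}$ to $\Tstrat_{[0,T]}$ is justified by Remark~\ref{remff162}. Moreover, Proposition~\ref{lemff21}\eqref{lemff215} supplies the equality $R(\htau',\tsi')=R''(\htau',\tsi')$. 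Consequently, to obtain \eqref{eqff251} it suffices to establish the two pointwise comparison inequalities
\[
\text{(A)}\quad R(\htau',\si)\geq R''(\htau',\si)\ \ \forall\,\si\in\Tstrat_{[0,T]},\qquad
\text{(B)}\quad R''(\tau,\tsi')\geq R(\tau,\tsi')\ \ \forall\,\tau\in\Tstrat_{[0,T]}.
\]
Indeed, chaining (A) with the first $\Game''$-inequality and the equality $R(\htau',\tsi')=R''(\htau',\tsi')$ yields the upper bound of \eqref{eqff251}, while chaining that equality, the second $\Game''$-inequality, and (B) yields the lower bound.

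To prove (A) I would compare $R(\htau',\si)$ and $R''(\htau',\si)$ pointwise, splitting according to the position of $\htau'$ and $\si$ relative to the crossing time $\rho$. On the event $\{\htau'\wedge\si<\rho\}$ the active branches of the definitions \eqref{eqff13} and \eqref{eqff201} coincide, so the two payoffs are equal. The content lies on $\{\rho\leq\htau'\wedge\si\}$, where $R''(\htau',\si)=Z'_\rho$: if $\htau'<\si$ then $R(\htau',\si)=X_{\htau'}$ and $X_{\htau'}\geq Z'_\rho$ follows from (P\ref{lemff205}) and (P\ref{lemff201}) in the boundary case $\htau'=\rho$, and from (P\ref{lemff206}) when $\rho<\htau'$; if $\htau'=\si$ then $R(\htau',\si)=Z_{\htau'}$ and $Z_{\htau'}\geq Z'_\rho$ follows from the defining property of $\htau'$ when $\htau'=\rho$ and from (P\ref{lemff207}) together with (P\ref{lemff206}) when $\rho<\htau'$; finally if $\rho\leq\si<\htau'$ then $R(\htau',\si)=Y_\si$ and, since $\si\in[\rho,\htau')$, property (P\ref{lemff206}) gives $Y_\si\geq X_\si\wedge Y_\si\geq Z'_\rho$. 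Claim (B) is then obtained by the mirror-image case analysis, interchanging the two players together with the roles of $\htau'$ and $\tsi'$.

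The main obstacle is purely the bookkeeping of this case analysis: in each ordering of $\htau'$, $\si$ (respectively $\tau$, $\tsi'$) and $\rho$ one must identify the active branch of the piecewise payoffs $R$ and $R''$ and match it to the correct structural bound on $Z'_\rho$ from Lemmas~\ref{lemff200} and~\ref{lemff20}. The genuinely delicate points are the boundary cases $\htau'=\rho$ (respectively $\tsi'=\rho$), where the strict-inequality property (P\ref{lemff206}) is unavailable and one must instead fall back on the defining property of the stopping time together with (P\ref{lemff205}) and (P\ref{lemff201}); here the observation that $\rho<T$ forces $X_\rho\geq Y_\rho$ is what pins down the value of $Z'_\rho$ and closes the argument.
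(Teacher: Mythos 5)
Your proposal is correct and takes essentially the same approach as the paper's proof: reduce \eqref{eqff251} to the pointwise comparisons $R(\htau',\si)\geq R''(\htau',\si)$ and $R''(\tau,\tsi')\geq R(\tau,\tsi')$, prove the first by the same case analysis on the orderings of $\htau'$, $\si$ and $\rho$ (the second following by symmetry), and then chain with the Nash property of $(\htau',\tsi')$ in $\Game''$ and the equality $R(\htau',\tsi')=R''(\htau',\tsi')$ from Proposition \ref{lemff21}(iv)--(v). If anything, your write-up is marginally more careful than the paper's: in the sub-case $\rho=\htau'<\si$ you correctly invoke $X_\rho\geq Y_\rho$ on $\{\rho<T\}$, i.e.\ (P\ref{lemff205}), where the paper's text cites (P\ref{lemff203}) instead, and your case $\rho\leq\si<\htau'$ explicitly covers the boundary event $\{\rho=\si<\htau'\}$, which the paper's split ($\rho<\si<\htau'$) formally omits even though the same argument handles it.
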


\begin{proof}
To prove the upper inequality of \eqref{eqff251}, we will first establish the following inequality
\begin{gather*} 
R(\htau' , \si ) \geq R''(\htau' , \si ),\quad \forall\,\si\in\Tstrat_{[0,T]}.
\end{gather*}
On the event $\{\htau' \wedge \si < \rho\}$ and by definition of $R$ and $R''$ we have $R(\htau' , \si ) = R''(\htau' , \si )$. For $\{\rho \leq \htau' \wedge \si \}$, it suffices to show $R(\htau' , \si ) \geq R''(\htau' , \si ) = Z'_\rho$. There are three cases to consider.

\begin{itemize}
\item On the event $\{ \rho = \htau'\leq \si\}$, there are two sub-cases.
\begin{itemize}
\item If $\rho =\htau'=\si$ then, by (P\ref{lemff207}), we have $Z_{\rho} \geq X_{\rho} \wedge Y_{\rho}$. Hence by the definition of $Z'_\rho$ (see Definition \ref{assff14} \eqref{assff143}),
\[
R(\htau' , \si ) = Z_\rho \geq  Z_{\rho} \wedge (X_{\rho} \vee Y_{\rho}) = Z'_\rho.
\]
\item If $\rho =\htau' < \si$ then $\rho<T$. By (P\ref{lemff201}) and (P\ref{lemff203}),
\[
R(\htau' , \si ) = X_\rho = X_\rho \vee Y_\rho \geq Z'_\rho.
\]
\end{itemize}
\item On the event $\{ \rho < \htau' \leq \si\}$, by (P\ref{lemff206}), we have $X_{\htau'} \wedge Y_{\htau'} \geq Z'_\rho$. Again, there are two sub-cases.
\begin{itemize}
\item If $\rho < \htau' = \si$ then, by (P\ref{lemff207}), we obtain
\[
R(\htau' , \si ) = Z_{\htau'} \geq X_{\htau'} \wedge Y_{\htau'} \geq Z'_\rho.
\]
\item If $\rho < \htau' < \si$ then we have
\[
R(\htau' , \si ) = X_{\htau'} \geq X_{\htau'} \wedge Y_{\htau'} \geq Z'_\rho.
\]
\end{itemize}
\item On the event $\{ \rho < \si < \htau'\}$, by (P\ref{lemff206}), we have $X_{\si} \wedge Y_{\si} \geq Z'_\rho$. Consequently,
\[
R(\htau' , \si ) = Y_{\si} \geq X_{\si} \wedge Y_{\si} \geq Z'_\rho.
\]
\end{itemize}
In all cases, we have $R(\htau' , \si )\geq R''(\htau' , \si )$. Therefore, by parts \eqref{lemff214} and \eqref{lemff215}
 in Proposition \ref{lemff21}, we obtain
\begin{gather*}
\E\big(R(\htau' , \si )\big) \geq \E\big(R''(\htau' , \si )\big) \geq \E\big(R''(\htau' , \tsi' )\big) = \E\big(R(\htau' , \tsi' )\big),\quad \forall\,\si\in\Tstrat_{[0,\rho]}.
\end{gather*}
This establishes the upper inequality of \eqref{eqff251}. The lower inequality can be proven using similar arguments.
\end{proof}

%

\subsection{Non-Zero-Sum Dynkin Games}

As in Section \ref{sec3.13}, we consider the following general payoff functions
\begin{gather*} 
R^1(\tau , \si ) = \I_{\{ \tau < \si \}}\, X^1_{\tau }  +  \I_{\{ \si  < \tau \}}\,
Y^1_{\si }  +  \I_{\{ \si  = \tau \}}\, Z^1_{\tau },\\
R^2(\tau , \si ) = \I_{\{ \si < \tau \}}\, X^2_{\si }  +  \I_{\{ \tau  < \si \}}\,
Y^2_{\tau }  +  \I_{\{ \si  = \tau \}}\, Z^2_{\si }.
\end{gather*}
For convenience write $X=(X^1,X^2)$, $Y=(Y^1,Y^2)$, $Z=(Z^1,Z^2)$, $R=(R^1,R^2)$.

The following special case was studied by Hamad\`ene and Zhang \cite{hamadene2010continuous}
\begin{gather*} 
R^1(\tau , \si ) = \I_{\{ \tau \leq \si \}}\, X^1_{\tau }  +  \I_{\{ \si  < \tau \}}\, Y^1_{\si } ,\\
R^2(\tau , \si ) = \I_{\{ \si \leq \tau \}}\, X^2_{\si }  +  \I_{\{ \tau  < \si \}}\, Y^2_{\tau } .
\end{gather*}

\begin{theorem}\label{thmff11a}
Assume that the following conditions hold:
\begin{enumerate}[(i)]
\item $X^i \leq Y^i$ for $i=1,2$;
\item the processes $X^1$ and $X^2$ are left upper semi-continuous (only have positive jumps),
\item for any $\tau \in \STOP_{[0,T]}$, $ \P ( \{ X^1_t < Y^1_t \} \setminus  \{ X^2_t < Y^2_t \} )=0$.
\end{enumerate}
Then the Dynkin game associated with the payoff $R= (R^1,R^2)$ has a Nash equilibrium.
\end{theorem}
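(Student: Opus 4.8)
The plan is to exhibit a Nash equilibrium as a fixed point of the two players' best-response maps, each of which is an ordinary optimal stopping problem solvable by a Snell envelope. Regard the first player as controlling $\tau$ and maximising $\E_\P(R^1(\tau,\si)\,|\,\Filt_t)$, and the second as controlling $\si$ and maximising $\E_\P(R^2(\tau,\si)\,|\,\Filt_t)$. For a fixed strategy $\si$ of the second player, the first player's reward equals $X^1_\tau$ as long as $\tau\le\si$ and the frozen value $Y^1_\si$ once $\si<\tau$; hence his problem is the optimal stopping of the process $g^{1,\si}_t:=X^1_t\,\I_{\{t\le\si\}}+Y^1_\si\,\I_{\{\si<t\}}$. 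Since $X^1$ is left upper semi-continuous and $Y^1\ge X^1$ by hypotheses (i)--(ii), this reward is left upper semi-continuous, so by the optimal stopping theory underlying Theorem \ref{thmev11} its Snell envelope is attained at a genuine (not merely $\ep$-optimal) first hitting time $\Gamma^1(\si)$. The second player's best response $\Gamma^2(\tau)$ is defined symmetrically via the left upper semi-continuity of $X^2$. By construction, any pair $(\tau^*,\si^*)$ with $\tau^*=\Gamma^1(\si^*)$ and $\si^*=\Gamma^2(\tau^*)$ satisfies both Nash inequalities, so it suffices to produce such a fixed point.

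Next I would record the order structure forced by the hypotheses. Condition (i) makes this a war of attrition: because $X^i\le Y^i$, neither player is ever worse off when the opponent stops first, which translates into monotonicity of $\Gamma^1,\Gamma^2$ with respect to the almost-sure order on $\STOP_{[0,T]}$. Condition (iii), namely $\{X^1<Y^1\}\subseteq\{X^2<Y^2\}$ up to null sets at every stopping time (equivalently $\{X^2=Y^2\}\subseteq\{X^1=Y^1\}$), says that the second player's continuation region contains the first player's: whenever the first player strictly prefers to wait so does the second, and whenever the second is indifferent between the two outcomes the first is indifferent too. I would use this nesting both to restrict attention to ordered pairs $\tau^*\le\si^*$ and, crucially, to guarantee that on the diagonal event $\{\tau^*=\si^*\}$ both players are simultaneously indifferent, so that the tie-breaking in the payoff (on $\{\tau=\si\}$ each player collects his own $X^i$) does not generate a profitable deviation towards waiting for the larger $Y^i$.

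I would then realise the fixed point by a monotone scheme. On the complete lattice $(\STOP_{[0,T]},\le)$ under the essential order, where countable infima and suprema of stopping times are again stopping times by right-continuity of $\FF$, the joint best-response map $F(\tau,\si):=(\Gamma^1(\si),\Gamma^2(\tau))$ is monotone, and a Tarski-type lattice fixed-point theorem yields $(\tau^*,\si^*)=F(\tau^*,\si^*)$; alternatively one runs the Picard iteration $\si^{(0)}:=T$, $\tau^{(n)}:=\Gamma^1(\si^{(n-1)})$, $\si^{(n)}:=\Gamma^2(\tau^{(n)})$ and shows the sequences are monotone and bounded, hence convergent. The comparison Lemma \ref{lemev51}, together with the left upper semi-continuity of $X^1$ and $X^2$, is what lets me pass to the limit inside the Snell-envelope optimality and conclude that the limits are again exact hitting times with $\tau^*=\Gamma^1(\si^*)$ and $\si^*=\Gamma^2(\tau^*)$. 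The auxiliary supermartingale $Q^\si$ and the optional-sampling estimates of Proposition \ref{propev20} supply the inequalities comparing a candidate deviation with the equilibrium continuation value.

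The hard part is the existence and honesty of the fixed point, and it rests on the two structural hypotheses in a delicate way. First, the optimal stopping times must be attained in the limit rather than only $\ep$-approximately; this is exactly the content of hypothesis (ii), since without left upper semi-continuity Theorem \ref{thmev11} and Proposition \ref{propep05} furnish only $\ep$-optimal strategies, and a monotone limit of $\ep$-best responses need not satisfy the exact Nash inequalities. Second, one must rule out a limit in which the two prescribed rules disagree on the simultaneous-stopping event, where the payoff jumps discontinuously from $X^i$ to $Y^i$; controlling this is precisely the role of condition (iii), which forces both players to be indifferent there so that neither gains by postponing his stop to seize the larger $Y^i$. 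Verifying that the second player has no incentive to stop strictly before $\tau^*$ merely to capture a large $X^2$ --- the genuine obstruction to equilibrium in an unrestricted war of attrition --- is the crux, and it is closed by combining the nesting $\{X^1<Y^1\}\subseteq\{X^2<Y^2\}$ with the continuation value furnished by $Q^{\si^*}$.
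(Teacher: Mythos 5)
You should know at the outset that the paper contains no proof of Theorem \ref{thmff11a}: the statement sits inside the \texttt{skipold} block that is excised from the compiled document, is given without any argument, and is a quotation of the result of Hamad\`ene and Zhang \cite{hamadene2010continuous}, so your attempt can only be measured against their proof. In outline you are close to it in spirit --- each player's best response is an ordinary optimal stopping problem solved by a Snell envelope, hypothesis (ii) is what upgrades $\ep$-optimal to attained optimal stopping times, and hypothesis (iii) is what disciplines the simultaneous-stopping event --- but the route you propose to the fixed point breaks at its central step.

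The genuine gap is the monotonicity claim on which everything downstream (Tarski, or the Picard iteration from $\si^{(0)}=T$) rests. The best-response map $\Gamma^1$ is in general neither isotone nor antitone in the opponent's stopping time under the a.s.\ order, even under (i)--(iii). Enlarging $\si$ replaces the frozen continuation payoff $Y^1_{\si}$ on $(\si,T]$ by the running reward $X^1$ (frozen only later, at $Y^1_{\si'}$), and these are not pathwise comparable: if $X^1$ is decreasing and $Y^1$ is large, the best response to a larger $\si$ moves \emph{earlier} (preempt and take $X^1$ now), whereas if $X^1$ is increasing with $Y^1=X^1+\ep$ it moves \emph{later}; both configurations are compatible with (i)--(iii), so $F(\tau,\si)=(\Gamma^1(\si),\Gamma^2(\tau))$ is not order-preserving in any product order, and in addition $\Gamma^i$ is a correspondence (optimal times are not unique), so Tarski would require a monotone selection that you never exhibit. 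Relatedly, the step you yourself flag as ``the crux'' --- that the limit of best responses to $\si^{(n-1)}$ is an exact best response to $\si^*$ --- is asserted, not proved, and this is precisely where the real work lies. Hamad\`ene and Zhang avoid your difficulty altogether: their scheme is not a naive best-response iteration but a forward recursion of stopping times $\lambda_0\leq\lambda_1\leq\cdots$, alternating between the two players, in which $\lambda_{n+1}$ is a hitting time after $\lambda_n$ defined through the Snell envelope of a reward truncated at $\lambda_n$; the sequence is nondecreasing \emph{by construction}, the positive-jump hypothesis (ii) transfers optimality to the increasing limit, and the nesting hypothesis (iii) forces the recursion to stabilise so that the limiting pair satisfies both exact Nash inequalities. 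A secondary flaw: the process $Q^{\si}$ of \eqref{eqev53} and Proposition \ref{propev20} are built around the zero-sum value $V^*$ and the minimax identity of the standard game; in the non-zero-sum setting each player carries his own value process and no minimax identity holds, so invoking Proposition \ref{propev20} and $Q^{\si^*}$ to close your final deviation argument is not licensed without reproving those estimates player by player.
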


Define the following regions in $\R^2$:
\begin{gather*}
\mathbb{X}^0_t=\big\{(x,y)\in\R^2 : x\leq X^1_t, y\leq X^2_t \big\},\qquad \mathbb{Y}^0_t=\big\{(x,y)\in\R^2 : x\leq Y^1_t, y\leq Y^2_t \big\},\\
\mathbb{X}^1_t=\big\{(x,y)\in\R^2 : x<X^1_t, y>X^2_t \big\},\qquad \mathbb{Y}^2_t=\big\{(x,y)\in\R^2 : x<Y^1_t, y>Y^2_t \big\},\\
\mathbb{X}^2_t=\big\{(x,y)\in\R^2 : x>X^1_t, y<X^2_t \big\},\qquad \mathbb{Y}^1_t=\big\{(x,y)\in\R^2 : x>Y^1_t, y<Y^2_t \big\},\\
\mathbb{X}^3_t=\big\{(x,y)\in\R^2 : x\geq X^1_t, y\geq X^2_t \big\},\qquad \mathbb{Y}^3_t=\big\{(x,y)\in\R^2 : x\geq Y^1_t, y\geq Y^2_t \big\}.
\end{gather*}

Introduce the following stopping times:

\begin{definition} \label{assfj04}
In the following definitions, we adopt the convention of setting $\inf \emptyset = T$.
\begin{enumerate}[(i)]
\item\label{assfj041} Denote by $\rho$ the first instance of $X^i$ increasing above $Y^i$ for $i=1,2$, that is
\begin{gather*}
\rho:=\inf \big\{ t\in [0, T]\,|\, X^1_t \geq Y^1_t \big\} \wedge \inf \big\{ t\in [0, T]\,|\, X^2_t \geq Y^2_t \big\}.
\end{gather*}
\item\label{assfj042} Given $\tau,\si \in \STOP_{[0,T]}$, define
\begin{align*}
\htau &:= \inf \big\{ t\in [\tau, T]\,|\, Z_t \notin \mathbb{Y}^1 \big\},\\
\tsi &:= \inf \big\{ t\in [\si, T]\,|\, Z_t \notin \mathbb{Y}^2 \big\}.
\end{align*}
\end{enumerate}
\end{definition}

\begin{conjecture}\label{propfj25}
The following conditions are sufficient for the existence of a Nash equilibrium.
\begin{enumerate}[(i)]
\item\label{assfj051} $Z_T\in\mathbb{Y}^0_T \cup \mathbb{Y}^3_T$;
\item\label{assfj052} On $[0,\rho]$, $X^1$ and $X^2$ are left upper semi-continuous (only have positive jumps); furthermore, at $\rho$, $\limsup_{t \uparrow \rho} X^i_t \leq Y^i_\rho$ for $i=1,2$;
\item\label{assfj054} For any $\tau \in \STOP_{[0,\rho]}$, on the event $\{\tau<\htau\}$, $X^1_\tau \leq Y^1_\tau$ and $X^1$ is non-decreasing on $[\tau,\htau]$;
For any $\si \in \STOP_{[0,\rho]}$, on the event $\{\si<\tsi\}$, $X^1_\si \leq Y^1_\si$ and $X^1$ is non-decreasing on $[\si,\tsi]$.
\item\label{assfj053} On the event $\{X_\rho\in\mathbb{Y}^1_\rho\}$, there exists $\bar \rho>\rho$ such that $X^2_t \geq X^2_\rho$ for all $t\in[\rho,\bar \rho]$;
On the event $\{X_\rho\in\mathbb{Y}^2_\rho\}$, there exists $\tilde \rho>\rho$ such that $X^1_t \geq X^1_\rho$ for all $t\in[\rho,\tilde \rho]$;
\end{enumerate}
\end{conjecture}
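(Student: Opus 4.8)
The plan is to adapt the reduction developed for the zero-sum general game in Propositions \ref{lemff21} and \ref{propff25}, replacing the scalar clipping of Definition \ref{assff14}\eqref{assff143} by a two-dimensional projection and invoking the Hamad\`ene--Zhang existence result, Theorem \ref{thmff11a}, in place of the zero-sum existence theorem. The organising observation is that, by the definition of $\rho$ in Definition \ref{assfj04}\eqref{assfj041}, \emph{both} strict inequalities $X^1_t<Y^1_t$ and $X^2_t<Y^2_t$ hold on $\{t<\rho\}$. Consequently the event $\{X^1_t<Y^1_t\}\setminus\{X^2_t<Y^2_t\}$ figuring in hypothesis (iii) of Theorem \ref{thmff11a} is empty on $[0,\rho)$, so that the game restricted to $[0,\rho)$ is already of standard Hamad\`ene--Zhang type; the entire difficulty is thereby concentrated at and beyond $\rho$, which I would dispose of by freezing the payoff there.

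First I would introduce an $\Filt_\rho$-measurable terminal payoff $Z'_\rho$, the two-dimensional counterpart of the scalar variable of Definition \ref{assff14}\eqref{assff143}, obtained by projecting $Z_\rho=(Z^1_\rho,Z^2_\rho)$ onto $\mathbb{Y}^0_\rho\cup\mathbb{Y}^3_\rho$; hypothesis (i), namely $Z_T\in\mathbb{Y}^0_T\cup\mathbb{Y}^3_T$, guarantees that at $t=T$ this projection returns $Z_T$ unchanged. I would then define the vector-valued analogues $\Game'$ and $\Game''$ of the modified payoffs \eqref{eqff20}--\eqref{eqff201}, each frozen to $Z'_\rho$ on $\{\rho\le\tau\wedge\si\}$, the sole difference being that $\Game'$ awards the stop-first payoff $(X^1,X^2)$ on a simultaneous stop before $\rho$ while $\Game''$ awards $(Z^1,Z^2)$. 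By the analogue of Remark \ref{remff162}, the analysis of $\Game'$ reduces to stopping times in $\STOP_{[0,\rho]}$. Using the strict ordering $X^i<Y^i$ on $[0,\rho)$ together with hypothesis (ii) --- left upper semicontinuity of $X^1,X^2$ and $\limsup_{t\uparrow\rho}X^i_t\le Y^i_\rho$ --- I would verify that $\Game'$ satisfies all three hypotheses of Theorem \ref{thmff11a}: the ordering $X^i\le Y^i$ and the empty set-difference hold on $[0,\rho)$, while the boundary limit at $\rho$ and the frozen payoff handle the endpoint. Theorem \ref{thmff11a} then supplies a Nash equilibrium $(\tau',\si')$ of $\Game'$ with $\tau',\si'\in\STOP_{[0,\rho]}$.

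Next I would pass to the candidate $(\htau',\tsi')$ for the original game, where $\htau'$ and $\tsi'$ are built from $\tau',\si'$ through Definition \ref{assfj04}\eqref{assfj042}: $\htau'$ advances $\tau'$ to the first time $Z$ leaves $\mathbb{Y}^1$, and $\tsi'$ advances $\si'$ to the first time $Z$ leaves $\mathbb{Y}^2$. The verification would run through the same three transfers as Proposition \ref{lemff21}. (a) Show $(\htau',\tsi')$ is still a Nash equilibrium of $\Game'$, by proving that advancing $\tau'$ to $\htau'$ weakly raises player 1's $\Game'$-payoff and, symmetrically, advancing $\si'$ to $\tsi'$ weakly raises player 2's; this is exactly where the monotonicity of $X^1$ on $[\tau,\htau]$, and of $X^2$ on $[\si,\tsi]$, in hypothesis (iii) is used. (b) Rule out a simultaneous stop strictly before $\rho$, that is $\P(\{\htau'=\tsi'<\rho\})=0$, by the deviation argument of Proposition \ref{lemff21}\eqref{lemff213} exploiting $Y^i>X^i$ on $\{t<\rho\}$; since $\Game'$ and $\Game''$ differ only on simultaneous stops before $\rho$, this upgrades the equilibrium from $\Game'$ to $\Game''$. (c) Establish the pointwise identity $R(\htau',\tsi')=R''(\htau',\tsi')$ together with the domination $R^1(\htau',\si)\ge (R'')^1(\htau',\si)$ for every $\si$ and its player-2 mirror, whence the equilibrium inequalities for $\Game''$ transfer to $\Game$, exactly as in Proposition \ref{propff25}.

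I expect transfer (c) to be the main obstacle. Unlike the zero-sum setting there is no value and no minimax--maximin sandwich to lean on, so each player's incentive constraint must be checked directly on the post-$\rho$ events, and the two-dimensional clipping $Z'_\rho$ must be arranged so that \emph{neither} player strictly prefers to deviate into the frozen continuation. This is precisely where hypothesis (iv) enters: on $\{X_\rho\in\mathbb{Y}^1_\rho\}$ the existence of $\bar\rho>\rho$ with $X^2_t\ge X^2_\rho$ on $[\rho,\bar\rho]$ prevents player 2 from gaining through a post-$\rho$ stop, and symmetrically on $\{X_\rho\in\mathbb{Y}^2_\rho\}$ the level $\tilde\rho$ protects player 1. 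The delicate part is pinning down $Z'_\rho$ on the boundaries between $\mathbb{Y}^0\cup\mathbb{Y}^3$ and the conflict regions $\mathbb{Y}^1,\mathbb{Y}^2$ so that both domination inequalities in (c) hold at once. Finally, I read hypothesis (iii), which as written mentions only $X^1$ in both clauses, as asserting also the symmetric monotonicity of $X^2$ on $[\si,\tsi]$; throughout I would exploit the game's $\tau\leftrightarrow\si$, $(X^1,Y^1,\mathbb{Y}^1)\leftrightarrow(X^2,Y^2,\mathbb{Y}^2)$ symmetry to deduce each player-2 statement from its player-1 counterpart.
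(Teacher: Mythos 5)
First, be aware of what you are comparing against: the paper contains \emph{no} proof of this statement. It is explicitly labelled a conjecture, and it sits --- together with Theorem \ref{thmff11a} and the zero-sum transfer results (Propositions \ref{lemff21} and \ref{propff25}) that your plan leans on --- inside a block the author has excised from compilation (the \texttt{skipold} macro), so even the author did not carry this through. Your attempt must therefore stand entirely on its own, and as written it is a programme with two unfilled holes rather than a proof. Your charitable reading of hypothesis (iii) (taking the second clause to concern $X^2$) is reasonable, and your observation that condition (iii) of Theorem \ref{thmff11a} is vacuous on $[0,\rho)$ is correct; the trouble lies downstream.

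The more serious hole is your step (a), which you assert goes through exactly as in the zero-sum case. It does not. In the zero-sum argument, the passage from the equilibrium $(\tau',\si')$ to the advanced pair $(\htau',\tsi')$ is closed by the four-term cyclic chain of expectations \eqref{eqff212}: the pointwise improvements from advancing each player are interleaved with \emph{both} equilibrium inequalities for the \emph{same} scalar functional $R'$, forcing all four expectations to coincide, and only then does optimality of $(\htau',\tsi')$ follow. Your monotonicity hypotheses give at best that $(R')^1(\htau,\si_t)\geq (R')^1(\tau,\si_t)$ for a \emph{fixed} opponent strategy, i.e.\ that $\htau'$ is a best response to the original $\si'$, and symmetrically that $\tsi'$ is a best response to the original $\tau'$. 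Nothing in hypotheses (iii) or (iv) controls how player 2's advance from $\si'$ to $\tsi'$ alters player 1's payoff, so you cannot conclude that $\htau'$ is a best response to the advanced $\tsi'$; in a non-zero-sum game the opponent's advance can create profitable deviations, and the sandwich ruling this out is precisely where zero-sumness is consumed in Proposition \ref{lemff21}(ii). Separately, your construction rests on a random variable $Z'_\rho$ ``obtained by projecting $Z_\rho$ onto $\mathbb{Y}^0_\rho\cup\mathbb{Y}^3_\rho$'', but that set is a non-convex union and no projection is specified; in the scalar case the clipping \eqref{eqff18} is canonical and its order property $X_\rho\wedge Y_\rho\leq Z'_\rho\leq X_\rho\vee Y_\rho$ drives every inequality in Propositions \ref{lemff21} and \ref{propff25}. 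You concede that you do not know how to pin down the two-dimensional analogue so that both players' domination inequalities in your step (c) hold simultaneously; an acknowledged unresolved construction at the heart of the argument is a gap, not a detail. Until step (a) is repaired by a genuinely non-zero-sum argument and $Z'_\rho$ is exhibited with the required properties, the statement remains what the author left it: a conjecture.
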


}

\bibliographystyle{acm}
\bibliography{DynkinGamesRefs}

\end{document}